\newtheorem{theorem}{Theorem}[section]
\newtheorem{lemma}{Lemma}[section]
\newtheorem{remark}{Remark}[section]
\numberwithin{equation}{section}
\title{Strong approximation of the time-fractional Cahn--Hilliard equation driven by a fractionally integrated additive noise}
\author{Mariam Al-Maskari\footnotemark[2]
\and Samir Karaa\footnotemark[2]}
\newcommand{\cop}{{^C}\partial_t^{\alpha}}
\newcommand{\op}{\partial_t^{\alpha}}
\newcommand{\dop}{\partial_\tau^{\alpha}}
\begin{document}
\date{}
\maketitle

\maketitle

\renewcommand{\thefootnote}{\fnsymbol{footnote}}
\footnotetext[2]{Department of Mathematics, Sultan Qaboos University,
 Al-Khod 123, Muscat, Oman (m.almaskari@squ.edu.om, skaraa@squ.edu.om.
 This research is supported by Sultan Qaboos University grant IG/SCI/MATH/24/02}
%
  
\renewcommand{\thefootnote}{\arabic{footnote}}

\begin{abstract}
In this paper, we consider the numerical approximation of
 a time-fractional stochastic Cahn--Hilliard equation driven by an additive fractionally integrated Gaussian noise. The model involves a Caputo fractional derivative in time of order $\alpha\in(0,1)$  and a fractional time-integral noise of order $\gamma\in[0,1]$. The numerical scheme approximates  the model  by a piecewise linear finite element method in space and  a convolution quadrature in time (for both time-fractional operators), along with the $L^2$-projection for the noise.
 We carefully investigate the spatially semidiscrete and fully discrete schemes, and obtain strong convergence rates by using clever energy arguments. The temporal H\"older continuity property of the solution played a key role in the error analysis.  Unlike the stochastic
Allen--Cahn equation, the presence of the unbounded elliptic operator in front of the cubic nonlinearity
in the underlying model adds complexity and challenges to the error analysis.
To overcome these difficulties, several new techniques and error estimates are developed.  The study concludes with numerical examples that validate the theoretical findings.
\end{abstract}

{\small {\bf Key words.}
\small  stochastic Cahn--Hilliard equation,  time-fractional model, finite element method, convolution quadrature, error estimate}

{\small  {\bf AMS subject classifications.}  65M60, 65M12, 65M15}

\section{Introduction} 
We consider the  time-fractional stochastic Cahn--Hilliard equation perturbed by 
a fractionally integrated noise,
\begin{subequations}\label{main}
\begin{alignat}{2}\label{main1}
\cop u -\Delta(-\Delta u + \phi(u))&=\partial_t^{-\gamma}\dot W(t) &&\quad\mbox{ in }\mathcal{D}\times (0,T ],
\\  \label{main2}
u(0)&=u_0 && \quad\mbox{ in }\Omega,
\\   \label{main3}
\frac{\partial u}{\partial \nu}=0, \; \frac{\partial }{\partial \nu}(-\Delta u+\phi(u))&=0 &&\quad\mbox{ on }\partial\mathcal{D}\times (0,T],
\end{alignat}
\end{subequations}
where $\mathcal{D}$ is a bounded convex domain in $\mathbb{R}^d$,  $d=1,2,3,$  with a polygonal boundary $\partial \mathcal{D}$,  $T >0$ is a fixed time,  and $\partial/\partial \nu$ denotes the outward normal derivative. The operator $\partial_t^{-\gamma}$ is the Riemann--Liouville fractional
integral of order $\gamma$ defined by
\begin{equation*} \label{Ba}
\partial_t^{-\gamma}\varphi(t):=\frac{1}{{\Gamma(\gamma)}}\int_0^t (t-s)^{\gamma-1}\varphi(s)\,ds
\end{equation*}
and $\cop:=\partial_t^{\alpha-1}\partial_t$ is the Caputo fractional derivative in time of order $\alpha$ ($0<\alpha<1$). 
In \eqref{main1}, the nonlinear function  $\phi(u):=u^3-u$, and the noise $\{W(t)\}_{t\geq 0} $ is a $Q$-Wiener process in $H:=L^2(\mathcal{D})$ 
 with respect to a filtered probability space $(\Omega, \mathcal{F}, \mathcal{P}, \{\mathcal{F}_t\}_{t\geq 0})$, and the initial data $u_0$ is an $\mathcal{F}_0$-measurable 
 random variable with values in $H$.

The non-randomized form of problem \eqref{main} is designed to depict phase separation and coarsening phenomena in a molten alloy, as well as spinodal decomposition for a binary mixture, as detailed in \cite{6,8,7}. Introducing noise to the physical model is a natural approach, serving either as an external random perturbation or as a means to address the lack of knowledge about certain involved physical parameters. For instance, in \cite{4,12}, and the references cited therein, the authors assert that only the stochastic version is capable of accurately describing the entire decomposition process in a binary alloy.

For the  deterministic Cahn--Hilliard equation  
\begin{equation}\label{eq1}
\cop u -\Delta(-\Delta u + \phi(u))=0,
\end{equation}
several numerical studies are available.
For the classical case $(\alpha=1)$, numerical studies with optimal error estimates  are available in the literature. In \cite{elliot-1989},  Elliott {\it et al.} derived optimal order bounds using a second-order splitting method assuming high regularity assumptions on initial data. In \cite{EL-1992}, Elliott and Larsson payed much attention to the solution regularity, and obtained error bounds for solutions with smooth and nonsmooth initial data, which are optimal with respect to the polynomial degrees and the regularity of initial data. 

The time-fractional model \eqref{eq1}   with $\alpha\in (0,1)$  
was investigated  by a few authors. 
 In \cite{Liu-2018}, Liu {\it et al.} proposed a numerical scheme using  an $L1$-approximation in time and a Fourier spectral method in space. In \cite{Tang-2019},  Tang  {\it et al.} showed that the  stabilized $L1$-scheme  admits an energy dissipation law of an integral type.
 In  the recent work \cite{ZZW-2020}, Zhang  {\it et al.} investigated a numerical scheme based on an  $L1^+$ formula in time and a second-order convex-splitting technique that treats the nonlinear term semi-implicitly.
 In all these studies, the existence and regularity of the exact solution were not taken into consideration.
For  Cahn--Hilliard equations with fractional order operators in space, we refer to \cite{MM-2017} and references therein.  Recently, in \cite{MK-2021}, Al-Maskari and Karaa established optimal error estimates   for the piecewise linear Galerkin finite element (FE) solution  with respect to the regularity of initial data. 
More precisely, they proved convergence rates of order $O(t^{-(2-\nu)/4}h^2)$ ($h$ denoting the maximum diameter of the spatial mesh elements) for $t \in (0, T]$  and initial data $u_0\in \dot H^\nu (\Omega),$  $\nu\in[1,2]$. Fully discrete schemes have also been analyzed and related convergence rates  were obtained for both smooth and nonsmooth initial data.

The numerical approximation of the classical stochastic  equation
\begin{equation}\label{eq2}
u_t -\Delta(-\Delta u + \phi(u))=\dot W(t),
\end{equation}
i.e., with $\alpha=1,\; \gamma=0$, was widely discussed in the literature; see,  for the instance,  \cite{9,10,11,12,13,14,15,16,17}. Notably, in \cite{9,10,11},  proofs of the strong convergence of mixed finite element methods for \eqref{eq2} are presented, though without a specified rate. The analysis in \cite{9,10,11} involves establishing a priori moment bounds with large exponents and in higher-order norms, and employing energy arguments.
In \cite{12}, the recovery of strong convergence rates for mixed finite element methods applied to the stochastic Cahn--Hilliard equation was achieved by utilizing a priori strong moment bounds of the numerical approximations, based on the existing results in \cite{9, 10, 11}. 
 In \cite{18}, an analysis of the strong convergence rate is presented for an implicit fully discretization with an unbounded noise diffusion in dimension one. Strong convergence rates for a fully discrete mixed FE method with a gradient-type multiplicative noise, where the noise process is a real-valued Wiener process, are derived in \cite{15}.
In \cite{19}, a tamed exponential Euler method in time combined with a spectral Galerkin method is considered. The resulting explicit scheme was found to be computationally efficient compared to other implicit schemes. Another explicit time-stepping scheme is investigated in \cite{20}, with strong convergence rates being obtained.

The numerical investigation of \eqref{eq2} with the time-derivative $u_t$ being substituted with the Caputo fractional derivative $\cop u$  is notably limited. To the best of our knowledge, this time-fractional model has only been considered in \cite{21}, where an analysis was conducted using a finite element discretization in space, coupled with a time approximation method that approximates the Mittag-Leffler function. Error estimates for both semidiscrete and fully discrete schemes were presented, without accompanying numerical experiments. 

In this paper, we investigate the strong convergence of the numerical solution of the model \eqref{main}. 
We consider a standard Galerkin linear FE method in space. We approximate the time-fractional derivative by a convolution quadrature and the noise by an $L^2$-projection. 
Given that the FE method incorporates the discrete operator $(A_h)^2$ instead of $(A^2)_h$, the analysis faces a significant challenge, as highlighted in \cite{elliot-1989}.
A delicate analysis is presented to effectively  overcome these difficulties and simplify the error analysis.
For the fully discrete problem, we conveniently exploit the properties of the associated discrete time evolution operators, as described in \cite{MK-2019,Karaa-2020}, which  simplifies the analysis of nonlinear schemes and allows one to achieve pointwise-in-time optimal error estimates.
New technical results (Lemmas \ref{error_integral} and \ref{sum_e_2}) are also found to play a key role in the analysis.

Throughout the paper,  $\alpha\in (0,1)$ and $\gamma\in [0,1]$. Our results require that $\|A^{(\beta-2)/2}Q^{1/2}\|_{HS}<\infty$ for $\beta\in[1,3]$, where $\|\cdot\|_{HS}$ denotes the Hilbert-Schmidt norm. The initial data  $u_0\in L^{16p}(\Omega;H_0^2(\mathcal{D}))$ for some $p\geq1$. 
Then, under  assumptions on the regularity of exact solution $u$, we prove a
 H\"older continuity property of $u$ in time  which turns out to be essential in proving convergence of numerical schemes. 
 
This work is organized as follows: In Section 2, we introduce the necessary notations, 
define the Winner process and derive preliminary estimates for the stochastic convolution. In Section 3, we recall some properties of the nonlinear source $\phi(u)$ and discuss the regularity in time of the exact solution $u$. The semidiscrete FE scheme is presented in Section 4,  and error estimates are derived.  Section 5 is devoted to the analysis of the fully discrete scheme  using a convolution quadrature in time.   Finally, Section 5 presents some numerical results to validate our theoretical estimates.

\section{Preliminaries} \label{sec:notation}
In this section, we introduce the  main notations and define the operators and functional spaces that will be used afterwards.
\subsection{Norms and operators}
Let $H$ be equipped with the usual inner product $(\cdot,\cdot)$ and norm $\|\cdot\|$,  and let $H^s:=H^s(\mathcal{D})$ denote the standard Sobolev space. Define  $\dot H:=\{v\in H,(v,1)=0\}$ and denote by $P:L^2\to \dot H$ the orthogonal projection given by 
$Pv=v-|{ \cal D}|^{-1}\int_{\cal D}v\,dx$.  Let $A=-\Delta$ be the negative Neumann Laplacian operator  with domain 
$$
{D}(A)=\left\{v\in H^2\cap \dot H: \frac{\partial v}{\partial \nu}=0 \mbox{ on } \partial \mathcal{D}\right\}.
$$
Then $A$ is  selfadjoint, positive definite, unbounded, linear on $\dot H$ with compact inverse. When extended to $H$ as $Av:=APv$, the linear operator $A$ has an orthonormal basis $\{\varphi_j\}_{j=0}^\infty$ of $H$ with corresponding eigenvalues 
$\{\lambda_j\}_{j=0}^\infty$ such that 
$$
0=\lambda_0<\lambda_1\leq \lambda_2\leq \cdots \leq\lambda_j \leq \cdots, \quad  \lambda_j \sim j^{\frac{2}{d}}\to \infty.
$$
Note that the first eigenfunction is a constant, i.e., $\varphi_0=|{\cal D}|^{\frac{1}{2}}$ and  $\{\varphi_j\}_{j=1}^\infty$ is an orthonormal basis of $\dot H$. By spectral method, we define the spaces $\dot H^s=D(A^{s/2})$, for $s \geq 0$, with norm  $\|v\|_{\dot H^s}=\|A^{s/2}v\|=\left(\sum_{j=1}^\infty \lambda_j^s(v,\varphi_j)^2\right)^{1/2}.$ For $s<0$, $ \dot H^s$ is the dual space of $ \dot H^{-s}$.   Note that for integer $s\geq 0$, $\dot H^s$ is a subspace of $H^s\cap \dot H$  characterized by some boundary conditions,  and that $\|\cdot\|_{\dot H^s}$ and $\|\cdot\|_{H^s}$ are equivalent on $\dot H^s$, see 
\cite{Mclean-2010} and \cite[Chapter 2]{Miranville-2019} for more details.
We have, for instance,  $\dot H^0 = \dot H$, $\dot H^1 = H^1\cap \dot H$ and $\dot H^2 = H^2\cap \dot H$. The norm $\|v\|_{\dot H^1}=\|A^{1/2}v\|=\|\nabla v\|$ is equivalent to $\|v\|_{H^1}$ on $\dot H^1$, and $\|v\|_{\dot H^2}=\|A v\|$ is equivalent to  $\|v\|_{H^2}$  on $\dot H^2$. Then, by interpolation, we have  in particular  $\|v\|_{H^s}\leq C\|v\|_{\dot H^s} \forall v\in \dot H^s$, $s\geq 0$.


\subsection{The Wiener process}
Let $L(H)$ be the space of bounded linear operators from $H$ into $H$. Let $Q\in L(H)$ be a selfadjoint, positive semidefinite operator.
We assume that the noise $W(t)$ is an $H$-valued $Q$-Wiener process given by an orthogonal expansion 
\begin{equation}\label{noise}
W(t)=\sum_{j=1}^\infty \gamma_j^{1/2}\beta_j(t)e_j,
\end{equation}
where $\{\gamma_j\}_{j=1}^\infty$ and $\{e_j\}_{j=1}^\infty$ are the eigenvalues and  an orthonormal basis of corresponding eigenfunctions of $Q$. Moreover, $\beta_j(t)$ are independent and identically distributed (i.i.d.) real-valued Brownian motions.
Thus, the regularity of $W(t)$ is observed from the decay rate of $\gamma_j\to0$; the faster the decay the smoother the noise.
For the mild solution of \eqref{main} to preserve mass, we assume that the average 
$(W(t),1)=0$ for all $t\geq 0$, see \cite{10,MK-2019}. That is, $W(t)$ is $\dot H$-valued, and thus, the covariance operator $Q$ belongs to $L(\dot H)$.

Now, denote $\mathcal{L}_2^0 $  the space of Hilbert-Schmidt operators $\psi$ from $Q^{1/2}(H)$ to $H$ with induced norm 
$$\|\psi\|_{\mathcal{L}_2^0 }=\left(\sum_{j=1}^\infty\|\psi Q^{1/2}e_j\|^2 \right)^{1/2}.$$  
Also, let $L^p(\Omega; \dot H^s)$ denote the space of $\dot H^s$-valued $p$-times integrable random variables with norm 
$$
\|v\|_{L^p(\Omega;\dot H^s)}=(\mathbb{E}\|v\|_{\dot H^s}^p)^{1/p}=\left(\int_\Omega \|v(\omega)\|_{\dot H^s}^pd\mathbb{P}(\omega)\right)^{1/p},
$$ 
 where $\mathbb{E} $ denotes the expected value. Finally, we recall Burkholder's inequality \cite[Lemma 7.2]{DZ-1992}, which will be used frequently hereafter. Let $\psi: [0,T]\times \Omega\rightarrow \mathcal{L}_2^0 $ be any predictable stochastic process that satisfies $\|\psi\|_{L^p(\Omega;L^2(0,T;\mathcal{L}_2^0 ))}<\infty $ for some $p\geq 2$, then we have 
 \begin{equation}\label{burk}
 \left\|\int_0^T \psi(s)dW(s)\right\|_{L^p(\Omega;H )}\leq C_p \|\psi\|_{L^p(\Omega;L^2(0,T;\mathcal{L}_2^0))}. 
 \end{equation}

\subsection{Representation of the Solution}

Formally, we can express problem \eqref{main} as follows  
\begin{equation}\label{2.5}
\cop u+A^2u+AP\phi(u)=\partial_t^{-\gamma}\dot W(t),\quad t>0,\quad u(0)=u_0\in \dot H.
\end{equation}
Given that  $A$ is  selfadjoint and positive definite on $\dot H$, for any $0<\theta < \pi$, the inequality 
\begin{equation}\label{res3}
\|A^\mu (z^\alpha I+A^2)^{-1}v\|\leq M |z|^{-\alpha(1-\mu/2)} \|v\| \quad \forall z\in \Sigma_\theta,\quad  \quad\ v\in \dot H, \mu\in[0,2]
\end{equation}
holds, where $M=M_\theta$  and $z\in \Sigma_{\theta}:=\{\,z\neq 0,\, |\arg z|< \theta\}$.  
Applying Duhamel's principle, the \textit{mild} solution to problem \eqref{main} is represented as 
\begin{equation}\label{form-1}
u(t)=E_{1-\alpha}(t)u_0-\int_0^t E_0(t-s)AP\phi(u(s))ds+\int_0^tE_{\gamma}(t-s)dW(s),\quad t>0.
\end{equation}
 Here, the operator $E_m(t)$, for $m\geq 0$, is defined by
\begin{equation}\label{ops}
 E_m(t) = \frac{1}{2\pi i}\int_{\Gamma_{\theta,\delta}}e^{zt} z^{-m}(z^\alpha I+A^2)^{-1} \,dz,
\end{equation}
where the contour $\Gamma_{\theta,\delta}=\{\rho e^{\pm i\theta}:\rho\geq \delta\}\cup\{\delta e^{i\psi}: |\psi|\leq \theta\}$, with $\theta\in (\pi/2,\pi)$  and $\delta> 0$,  
is oriented with an increasing imaginary part.
The smoothness of the  operator $E_m$ is provided  in the following lemma, the proof is outlined in \cite{MK-2021}. 
\begin{lemma}\label{eE}Let $u_0\in \dot{H}^\nu(\Omega)$. Then, for  $m\geq 1$ and $0\leq q,\nu\leq 4$, 
\begin{equation}\label{dE}
t^{\ell}\Vert \partial_t^\ell E_m(t)u_0\Vert_{\dot{H}^q} \leq c t^{-\alpha(q-\nu)/4+m+\alpha-1} \Vert u_0 \Vert_{\dot{H}^\nu},\quad\ \ell=0,1
\end{equation}
where for $\ell=0,\, 0\leq \nu\leq q\leq 4$ .  
\end{lemma}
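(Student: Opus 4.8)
The plan is to estimate the operator directly from its contour representation \eqref{ops}, reducing the $\dot H^q$-norm to an $L^2$-norm via $\|v\|_{\dot H^q}=\|A^{q/2}v\|$ and exploiting the resolvent bound \eqref{res3}. First I would differentiate \eqref{ops} under the integral sign, which is legitimate for $t>0$ because the factor $e^{zt}$ decays exponentially along the rays of $\Gamma_{\theta,\delta}$ (recall $\theta\in(\pi/2,\pi)$, so $\cos\theta<0$) and dominates the polynomial factors; this yields
\begin{equation*}
\partial_t^\ell E_m(t)=\frac{1}{2\pi i}\int_{\Gamma_{\theta,\delta}} z^{\ell-m}e^{zt}(z^\alpha I+A^2)^{-1}\,dz,\qquad \ell=0,1.
\end{equation*}
Since $A^{q/2}$ commutes with the resolvent through the spectral calculus, I would then factor $A^{q/2}(z^\alpha I+A^2)^{-1}u_0=A^{(q-\nu)/2}(z^\alpha I+A^2)^{-1}A^{\nu/2}u_0$ and apply \eqref{res3} with $\mu=(q-\nu)/2$.

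The choice $\mu=(q-\nu)/2$ is admissible precisely when $0\le q-\nu\le 4$, i.e. when $0\le\nu\le q\le 4$; this is exactly where the ordering hypothesis enters, and it cannot be dropped, since for $q<\nu$ the sharp resolvent factor $|z|^{-\alpha(1-(q-\nu)/4)}$ fails for the lowest spectral modes. With this choice, and using $\|A^{\nu/2}u_0\|=\|u_0\|_{\dot H^\nu}$, the estimate \eqref{res3} gives
\begin{equation*}
\|A^{q/2}\partial_t^\ell E_m(t)u_0\|\le \frac{M}{2\pi}\,\|u_0\|_{\dot H^\nu}\int_{\Gamma_{\theta,\delta}}|z|^{\,\ell-m-\alpha(1-(q-\nu)/4)}\,|e^{zt}|\,|dz|.
\end{equation*}

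It then remains to evaluate the scalar contour integral $\int_{\Gamma_{\theta,\delta}}|z|^{a}|e^{zt}|\,|dz|$ with $a=\ell-m-\alpha(1-(q-\nu)/4)$. Here I would split $\Gamma_{\theta,\delta}$ into its two rays and the circular arc and make the balancing choice $\delta=1/t$. On each ray the substitution $s=\rho t$ turns $\int_{1/t}^\infty \rho^{a}e^{\rho t\cos\theta}\,d\rho$ into $t^{-a-1}\int_{1}^\infty s^{a}e^{s\cos\theta}\,ds$, and the remaining integral converges for every real $a$ because $\cos\theta<0$; on the arc $|z|=1/t$ and $|e^{zt}|\le e$, so its contribution is likewise $O(t^{-a-1})$. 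Hence the contour integral is bounded by $C\,t^{-a-1}=C\,t^{\,m-\ell+\alpha-1-\alpha(q-\nu)/4}$, and multiplying by $t^\ell$ reproduces exactly the exponent $-\alpha(q-\nu)/4+m+\alpha-1$ claimed in \eqref{dE}.

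I expect the main obstacle to be the bookkeeping of the powers of $|z|$: one must check that the reduction to $\mu=(q-\nu)/2$ keeps \eqref{res3} applicable (i.e. $\mu\in[0,2]$, which is what forces $\nu\le q$), and that the balancing $\delta\sim 1/t$ converts the power of $|z|$ in the integrand into the correct power of $t$ without losing sharpness. By contrast, the case $\ell=1$ poses no new difficulty beyond the extra factor $z$, which merely shifts $a$ by one and is absorbed by the $t^\ell$ prefactor.
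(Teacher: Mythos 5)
Your argument coincides with the proof the paper points to in \cite{MK-2021}: differentiate \eqref{ops} under the integral, write $A^{q/2}(z^\alpha I+A^2)^{-1}u_0=A^{(q-\nu)/2}(z^\alpha I+A^2)^{-1}A^{\nu/2}u_0$, apply \eqref{res3} with $\mu=(q-\nu)/2$, and evaluate the scalar contour integral with the balancing choice $\delta=1/t$. Your bookkeeping of exponents is correct, so you do prove \eqref{dE} for $\ell=0,1$ in the regime $0\le\nu\le q\le4$ (and, usefully, nothing in your argument actually requires $m\ge1$; the paper later applies the lemma with $m=\gamma\in[0,1]$ and $m=1-\alpha$).

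The gap is the case $\ell=1$, $q<\nu$. The statement imposes the ordering only for $\ell=0$, so for $\ell=1$ it claims the bound for all $0\le q,\nu\le 4$, and the paper genuinely uses this case: in the proof of Theorem \ref{local}, the term $I_4$ is bounded through $\|A^{1/2}E_{1-\alpha}'(s)u_0\|\le c\,s^{\alpha(\nu-1)/4-1}\|u_0\|_{\dot H^\nu}$ with $\nu\in[2,4]$, i.e.\ $q=1<\nu$. Your proof does not reach this, and your claim that the ordering ``cannot be dropped'' is mistaken for the operator that matters here: failure of the resolvent bound for $\mu<0$ does not by itself preclude the time-domain estimate, because cancellation in the contour integral can compensate. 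Concretely, since $z\cdot z^{\alpha-1}(z^\alpha I+A^2)^{-1}=I-A^2(z^\alpha I+A^2)^{-1}$ and $\int_{\Gamma_{\theta,\delta}}e^{zt}\,dz=0$ for $t>0$, one has $\partial_t E_{1-\alpha}(t)=-A^2E_0(t)$; hence $A^{q/2}\partial_t E_{1-\alpha}(t)u_0$ is a contour integral of $A^{(q+4-\nu)/2}(z^\alpha I+A^2)^{-1}A^{\nu/2}u_0$, and \eqref{res3} applies with $\mu=(q+4-\nu)/2\in[0,2]$ whenever $\nu-4\le q\le\nu$; your own $\delta=1/t$ computation then returns exactly the exponent in \eqref{dE}. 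So for $\ell=1$ the time derivative must be traded for $-A^2$ rather than for a factor $z$; that trade is the one idea your proof misses. (In your defense, the statement itself is defective: within its literal range $m\ge1$, the case $\ell=1$, $q<\nu$ is indeed false --- test a single eigenmode, for which $\partial_t E_m(t)\varphi_1\sim c\,t^{m+\alpha-2}\varphi_1$ as $t\to0$ with $c\ne0$, incompatible with the extra factor $t^{\alpha(\nu-q)/4}$ --- and the rescue identity is special to $m=1-\alpha$, where the constant term integrates to zero over the contour.)
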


\subsection{Stochastic convolution} \label{sec:SC}
Now, we introduce the  stochastic convolution 
$$
W_A(t)=\int_0^tE_{\gamma}(t-s)dW(s),\quad t>0,
$$
which appears on the right-hand side of \eqref{form-1}. Clearly, $W_A(t)$ serves as the mild solution of \eqref{main} with $\phi(u)=0$ and $u_0=0$.
By employing Burkholder's inequality \eqref{burk} and the bound \eqref{dE} with $m=\gamma$, $\ell=0$ and $q=3-\beta$, we obtain,  for $p\geq 2$,
\begin{eqnarray*}
\|W_A(t)\|_{L^{p}(\Omega;\dot H^1)}&= & \left\|\int_0^{t}A^{(3-\beta)/2}E_{\gamma}(t-s)A^{(\beta-2)/2} dW(s)\right\|_{L^p(\Omega;H)}\\
&\leq&C\left(\int_0^{t}(t-s)^{2[\alpha(1+\beta)/4+\gamma-1]}ds\right)^{1/2}\|A^{(\beta-2)/2}\|_{\mathcal{L}_2^0}\\
&\leq&ct^{\eta}\|A^{(\beta-2)/2}\|_{\mathcal{L}_2^0},
\end{eqnarray*}
where it is assumed that
\begin{equation}\label{eta}
\eta:=\alpha(1+\beta)/4+\gamma-1/2>0.
\end{equation}
This condition will be consistently enforced throughout the paper. 
Note  that $\eta <3/2$ holds when $\beta\in [1,3]$.
\begin{lemma}\label{LL} 
Let  $\beta\in [1,3]$  and  $\rho=\max\left\{\dfrac{q+2-\beta}{4},\; 0\right\} $, $q=0,1$.  Set 
\begin{equation}\label{theta}
\theta=\alpha (1- \rho)+\gamma.
\end{equation}
If $\theta>1/2$, then for $0<t_1<t_2$,
\begin{equation}\label{i2}
\|W_A(t_2)-W_A(t_1)\|_{L^{2p}(\Omega;\dot H^q)}\leq c(t_2-t_1)^{\min\{\theta-\frac{1}{2}-\epsilon_0,1\}}\|A^{(\beta-2)/2}\|_{\mathcal{L}_2^0},\quad p\geq 1,
\end{equation}
where $\epsilon_0 >0$ if $\theta=3/2$ and $\epsilon_0 =0$ otherwise.
\end{lemma}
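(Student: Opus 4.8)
The plan is to reduce the increment to operator bounds on $A^\mu E_\gamma$ and $A^\mu\partial_t E_\gamma$ and then to carry out a careful splitting of the resulting double integral. Set $\delta:=t_2-t_1$ and $\mu:=(q+2-\beta)/2$, so that $q/2=\mu+(\beta-2)/2$; since $A^{(\beta-2)/2}$ commutes with every function of $A$, any operator $A^{q/2}G(r)$ with $G$ a function of $A$ factors as $A^\mu G(r)\,A^{(\beta-2)/2}$, whence
\[
\|A^{q/2}G(r)Q^{1/2}\|_{HS}\le \|A^\mu G(r)\|_{L(H)}\,\|A^{(\beta-2)/2}Q^{1/2}\|_{HS}=\|A^\mu G(r)\|_{L(H)}\,\|A^{(\beta-2)/2}\|_{\mathcal{L}_2^0}.
\]
From the representation \eqref{ops} and the resolvent estimate \eqref{res3} (using that $A^\mu$ is bounded on $\dot H$ when $\mu<0$, i.e.\ when $\rho=0$), the two smoothing bounds
\[
\|A^\mu E_\gamma(r)\|_{L(H)}\le c\,r^{\theta-1},\qquad \|A^\mu\partial_t E_\gamma(r)\|_{L(H)}\le c\,r^{\theta-2}
\]
hold with $\theta=\alpha(1-\rho)+\gamma$ as in \eqref{theta}, because $\alpha(1-\mu/2)=\theta-\gamma$. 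These two inequalities are the only analytic input needed.

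Next I would decompose the increment in the usual way,
\[
W_A(t_2)-W_A(t_1)=\int_0^{t_1}\!\big[E_\gamma(t_2-s)-E_\gamma(t_1-s)\big]dW(s)+\int_{t_1}^{t_2}\!E_\gamma(t_2-s)\,dW(s)=:I_1+I_2,
\]
and estimate each term in $L^{2p}(\Omega;\dot H^q)$ by Burkholder's inequality \eqref{burk}. For the \emph{singular} term $I_2$, Burkholder together with the factorization and the first smoothing bound gives
\[
\|I_2\|_{L^{2p}(\Omega;\dot H^q)}^2\le c\,\|A^{(\beta-2)/2}\|_{\mathcal{L}_2^0}^2\int_{t_1}^{t_2}(t_2-s)^{2(\theta-1)}ds,
\]
and since $\theta>1/2$ the exponent exceeds $-1$, so the integral equals $c\,\delta^{2\theta-1}$ and $\|I_2\|_{L^{2p}(\Omega;\dot H^q)}\le c\,\delta^{\theta-1/2}\|A^{(\beta-2)/2}\|_{\mathcal{L}_2^0}$.

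For the \emph{memory} term $I_1$ I would write the operator difference through the fundamental theorem of calculus, $E_\gamma(t_2-s)-E_\gamma(t_1-s)=\int_{t_1-s}^{t_2-s}\partial_t E_\gamma(r)\,dr$, apply the Hilbert--Schmidt triangle inequality and the second smoothing bound, and then use Burkholder to arrive, after the substitution $\tau=t_1-s$, at
\[
\|I_1\|_{L^{2p}(\Omega;\dot H^q)}^2\le c\,\|A^{(\beta-2)/2}\|_{\mathcal{L}_2^0}^2\,J,\qquad J:=\int_0^{t_1}\Big(\int_{\tau}^{\tau+\delta}r^{\theta-2}\,dr\Big)^2 d\tau.
\]
The crux of the proof, and the step I expect to be the main obstacle, is the sharp evaluation of $J$: the inner integrand $r^{\theta-2}$ is singular at $r=0$ whenever $\theta<2$, while the outer variable $\tau$ runs down to $0$, so the two singular regimes must be separated.

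I would control $J$ by splitting the $\tau$–integral at the scale $\tau=\delta$. On $\{\tau<\delta\}$, monotonicity of $r^{\theta-2}$ gives $\int_\tau^{\tau+\delta}r^{\theta-2}dr\le c\,\tau^{\theta-1}$ when $\theta<1$ and a constant multiple of $\delta^{\theta-1}$ (with a logarithm at $\theta=1$) otherwise, so this part contributes at most $c\,\delta^{2\theta-1}$, finite precisely because $\theta>1/2$. On $\{\tau\ge\delta\}$, monotonicity gives $\int_\tau^{\tau+\delta}r^{\theta-2}dr\le\delta\,\tau^{\theta-2}$, whence this part is bounded by $c\,\delta^2\int_\delta^{t_1}\tau^{2\theta-4}d\tau$, which equals $c\,\delta^{2\theta-1}$ for $\theta<3/2$, $c\,\delta^2\log(t_1/\delta)$ for $\theta=3/2$, and $c\,\delta^2 t_1^{2\theta-3}\le c\,\delta^2$ for $\theta>3/2$. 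Collecting the two regions yields $J\le c\,\delta^{\min\{2\theta-1,\,2\}}$ up to the logarithm at $\theta=3/2$, hence $\|I_1\|_{L^{2p}(\Omega;\dot H^q)}\le c\,\delta^{\min\{\theta-1/2-\epsilon_0,\,1\}}\|A^{(\beta-2)/2}\|_{\mathcal{L}_2^0}$, with $\epsilon_0>0$ only at $\theta=3/2$ (the logarithm being absorbed into an arbitrarily small power of $\delta$). Since $0<\delta\le T$ allows the faster $I_2$–rate to be downgraded to the same exponent, adding the bounds for $I_1$ and $I_2$ gives \eqref{i2}.
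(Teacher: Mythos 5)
Your proof is correct, and it reaches the paper's estimate by a genuinely different treatment of the memory term. The decomposition into $I_1+I_2$ and the bound for the singular term $I_2$ coincide with the paper's. The difference is in $I_1$: the paper runs a three-case analysis in $\theta$, using for the main case $1/2<\theta<3/2$ the trick of \cite[Lemma A.2]{BYZ-2019} — rewriting $I_1=\int_{t_1}^{t_2}\int_0^{t_1}A^{q/2}E_\gamma'(t-s)\,dW(s)\,dt$, pulling the $L^{2p}(\Omega;H)$-norm inside the outer time integral by Minkowski's inequality, and only then applying Burkholder to the inner stochastic integral (this interchange is what keeps the singular factor from being squared, and it is exactly why that argument breaks down at $\theta=3/2$ and forces the separate $\epsilon$-case and the separate FTC argument for $3/2<\theta<2$). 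You instead apply Burkholder once, use the fundamental theorem of calculus at the level of the deterministic operator norm, and reduce everything to the elementary double integral $J=\int_0^{t_1}\bigl(\int_\tau^{\tau+\delta}r^{\theta-2}\,dr\bigr)^2 d\tau$, which you evaluate by splitting at $\tau=\delta$; the three regimes $\theta<3/2$, $\theta=3/2$ (logarithm, hence $\epsilon_0$), $\theta>3/2$ then fall out of one real-variable computation rather than three separate stochastic estimates. The rates obtained are identical, including the downgrading of the $I_2$ rate via $\delta\le T$ when $\theta>3/2$, and your smoothing bounds $\|A^\mu E_\gamma(r)\|\le cr^{\theta-1}$, $\|A^\mu E_\gamma'(r)\|\le cr^{\theta-2}$ are precisely \eqref{dE} with $m=\gamma$ (the case $\rho=0$ handled by boundedness of $A^\mu$, $\mu\le 0$, on $\dot H$). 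Two cosmetic points: Burkholder must be applied \emph{before} the operator-norm bound (your phrasing lists the steps in the reverse order, though the inequality you write is the right one), and the monotonicity of $r^{\theta-2}$ used in both regions rests on $\theta<2$, which indeed always holds here since $\theta\le\alpha+\gamma<2$ — worth stating explicitly.
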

\begin{proof}  
Assuming $t_1<t_2$, the difference $W_A(t_2)-W_A(t_1)$ is written as 
\begin{equation}\label{Wdiff}
W_A(t_2)-W_A(t_1) = \int_0^{t_1}(E_{\gamma}(t_2-s)-E_{\gamma}(t_1-s))dW(s) +\int_{t_1}^{t_2}E_{\gamma}(t_2-s)dW(s)=:I_1+I_2.
\end{equation}
For $1/2<\theta<3/2$, we follow the analysis of the proof given in \cite[Lemma A.2]{BYZ-2019}. Thus, we have  
\begin{eqnarray*}
\|I_1\|_{L^{2p}(\Omega;\dot H^q)} &= & \left\|\int_{t_1}^{t_2}\int_0^{t_1}A^{q/2}E_{\gamma}'(t-s)dW(s)dt \right\|_{L^{2p}(\Omega;H)}\\
  &\leq & \int_{t_1}^{t_2}\left\|\int_0^{t_1}A^{q/2}E_{\gamma}'(t-s)dW(s)\right\|_{L^2(\Omega;H)}dt \\
&\leq & \int_{t_1}^{t_2}\left(\int_0^{t_1}\|A^{(q+2-\beta)/2}E_{\gamma}'(t-s)\|^2 \|A^{(\beta-2)/2}\|_{\mathcal{L}_2^0}^{2}ds\right)^{1/2}dt \\
&\leq & c\int_{t_1}^{t_2}\left(\int_0^{t_1}(t-s)^{2(\theta-2)}ds \|A^{(\beta-2)/2}\|_{\mathcal{L}_2^0}^{2}\right)^{1/2}dt\\
 &\leq & c\int_{t_1}^{t_2}\left((t-t_1)^{2\theta-3} \|A^{(\beta-2)/2}\|_{\mathcal{L}_2^0}^2\right)^{1/2}dt\\
 &\leq & c(t_2-t_1)^{\theta-1/2} \|A^{(\beta-2)/2}\|_{\mathcal{L}_2^0}.
\end{eqnarray*}
For $\theta=3/2$, we get 
\begin{eqnarray*}
\|I_1\|_{L^{2p}(\Omega;\dot H^q)}^2
&\leq &c\int_0^{t_1}(t_1-s)^{-1+2\epsilon} \left(\int_{t_1}^{t_2}(t-s)^{-\epsilon} dt \right)^{2}ds\|A^{(\beta-2)/2}\|_{\mathcal{L}_2^0}^2\\
&\leq &ct_1^{2\epsilon} (t_2-t_1)^{2(1-\epsilon)}\|A^{(\beta-2)/2}\|_{\mathcal{L}_2^0}^2.
\end{eqnarray*}
For $3/2<\theta<2$,  \eqref{burk} and \eqref{dE} with $\ell=1$ and $m=\gamma$ yield  
\begin{eqnarray*}
\|I_1\|_{L^{2p}(\Omega;\dot H^q)}^2&=&\left\|\int_0^{t_1}A^{q/2}(E_{\gamma}(t_2-s)-E_{\gamma}(t_1-s) ) 
dW(s)\right\|_{L^{2p}(\Omega;H)}^2\\
&= &\left\|\int_0^{t_1}\left(\int_{t_1}^{t_2}A^{q/2}E_{\gamma}'(t-s)dt \right)dW(s)\right\|^{2}_{L^{2p}(\Omega;H)}\\
&\leq &\left\|\int_0^{t_1}\left(\int_{t_1}^{t_2}A^{(q+2-\beta)/2}E_{\gamma}'(t-s)dt \right)A^{(\beta-2)/2}dW(s)\right\|^{2}_{L^{2p}(\Omega;H)}\\
&\leq &c\int_0^{t_1} \left(\int_{t_1}^{t_2} (t-s)^{\theta-2}dt \right)^{2}ds\|A^{(\beta-2)/2}\|_{\mathcal{L}_2^0}^2\\
&\leq &c\int_0^{t_1}(t_1-s)^{2\theta-4} \left(\int_{t_1}^{t_2} dt \right)^{2}ds\|A^{(\beta-2)/2}\|_{\mathcal{L}_2^0}^2\\
&\leq &ct_1^{2\theta-3} (t_2-t_1)^2\|A^{(\beta-2)/2}\|_{\mathcal{L}_2^0}^2.
\end{eqnarray*}
To estimate $I_2$, we again use \eqref{burk} and \eqref{dE} as follow 
\begin{eqnarray*}
\|I_2\|_{L^{2p}(\Omega;\dot H^q)}^2
&\leq &\int_{t_1}^{t_2}\|A^{(q+2-\beta)/2}E_{\gamma}(t_2-s) A^{(\beta-2)/2}\|_{\mathcal{L}_2^0}^2ds\\
&\leq &c\int_{t_1}^{t_2}(t_2-s)^{2\theta-2}ds \|A^{(\beta-2)/2}\|_{\mathcal{L}_2^0}^2\\
&= &c(t_2-t_1)^{2\theta-1} \|A^{(\beta-2)/2}\|_{\mathcal{L}_2^0}^2\leq c(t_2-t_1)^2 T^{2\theta-3} \|A^{(\beta-2)/2}\|_{\mathcal{L}_2^0}^2,
\end{eqnarray*}
which completes the proof of \eqref{i2}.
%
\end{proof} 
We note that, by assumption \eqref{eta},    $\theta>1/2$ for $\beta\in[1,3] $ and $q= 0,\;1$. Moreover, if $q=1$ then $\theta-1/2=\eta$.
In the rest of the paper,  we will assume $\theta\neq3/2$ for the sake of simplicity.


\section{Regularity of the Solution} \label{sec:regularity}
In this section, we recall some properties of the nonlinear term $\phi(u)$ and discuss the regularity  in time of the exact solution $u$. The results are essential in the  error analysis discussed in the forthcoming sections.  For the first lemma, we refer to \cite{EL-1992,Larsson-2006, 12}.

\begin{lemma}\label{lem:6.2m} There is $C>0$ such that for all   $u$ and $v\in \dot H^1$, 
\begin{eqnarray}
\|\phi(u)\|&\leq& C \big(\|u\|+\|u\|_{\dot H^1}^3\big)\label{C_1m},\\
\|\phi(u)-\phi(v)\|&\leq& C\big(1+\|u\|_{\dot H^1}^2+\|v\|_{\dot H^1}^2\big)\|u-v\|_{\dot H^1},\label{C_2m}\\
\|A^{-1/2}(\phi(u)-\phi(v))\|&\leq& C\big(1+\|u\|_{\dot H^1}^2+\|v\|_{\dot H^1}^2\textbf{•})\|u-v\|,\label{C_3m}\\						
{(\phi(u)-\phi(v),u-v)}&\leq &{\|u-v\|^2.}\label{f-tr} 
\end{eqnarray}
Furthermore,  for $u$  and $v$ $\in\dot H^2$, there holds
\begin{equation}\label{g-tr}
\|\nabla P(\phi(u)-\phi(v))\|\leq  C\|u-v\|_{\dot H^1}\big(1+\|u\|_{\dot H^2}^2+\|v\|_{\dot H^2}^2\big).
\end{equation}
\end{lemma}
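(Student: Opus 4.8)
The plan is to reduce every one of the five bounds to a common toolkit: the norm equivalence $\|w\|_{H^s}\le C\|w\|_{\dot H^s}$ on $\dot H^s$ together with Poincaré's inequality (so that $\|w\|\le C\|w\|_{\dot H^1}$ for mean-zero $w$), and the Sobolev embeddings valid on the convex domain $\mathcal D\subset\mathbb R^d$ with $d\le 3$, namely $\dot H^1\hookrightarrow L^6$, $\dot H^2\hookrightarrow L^\infty$, and $\dot H^2\hookrightarrow W^{1,6}$. The algebraic engine throughout is the factorization $u^3-v^3=(u-v)(u^2+uv+v^2)$, which isolates the difference $u-v$ from a quadratic factor in $u,v$; the linear part of $\phi$ will always contribute the ``$1$'' in the various prefactors.

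For \eqref{C_1m} I would write $\|\phi(u)\|\le\|u^3\|+\|u\|=\|u\|_{L^6}^3+\|u\|$ and invoke $\|u\|_{L^6}\le C\|u\|_{\dot H^1}$. For \eqref{C_2m} the factorization gives $\|(u-v)(u^2+uv+v^2)\|\le\|u-v\|_{L^6}\,\|u^2+uv+v^2\|_{L^3}$ by Hölder with exponents $(6,3)$, and then $\|u^2+uv+v^2\|_{L^3}\le C(\|u\|_{L^6}^2+\|v\|_{L^6}^2)$; the embeddings finish the bound. For \eqref{C_3m} I would use the duality characterization $\|A^{-1/2}w\|=\sup_{\chi\in\dot H^1}(w,\chi)/\|\chi\|_{\dot H^1}$, noting that $P\chi=\chi$ for $\chi\in\dot H$ so the projection implicit in $A^{-1/2}=A^{-1/2}P$ is harmless; pairing $\phi(u)-\phi(v)$ against $\chi$ with Hölder exponents $(2,3,6)$ yields $\|(u-v)(u^2+uv+v^2)\|_{L^{6/5}}\le\|u-v\|\,\|u^2+uv+v^2\|_{L^3}$ against $\|\chi\|_{L^6}\le C\|\chi\|_{\dot H^1}$. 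This moves the full regularity onto the test function and leaves only $\|u-v\|$ on the difference, exactly as required.

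For \eqref{f-tr} the point is the monotonicity structure of $\phi$: since $u^2+uv+v^2=(u+\tfrac12 v)^2+\tfrac34 v^2\ge0$ pointwise (equivalently $\phi'(s)=3s^2-1\ge-1$), one has $(u^3-v^3,u-v)=\int_{\mathcal D}(u-v)^2(u^2+uv+v^2)\,dx\ge0$, and subtracting the linear part gives the one-sided Lipschitz bound $(\phi(u)-\phi(v),u-v)=(u^3-v^3,u-v)-\|u-v\|^2$, which is the content of \eqref{f-tr}. For \eqref{g-tr} I would first note $\nabla P w=\nabla w$ (the projection removes only a constant), then differentiate $\nabla(u^3-v^3)=3u^2\nabla(u-v)+3(u-v)(u+v)\nabla v$. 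The first piece is controlled by $\|u\|_{L^\infty}^2\|\nabla(u-v)\|\le C\|u\|_{\dot H^2}^2\|u-v\|_{\dot H^1}$ via $\dot H^2\hookrightarrow L^\infty$; the second by Hölder with exponents $(6,\infty,3)$, giving $\|u-v\|_{L^6}\|u+v\|_{L^\infty}\|\nabla v\|_{L^3}\le C\|u-v\|_{\dot H^1}(\|u\|_{\dot H^2}+\|v\|_{\dot H^2})\|v\|_{\dot H^2}$ via $\dot H^2\hookrightarrow L^\infty$ and $\dot H^2\hookrightarrow W^{1,6}\hookrightarrow W^{1,3}$, and then Young's inequality symmetrizes the product into $\|u\|_{\dot H^2}^2+\|v\|_{\dot H^2}^2$.

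The main obstacle I expect is purely the dimensional bookkeeping in the $d=3$ case, where the Sobolev exponents are critical: \eqref{C_2m} and \eqref{C_3m} sit exactly at the $L^6$ endpoint of $\dot H^1$, and \eqref{g-tr} needs the simultaneous use of $\dot H^2\hookrightarrow L^\infty$ and $\dot H^2\hookrightarrow W^{1,6}$, so the Hölder exponents must be chosen so that the cubic nonlinearity produces precisely a quadratic factor in the higher norm and a single factor of the difference in the lower norm, with essentially no slack to spare. The only other care needed is the correct treatment of the projection $P$ and of $A^{-1/2}$ through duality in \eqref{C_3m}, which I would dispatch at the very start.
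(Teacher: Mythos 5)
The paper itself gives no proof of this lemma: it simply refers the reader to \cite{EL-1992}, \cite{Larsson-2006} and \cite{12}, so there is no internal argument to compare against and your proof has to stand on its own. For \eqref{C_1m}, \eqref{C_2m}, \eqref{C_3m} and \eqref{g-tr} it does: your toolkit (Poincar\'e and the $\dot H^s$/$H^s$ norm equivalence, the embeddings $H^1\hookrightarrow L^6$, $H^2\hookrightarrow L^\infty$, $H^2\hookrightarrow W^{1,6}$ for $d\le 3$, the factorization $u^3-v^3=(u-v)(u^2+uv+v^2)$ with H\"older splittings $(6,3)$, $(2,3,6)$ and $(6,\infty,3)$, the duality identity $\|A^{-1/2}Pw\|=\sup_{\chi\in\dot H^1}(w,\chi)/\|\chi\|_{\dot H^1}$, and $\nabla Pw=\nabla w$) is exactly what is needed, the exponents all check out, and your handling of the projection hidden in $A^{-1/2}$ is correct. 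This is in substance the standard argument of the cited references, so the route is not different, merely written out where the paper declines to do so.

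The one point that needs to be made explicit is \eqref{f-tr}. Your identity $(\phi(u)-\phi(v),u-v)=(u^3-v^3,u-v)-\|u-v\|^2$ together with $(u^3-v^3,u-v)\ge 0$ proves the \emph{lower} bound $(\phi(u)-\phi(v),u-v)\ge -\|u-v\|^2$; it does not, and cannot, prove the inequality as printed, which is an upper bound by $+\|u-v\|^2$ and is false for $\phi(s)=s^3-s$: with $v=0$ the left-hand side is $\|u\|_{L^4}^4-\|u\|^2$, which exceeds $\|u\|^2$ once $u$ is large. The printed inequality is a sign slip (it is the correct statement for the opposite convention $\phi(s)=s-s^3$), and the quasi-monotonicity form you derived is precisely the one the paper actually invokes later, in the energy arguments of Lemmas \ref{error_integral} and \ref{sum_e_2}, where the error equations carry compensating sign conventions. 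So your mathematics is right, but rather than asserting that your identity ``is the content of'' \eqref{f-tr}, you should state plainly that you are proving the corrected form $-(\phi(u)-\phi(v),u-v)\le\|u-v\|^2$ and that the inequality as displayed in the lemma cannot hold.
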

As a a result, we obtain 
\begin{lemma}\label{lem:6.2} Let $u$ and $v\in L^{8p}(\Omega;{\dot H^2})$, $p\geq 1$, with 
$\|u\|_{L^{8p}(\Omega;\dot H^1)}\leq R$. 
Then
\begin{eqnarray}
\|\phi(u)\|_{L^{2p}(\Omega;\dot H)}&\leq& C(R), \label{C_1}\\
{\|\nabla P(\phi(u)-\phi(v))\|_{L^{2p}(\Omega;\dot H)}} &\leq &{ C\|u-v\|_{L^{4p}(\Omega;\dot H^1)}\left(1+\|u\|_{L^{8p}(\Omega;\dot H^2)}^2+\|v\|_{L^{8p}(\Omega;\dot H^2)}^2\right)}.\label{C_3}
\end{eqnarray}
\end{lemma}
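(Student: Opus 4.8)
The plan is to deduce both estimates directly from the pointwise (in $\omega$) bounds of Lemma \ref{lem:6.2m} by taking $L^{2p}(\Omega)$ norms and applying H\"older's inequality in the probability space. Throughout I will use the elementary fact that, since $(\Omega,\mathcal{F},\mathbb{P})$ is a probability space, the $L^r(\Omega)$ norms are monotone in $r$, so that a lower exponent is always controlled by a higher one.

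For \eqref{C_1}, I would start from the pointwise bound \eqref{C_1m}, namely $\|\phi(u)\| \leq C(\|u\| + \|u\|_{\dot H^1}^3)$, and take the $L^{2p}(\Omega)$ norm. By the triangle inequality,
\begin{equation*}
\|\phi(u)\|_{L^{2p}(\Omega;\dot H)} \leq C\big(\|u\|_{L^{2p}(\Omega;H)} + \| \, \|u\|_{\dot H^1}^3 \,\|_{L^{2p}(\Omega)}\big).
\end{equation*}
The second term equals $\|u\|_{L^{6p}(\Omega;\dot H^1)}^3$, since $(\mathbb{E}\|u\|_{\dot H^1}^{6p})^{1/(2p)} = \|u\|_{L^{6p}(\Omega;\dot H^1)}^3$. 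Using the embedding $\|\cdot\| \leq C\|\cdot\|_{\dot H^1}$ for the first term and the monotonicity $\|u\|_{L^{6p}(\Omega;\dot H^1)} \leq \|u\|_{L^{8p}(\Omega;\dot H^1)} \leq R$ for the second, both terms are bounded by a constant depending only on $R$, which gives \eqref{C_1}.

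For \eqref{C_3}, I would begin with \eqref{g-tr} and abbreviate $X := \|u-v\|_{\dot H^1}$ and $Y := 1 + \|u\|_{\dot H^2}^2 + \|v\|_{\dot H^2}^2$, so that $\|\nabla P(\phi(u)-\phi(v))\| \leq C\,XY$ holds pointwise. The crux is the choice of H\"older exponents: taking the $L^{2p}(\Omega)$ norm and applying H\"older with the conjugate pair $(2,2)$ yields $\|XY\|_{L^{2p}(\Omega)} \leq \|X\|_{L^{4p}(\Omega)}\,\|Y\|_{L^{4p}(\Omega)}$. The first factor is precisely $\|u-v\|_{L^{4p}(\Omega;\dot H^1)}$; for the second, the triangle inequality together with $\| \,\|u\|_{\dot H^2}^2\, \|_{L^{4p}(\Omega)} = \|u\|_{L^{8p}(\Omega;\dot H^2)}^2$ (and likewise for $v$) gives $\|Y\|_{L^{4p}(\Omega)} \leq 1 + \|u\|_{L^{8p}(\Omega;\dot H^2)}^2 + \|v\|_{L^{8p}(\Omega;\dot H^2)}^2$. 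Combining the two factors produces \eqref{C_3}.

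The argument is essentially routine once the exponent arithmetic is tracked carefully; the only point requiring genuine attention is matching the factors of the nonlinearity to the stated norms, which forces the specific exponent pairing ($L^{4p}$ for the difference and $L^{8p}$ for each quadratic factor) and thereby explains the integrability hypothesis $u,v \in L^{8p}(\Omega;\dot H^2)$ in the statement.
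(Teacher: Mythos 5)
Your proof is correct and fills in exactly the argument the paper intends: the paper states Lemma \ref{lem:6.2} as an immediate consequence of Lemma \ref{lem:6.2m} ("As a result, we obtain"), i.e., by taking $L^{2p}(\Omega)$ norms of the pointwise bounds \eqref{C_1m} and \eqref{g-tr} and applying H\"older's inequality in the probability space. Your exponent bookkeeping ($L^{4p}$ for the difference, $L^{8p}$ for the quadratic factors, and monotonicity of $L^r(\Omega)$ norms) is precisely what makes the stated norms appear, so there is nothing to add.
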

In our analysis, we shall assume that \eqref{main} admits a unique solution $u\in C([0,T];L^{16p}(\Omega;\dot H^\nu))$ when the initial data $u_0\in L^{16p}(\Omega;\dot H^\nu)$ with $\nu\in [2,4]$. It is worth noting that this assumption appears to be reasonable. Indeed, a similar assumption was substantiated for the classical stochastic Cahn--Hilliard equation  ($\alpha=1,\; \gamma=0$)  in  \cite[Theorem 2.2]{12}.  
%
\begin{theorem}\label{local}
Let $u_0\in L^{16p}(\Omega;\dot H^\nu)$, $p\geq 1$,  with $2\leq \nu\leq 4$. Then, for $\beta\in[1,3]$,  
\begin{equation}\label{holder}
\|u(t)-u(s)\|_{L^{{4p}}(\Omega;\dot H^q)}\leq c (t-s)^{\min\{\alpha(\nu -q)/4,\theta-1/2\}},\quad  
q=0,1,
\end{equation}
for all $0<s<t<T$, where $\theta$ is given by \eqref{theta}.
\end{theorem}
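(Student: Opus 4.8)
The plan is to insert the mild-solution formula \eqref{form-1} and estimate the temporal increment term by term. Writing $N(t):=\int_0^tE_0(t-r)AP\phi(u(r))\,dr$, we have for $0<s<t<T$
\begin{equation*}
u(t)-u(s)=\big(E_{1-\alpha}(t)-E_{1-\alpha}(s)\big)u_0-\big(N(t)-N(s)\big)+\big(W_A(t)-W_A(s)\big),
\end{equation*}
so it suffices to bound the $L^{4p}(\Omega;\dot H^q)$-norm of each of the three increments by $c(t-s)^{\min\{\alpha(\nu-q)/4,\,\theta-1/2\}}$. The stochastic increment is already handled: Lemma \ref{LL}, applied with $2p$ in place of $p$ so that the left-hand side is an $L^{4p}$-norm, gives $\|W_A(t)-W_A(s)\|_{L^{4p}(\Omega;\dot H^q)}\le c(t-s)^{\min\{\theta-1/2,1\}}$ under the convention $\theta\ne3/2$; the cap at $1$ is inactive once we take the final minimum with $\alpha(\nu-q)/4<1$, so this term contributes the exponent $\theta-1/2$.

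For the initial-data increment I would write it as a time integral of the derivative, $(E_{1-\alpha}(t)-E_{1-\alpha}(s))u_0=\int_s^t\partial_\tau E_{1-\alpha}(\tau)u_0\,d\tau$, and estimate the integrand in $\dot H^q$. Since $m=1-\alpha<1$, Lemma \ref{eE} does not apply verbatim, so I would derive the needed bound directly from the resolvent estimate \eqref{res3}: using $z^\alpha(z^\alpha I+A^2)^{-1}=I-A^2(z^\alpha I+A^2)^{-1}$ in the contour representation of $\partial_\tau E_{1-\alpha}$ moves the spatial power into the admissible range $[0,2]$ and yields $\|\partial_\tau E_{1-\alpha}(\tau)u_0\|_{\dot H^q}\le c\,\tau^{\alpha(\nu-q)/4-1}\|u_0\|_{\dot H^\nu}$. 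Because $\nu\in[2,4]$ and $q\in\{0,1\}$ give $\alpha(\nu-q)/4\in(0,1)$, this singularity is integrable; integrating over $[s,t]$ and invoking $t^\kappa-s^\kappa\le(t-s)^\kappa$ with $\kappa=\alpha(\nu-q)/4$ produces the exponent $\alpha(\nu-q)/4$ after taking $L^{4p}(\Omega)$-norms, for which $u_0\in L^{16p}(\Omega;\dot H^\nu)$ is more than enough.

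The nonlinear increment is where the real work lies, and I expect it to be the main obstacle. Splitting $N(t)-N(s)$ into a ``recent'' piece $\int_s^tE_0(t-r)AP\phi(u(r))\,dr$ and a ``history'' piece $\int_0^s\big(E_0(t-r)-E_0(s-r)\big)AP\phi(u(r))\,dr$ (the latter rewritten again via $\int_s^t\partial_\tau E_0$), both reduce to smoothing bounds for $A^{q/2}E_0(\tau)$ coming from \eqref{res3}. The genuine difficulty is the \emph{unbounded} factor $AP$ in front of $\phi(u)$: to keep the temporal exponent from dropping, one must feed in spatial regularity of $\phi(u)$. Here I would use Lemma \ref{lem:6.2}: $P\phi(u)$ is bounded in $L^{2p}(\Omega;\dot H^1)$ by \eqref{C_3}, and for $d\le3$ the relevant Sobolev embeddings upgrade this to a bound for $P\phi(u)$ in $L^{2p}(\Omega;\dot H^2)$ (the Neumann condition $\partial_\nu u=0$ passes to $\phi(u)=u^3-u$). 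The cubic growth is absorbed by H\"older's inequality on $\Omega$, which is precisely why the strong assumption $u_0\in L^{16p}(\Omega;\dot H^\nu)$ and the corresponding moment bound on $u$ are imposed. With $AP\phi(u)$ thus controlled in $L^{2p}(\Omega;\dot H)$, the operator bound $\|A^{q/2}E_0(\tau)\|\le c\,\tau^{\alpha(4-q)/4-1}$ yields H\"older exponent $\alpha(4-q)/4\ge\alpha(\nu-q)/4$ for the nonlinear increment, so it never dictates the final rate.

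Collecting the three bounds and taking the minimum of the exponents gives \eqref{holder}; since the nonlinear contribution is dominated, the binding exponents are $\alpha(\nu-q)/4$ (initial data) and $\theta-1/2$ (noise). The delicate points to watch are the derivative bound for $E_{1-\alpha}$ outside the range of Lemma \ref{eE}, the regularity transfer raising $P\phi(u)$ to $\dot H^2$ together with the attendant H\"older bookkeeping on $\Omega$, and keeping careful track of the integrability exponents so that the final norm is exactly the claimed $L^{4p}(\Omega;\dot H^q)$.
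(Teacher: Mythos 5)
Your decomposition is the same as the paper's: the mild-solution split into initial-data, nonlinear, and stochastic increments, with Lemma \ref{LL} (applied with $2p$ in place of $p$) for the noise and a time-integrated derivative bound for $E_{1-\alpha}$ for the initial data. Your handling of the initial-data term is in fact more careful than the paper's: the paper invokes \eqref{dE} with $m=1-\alpha<1$, outside the stated range $m\geq1$ of Lemma \ref{eE}, whereas your derivation from \eqref{res3} via $z^\alpha(z^\alpha I+A^2)^{-1}=I-A^2(z^\alpha I+A^2)^{-1}$ (which moves the spatial power $(q-\nu+4)/2$ into the admissible window $[0,2]$) legitimately yields $\|\partial_\tau E_{1-\alpha}(\tau)u_0\|_{\dot H^q}\leq c\,\tau^{\alpha(\nu-q)/4-1}\|u_0\|_{\dot H^\nu}$. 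Where you genuinely diverge is the nonlinear increment. The paper does \emph{not} raise $\phi(u)$ to $\dot H^2$: it keeps only the $L^{4p}(\Omega;H)$ bound \eqref{C_1} on $P\phi(u)$ and loads all powers of $A$ onto the solution operator, using $\|A^{(q+2)/2}E_0(\tau)v\|\leq c\,\tau^{\alpha(2-q)/4-1}\|v\|$; this gives the exponent $\alpha(2-q)/4$ (i.e.\ $\alpha/4$ for $q=1$), which suffices for the stated rate when $\nu=2$ but falls short of it when $\nu>2$ and $\theta-\tfrac12>\alpha(2-q)/4$. Your variant---spending spatial regularity of $\phi(u)$ to keep the temporal exponent at $\alpha(4-q)/4$---is what is actually needed to cover the full range $\nu\in[2,4]$ of the statement, so the two routes trade regularity against exponent in opposite directions.

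However, the step by which you obtain that spatial regularity is wrong as written. You claim that the $L^{2p}(\Omega;\dot H^1)$ bound on $P\phi(u)$ coming from Lemma \ref{lem:6.2} is ``upgraded'' to $L^{2p}(\Omega;\dot H^2)$ by Sobolev embedding; no embedding increases differentiability, so this inference is empty, and an $\dot H^1$ bound on $\phi(u)$ alone cannot control $AP\phi(u)$. The correct route uses the regularity of $u$ itself, not of $\phi(u)$: by the standing assumption $u\in C([0,T];L^{16p}(\Omega;\dot H^\nu))$ with $\nu\geq2$, the solution has $\dot H^2$ moments, and since $H^2(\mathcal{D})$ is an algebra for $d\leq3$ (via $H^2\hookrightarrow L^\infty$) one gets $\|\phi(u)\|_{H^2}\leq C\bigl(\|u\|_{H^2}+\|u\|_{H^2}^3\bigr)$; the boundary identity $\partial_\nu\phi(u)=(3u^2-1)\partial_\nu u=0$ then ensures $P\phi(u)\in D(A)$, so that $\|AP\phi(u)\|_{L^{4p}(\Omega;H)}\leq C\bigl(1+\|u\|^3_{L^{12p}(\Omega;\dot H^2)}\bigr)$, which is finite since $12p\leq16p$. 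With this replacement your argument for the nonlinear increment closes and the rest of the proof (recent/history split, integration of the smoothing bounds, final minimum of exponents) is sound; without it, the key bound on $AP\phi(u)$ is unjustified.
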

\begin{proof}We provide the proof for \eqref{holder} when $q=1$. 
From the solution representation \eqref{form-1}, we obtain for $h>0$:
\begin{eqnarray*}
u(t+h)-u(t)&=& \left\{ (E_{1-\alpha}(t+h)-E_{1-\alpha}(t))u_0\right\}\\
& &+\left\{\int_{t}^{t+h} E_0(s)AP\phi(u(t+h-s))ds +\int_0^{t}(E_0(t+h-s)-E_0(t-s))AP\phi(u(s))ds\right\}\\
& &+W_A(t+h)-W_A(t)\\
& &=:I_4+I_5+I_6.
\end{eqnarray*}
Now, the bound for $I_4$ is deduced from \eqref{dE} with $\ell=1$ as follows
\begin{eqnarray*}
\|I_4\|_{L^{4p}(\Omega;\dot H^1)}& \leq & \left\|\int_t^{t+h} A^{1/2} E_{1-\alpha}'(s)u_0\; ds\right\|_{L^{4p}(\Omega;\dot H)} \\
&\leq &c\int_t^{t+h} s^{\alpha(\nu-1)/4-1}\; ds\;\| u_0\|_{L^{4p}(\Omega;\dot H^\nu)}\\
  &\leq & c h^{\alpha(\nu-1)/4}.
\end{eqnarray*}
For $I_5$,   using   \eqref{dE} and \eqref{C_1} give
\begin{eqnarray*}
\|I_5\|_{L^{4p}(\Omega;\dot H^1)}& \leq & \left\|\int_{t}^{t+h}A^{3/2}E_0(t+h-s)P\phi(u(s))ds\right\|_{L^{4p}(\Omega;H)}\\
&&+\left\|\int_0^{t}\int_{t-s}^{t+h-s}A^{3/2}E_0'(\eta)\;d\eta P\phi(u(s))ds\right\|_{L^{4p}(\Omega;H)}\\
& \leq & \int_{t}^{t+h}s^{\alpha/4-1} \|P\phi(u(t+h-s))\|_{L^{4p}(\Omega;H)}ds\\
&&+\int_0^{t}((t-s)^{\alpha/4-1}-(t+h-s)^{\alpha/4-1})\|P\phi(u(s))\|_{L^{4p}(\Omega;H)}ds\\
& \leq & c h^{\alpha/4}.
\end{eqnarray*}
Estimate of $I_6$ is given in Lemma \ref{LL}.  The bound for $q=0$ can be derived analogously.
\end{proof}
\begin{remark}
The estimate in Theorem \ref{local} leads to 
\begin{eqnarray}\label{lip}
\|\phi(u(t))-\phi(u(s))\|_{L^{{2p}}(\Omega;\dot H^{q-1})}
&\leq &\left\| \; \|u(t)-u(s)\|_{\dot H^{q}}\big(1+\|u(t)\|^2_{\dot H^1}+\|u(s)\|^2_{\dot H^1}\big)\right\|_{L^{2p}(\Omega, \mathbb{R})}\nonumber\\
  & \leq & \|u(t)-u(s)\|_{L^{{4p}}(\Omega, \dot H^{q})}
  \left(1+\sup_{s\in[0,T]}\|u(s)\|^2_{L^{{8p}}(\Omega, \dot H^1)}\right)\\
  &\leq & c (t-s)^{\min\{{{\frac{\alpha(\nu-q)}{4},\theta-\frac{1}{2}\}}}},\nonumber 
 \end{eqnarray}
where  $q=0,1$ and $\beta\in[1,3]$.
\end{remark}


\section {Galerkin Semidiscrete Problem}\label{sec:FE}
In this section, we present the semidiscrete finite element scheme and derive related error estimates.  Let $\mathcal{G}_h$ ($0<h<1$) be a shape  regular  triangulation of the domain $\bar{\mathcal{D}}$ into triangles  $K$, 
with $h=\max_{K\in \mathcal{G}_h}h_{K},$ where $h_{K}$ denotes the diameter  of $K.$  
 Denote  by $S_h\subset H^1$  the FE space 
of continuous piecewise linear functions over the triangulation $\mathcal{G}_h$, 
and  $\dot S_h =\{\chi\in S_h:(\chi,1)=0\}$.
Then, the  semidiscrete problem reads: find $u_h(t)$, $w_h(t)\in \dot S_h$ such that 
\begin{equation}\label{3.1semi}
\left\{\begin{array}{ll}
(\cop u_h,\chi)+(\nabla w_h,\nabla\chi)=\partial_t^{-\gamma}P_h\dot W(t) & \forall \chi \in \dot S_h,\; t>0,\\
(w_h,\chi)=(\nabla u_h,\nabla\chi)+(\phi(u_h),\chi) & \forall \chi \in \dot S_h,\; t>0,\\
u_h(0)=u_{0h}, & \\
\end{array}\right.
\end{equation}
where $P_h:H\rightarrow \dot S_h$ is the orthogonal projection and $u_{0h}=P_h u_0$. Upon introducing the discrete operator $A_h:\dot S_h\to \dot S_h$ defined by 
\begin{equation*} \label{dL}
(A_h\varphi,\chi)=(\nabla \varphi,\nabla \chi)  \quad \forall \varphi,\chi\in \dot S_h,
\end{equation*}
the system of equations \eqref{3.1semi} is equivalent to
\begin{equation}\label{3.3}
\cop u_h+A_h^2u_h+A_hP_h\phi(u_h)=\partial_t^{-\gamma} P_h\dot W(t),\quad t>0,\quad u_h(0)=u_{0h}\in \dot S_h.
\end{equation}
 We observe that $A_h$ is both self-adjoint and positive definite, and for all $\chi \in \dot S_h$, we have $\|A_h^{1/2}\chi\| = \|\nabla \chi\| = \|\chi\|_{\dot H^1}$. Additionally, the relationship $P_h=P_hP$ holds true, given that $\dot S_h$ is a subspace of $\dot H$. We shall assume that $P_h$ remains bounded with respect to the $\dot H^1$-norm:
\begin{equation*}\label{Ps}
\|A_h^{1/2} P_h v\|\leq c\|A^{1/2}  v\|, \quad v\in \dot H^1.
\end{equation*}
This assumption remains applicable, particularly in cases where the mesh $\mathcal{G}_h$ is quasi-uniform. 

Let ${A_h^{-1}}: \dot S_h \to \dot S_h$ represent the inverse of $A_h$. We extend $A_h^{-1}$ to the entire  space $H$ by $A_h^{-1}f = A_h^{-1}P_h f$ for $f \in H$ so that $v_h=A_h^{-1}f$ if and only if $A_hv_h=P_hf$. In other words, 
$$
v_h\in \dot S_h,\quad (\nabla v_h, \nabla \chi)=(f, \chi),\quad \forall \chi \in \dot S_h.
$$ 
Thus, $A_h^{-1}$ is self-adjoint, positive semidefinite on $H$, and positive definite on $\dot S_h$. Additionally, it is worth recalling the established fact (see \cite{Larsson-2006}) that
\begin{equation}\label{Ah}
\|A_h^{-1/2}P_hv\|\leq c\|v\|_{\dot H^{-1}},\quad v\in H.
\end{equation}
Now,  by the Duhamel's principle, the solution of \eqref{3.3} is represented by  
\begin{equation}\label{form-1d}
u_h(t)=E_{1-\alpha,h}(t)P_hu_0-\int_0^t {  E}_{0,h}(t-s)A_hP_h\phi(u_h(s))\,ds+\int_0^tE_{\gamma,h}(t-s)P_hdW(s),
\end{equation}
where $E_{m,h}(t):\dot S_h\to\dot S_h$ is given by
$$
 E_{m,h}(t) = \frac{1}{2\pi i}\int_{\Gamma_{\theta,\delta}}e^{zt} z^{-m}(z^\alpha I+A_h^2)^{-1} \,dz.
$$
Since  $A_h$ is selfadjoint and positive definite,  the estimates
in Lemmas \ref{eE}   remains valid for $A_h$,  uniformly in $h$.

For the error analysis, we introduce the operators $K_h(z)=(z^\alpha I+A_h^2)^{-1}P_h-(z^\alpha I +A^2)^{-1}$ and $\bar {K}_h(z)= (z^\alpha I+A_h^2)^{-1}A_hP_h-(z^\alpha I +A^2)^{-1}A$. Some estimates related to these operators are presented in the following lemmas. Their proofs are given in  \cite[Lemma 4.1, Lemma 4.2]{MK-2021}.
\begin{lemma} \label{S_h} The following estimate holds for all $z\in\Sigma_\theta$,
\begin{equation*}
\|K_h(z)v\|+h\|\nabla K_h(z)v\| \leq ch^2 |z|^{-\alpha(1+\nu/2)/2}\left\|v\right\|_{\dot{H}^\nu}, \quad\nu\in[0,2].
\end{equation*}
\end{lemma}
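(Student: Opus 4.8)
The plan is to circumvent the difficulty caused by the appearance of $A_h^2$ (the square of the discrete Laplacian) rather than a genuine discretization of the biharmonic operator by \emph{factoring} the biharmonic resolvent into a product of two shifted Laplacian resolvents. Since $z\neq0$, put $\mu_\pm=\pm iz^{\alpha/2}$; then on $\dot H$ and on $\dot S_h$ respectively one has $z^\alpha I+A^2=(A-\mu_+)(A-\mu_-)$ and $z^\alpha I+A_h^2=(A_h-\mu_+)(A_h-\mu_-)$, so partial fractions give
\[
(z^\alpha I+A^2)^{-1}=\frac{1}{2iz^{\alpha/2}}\big[(A-\mu_+)^{-1}-(A-\mu_-)^{-1}\big],
\]
together with the analogous identity for $A_h$. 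Consequently $K_h(z)=\tfrac{1}{2iz^{\alpha/2}}\big[G_h(\mu_+)-G_h(\mu_-)\big]$, where $G_h(\mu):=(A_h-\mu)^{-1}P_h-(A-\mu)^{-1}$ is the \emph{second-order} finite element resolvent error. First I would check that for $z\in\Sigma_\theta$ the shifts lie in a fixed subsector bounded away from the nonnegative real axis: indeed $\arg\mu_+\in(\tfrac\pi2-\tfrac{\alpha\theta}2,\tfrac\pi2+\tfrac{\alpha\theta}2)$, which stays strictly inside $(0,\pi)$ since $\alpha<1$, and similarly $\mu_-$ lies in the lower half-plane. Hence the standard resolvent bounds $\|A^s(A-\mu)^{-1}\|\le C|\mu|^{s-1}$ ($s\in[0,1]$) and their discrete counterparts hold uniformly in $h$ and $z$.

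The core of the argument is then the single estimate
\[
\|G_h(\mu)v\|+h\|\nabla G_h(\mu)v\|\le Ch^2|\mu|^{-\nu/2}\|v\|_{\dot H^\nu},\qquad \nu\in[0,2].
\]
To prove it I set $u=(A-\mu)^{-1}v$, $u_h=(A_h-\mu)^{-1}P_hv$, and split $u_h-u=(u_h-\Pi_hu)+(\Pi_hu-u)=:\theta+\rho$, with $\Pi_h$ the Ritz projection. The Ritz part obeys the classical bounds $\|\rho\|+h\|\nabla\rho\|\le Ch^2\|u\|_{\dot H^2}=Ch^2\|Au\|$, and here $\|Au\|=\|A^{1-\nu/2}(A-\mu)^{-1}A^{\nu/2}v\|\le C|\mu|^{-\nu/2}\|v\|_{\dot H^\nu}$ by the resolvent bound with $s=1-\nu/2\in[0,1]$. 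For $\theta\in\dot S_h$, using the identity $A_h\Pi_h=P_hA$ one computes $(A_h-\mu)\theta=-\mu(P_h-\Pi_h)u$, so $\theta=-\mu(A_h-\mu)^{-1}(P_h-\Pi_h)u$; the discrete resolvent bound together with $\|(P_h-\Pi_h)u\|\le Ch^2\|Au\|$ yields the $L^2$ estimate, while $\|\nabla\theta\|\le C\|\nabla(P_h-\Pi_h)u\|\le Ch\|Au\|$ yields the gradient estimate. Combining $\theta$ and $\rho$ gives the displayed bound for $G_h$.

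Finally, inserting the $G_h$ estimate into $K_h(z)=\tfrac{1}{2iz^{\alpha/2}}[G_h(\mu_+)-G_h(\mu_-)]$ and using $|\mu_\pm|=|z|^{\alpha/2}$ gives $\|K_h(z)v\|+h\|\nabla K_h(z)v\|\le C|z|^{-\alpha/2}h^2|z|^{-\alpha\nu/4}\|v\|_{\dot H^\nu}=Ch^2|z|^{-\alpha(1+\nu/2)/2}\|v\|_{\dot H^\nu}$, which is the claim. I expect the main obstacle to be twofold: confirming that the complex shifts $\mu_\pm$ remain in a sector on which both continuous and discrete resolvent estimates hold with constants independent of $z$ and $h$ (this is exactly where the restriction to $\Sigma_\theta$ and $\alpha\in(0,1)$ is used), and obtaining the gradient estimate for $\theta$, which requires the $\dot H^1$-stability of $P_h$ assumed in the paper, in the form $\|\nabla(I-P_h)u\|\le Ch\|Au\|$. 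A more direct alternative tests the error equation $z^\alpha\theta+A_h^2\theta=z^\alpha(P_h-\Pi_h)w-A_h(P_h-\Pi_h)(Aw)$ with $w=(z^\alpha I+A^2)^{-1}v$ and applies \eqref{res3} termwise; however this requires $Aw\in\dot H^1$ and hence behaves less uniformly for $\nu<2$, whereas the factorization treats the whole range $\nu\in[0,2]$ at once.
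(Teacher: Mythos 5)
Your proof is correct, but note that the present paper does not actually prove Lemma \ref{S_h}: it defers entirely to \cite[Lemma 4.1, Lemma 4.2]{MK-2021}, so the only possible comparison is with that cited argument rather than with anything in this text. Your route---factoring $z^\alpha I+A^2=(A-\mu_+)(A-\mu_-)$ with $\mu_\pm=\pm i z^{\alpha/2}$, reducing $K_h(z)$ by partial fractions to the second-order resolvent error $G_h(\mu)=(A_h-\mu)^{-1}P_h-(A-\mu)^{-1}$, and then estimating $G_h$ by a Ritz-projection splitting---is the standard device for coping with $A_h^2$ being the square of the discrete Laplacian rather than a discrete biharmonic operator, and is essentially the technique used in the Cahn--Hilliard(--Cook) FEM literature the authors build on (cf.\ \cite{11}); the cited proof is in all likelihood of the same type. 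The steps that make your argument go through all check out: (i) for $z\in\Sigma_\theta$ with $\theta<\pi$ and $\alpha<1$, $\arg\mu_\pm$ stays at angular distance at least $\pi/2-\alpha\theta/2>0$ from $[0,\infty)$, so $\|A^s(A-\mu)^{-1}\|\le C|\mu|^{s-1}$, $s\in[0,1]$, and its discrete analogue hold with constants independent of $h$ and $z$; (ii) the error equation $(A_h-\mu)\theta=-\mu(P_h-\Pi_h)u$ is correct, and with $\|\mu(A_h-\mu)^{-1}\|\le C$ and $\|(P_h-\Pi_h)u\|\le Ch^2\|Au\|\le Ch^2|\mu|^{-\nu/2}\|v\|_{\dot H^\nu}$ it gives the $L^2$ part; (iii) the gradient bound for $\theta$ must be done exactly as you indicate, by commuting $A_h^{1/2}$ with $(A_h-\mu)^{-1}$ so that only the uniformly bounded operator $\mu(A_h-\mu)^{-1}$ acts on $A_h^{1/2}(P_h-\Pi_h)u$ (the cruder bound $\|A_h^{1/2}(A_h-\mu)^{-1}\|\le C|\mu|^{-1/2}$ would leave an unusable factor $|\mu|^{1/2}h^2$), and this is also precisely where the paper's assumed $\dot H^1$-stability of $P_h$ enters; (iv) the bookkeeping $|\mu_\pm|=|z|^{\alpha/2}$ combines the prefactor $|z|^{-\alpha/2}$ with $|\mu|^{-\nu/2}=|z|^{-\alpha\nu/4}$ to give exactly $h^2|z|^{-\alpha(1+\nu/2)/2}$. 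The payoff of your formulation is that the entire range $\nu\in[0,2]$ is handled at once through the single exponent $s=1-\nu/2$ in the continuous resolvent bound, with no separate interpolation argument needed.
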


\begin{lemma}\label{barS_h}  The following estimate holds for all $z\in\Sigma_\theta$  
\begin{equation*}
\|\bar{K}_h(z)v\|+h\|\nabla \bar{K}_h(z)v\| \leq ch^2 |z|^{-\alpha \textcolor{red}{\nu}/4}\left\|v\right\|_{\dot{H}^\nu},{\quad\nu\in[0,2]}.
\end{equation*}
\end{lemma}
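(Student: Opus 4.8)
The plan is to follow the same strategy used for the companion bound in Lemma \ref{S_h}, namely to pass from the operators to the resolvent problems they solve. Writing $R(z)=(z^\alpha I+A^2)^{-1}$ and $R_h(z)=(z^\alpha I+A_h^2)^{-1}$, I set $\zeta=R(z)v$ and $\zeta_h=R_h(z)P_hv$, so that, using that $A_h$ commutes with $R_h(z)$ and $A$ with $R(z)$,
\[
\bar K_h(z)v=R_h(z)A_hP_hv-R(z)Av=A_h\zeta_h-A\zeta .
\]
I would then introduce the Ritz (elliptic) projection $\Pi_h:\dot H^1\to\dot S_h$, characterised by $(\nabla\Pi_h\psi,\nabla\chi)=(\nabla\psi,\nabla\chi)$ for all $\chi\in\dot S_h$, which satisfies the orthogonality relation $A_h\Pi_h=P_hA$ on $D(A)$ and the standard approximation bounds $\|(I-\Pi_h)\psi\|+h\|\nabla(I-\Pi_h)\psi\|\leq ch^s\|\psi\|_{\dot H^s}$, $s\in[1,2]$. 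Inserting $A_h\Pi_h\zeta=P_hA\zeta$ gives the splitting
\[
\bar K_h(z)v=A_h(\zeta_h-\Pi_h\zeta)-(I-P_h)A\zeta=:A_h\theta_h-(I-P_h)A\zeta .
\]

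The key step is to derive the defining equation for $\theta_h\in\dot S_h$ by subtracting the weak forms of the discrete and continuous resolvent problems, rewriting $(A^2\zeta,\chi)=(\nabla A\zeta,\nabla\chi)=(A_h\Pi_hA\zeta,\chi)$ and again using the Ritz relation. This produces
\[
(z^\alpha I+A_h^2)\theta_h=z^\alpha(P_h-\Pi_h)\zeta-A_h(P_h-\Pi_h)A\zeta ,
\]
hence $A_h\theta_h=z^\alpha A_hR_h(z)(P_h-\Pi_h)\zeta-A_h^2R_h(z)(P_h-\Pi_h)A\zeta$. Each term is then bounded by the resolvent smoothing inequality \eqref{res3} applied to $A_h$ (valid uniformly in $h$ since $A_h$ is self-adjoint and positive definite), i.e.\ $\|A_hR_h(z)g\|\leq M|z|^{-\alpha/2}\|g\|$ and $\|A_h^2R_h(z)g\|\leq M\|g\|$, combined with the second-order projection bound $\|(P_h-\Pi_h)\psi\|\leq ch^2\|\psi\|_{\dot H^2}$ and the resolvent regularity estimates $\|\zeta\|_{\dot H^2}=\|AR(z)v\|\leq c|z|^{-\alpha(1/2+\nu/4)}\|v\|_{\dot H^\nu}$ and $\|A\zeta\|_{\dot H^2}=\|A^2R(z)v\|\leq c|z|^{-\alpha\nu/4}\|v\|_{\dot H^\nu}$, both read off from \eqref{res3} for $A$ after shifting $\nu/2$ derivatives onto $v$. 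The growing factors $|z|^\alpha$ and $|z|^0$ coming from $A_hR_h$ and $A_h^2R_h$ are exactly absorbed by the decay of $\zeta$, so all contributions (including $(I-P_h)A\zeta$, handled by $L^2$-projection approximation) collapse to $ch^2|z|^{-\alpha\nu/4}\|v\|_{\dot H^\nu}$.

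For the gradient part I would bound $\|\nabla(I-P_h)A\zeta\|\leq ch\|A^2\zeta\|$, and for $\nabla A_h\theta_h$ either invoke that $A_h\theta_h\in\dot S_h$ with the inverse inequality $\|\nabla A_h\theta_h\|\leq ch^{-1}\|A_h\theta_h\|$ on the quasi-uniform mesh, or move a half power of $A_h$ onto the projection differences via $\|\nabla(P_h-\Pi_h)\psi\|\leq ch\|\psi\|_{\dot H^2}$ together with $\|A_hR_h(z)\cdot\|$; in both cases multiplying by $h$ restores the order $h^2$ with the same $|z|$-exponent.

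The genuine obstacle is the consistency term $A_h(P_h-\Pi_h)A\zeta$, which is precisely the manifestation of the $A_h^2$-versus-$(A^2)_h$ mismatch flagged in the introduction: it must be shown to be $O(h^2)$ with no loss in the power of $|z|$. This is what forces one to use the full strength $\mu=2$ of \eqref{res3} (so that $A_h^2R_h(z)$ is merely bounded, with no decay to spare) paired with the \emph{second-order} $L^2$ estimate for the difference of the two projections applied to the smooth quantity $A\zeta$. Arranging the algebra so that the $|z|$-exponent lands exactly on $-\alpha\nu/4$, uniformly for $\nu\in[0,2]$ and across the whole sector $\Sigma_\theta$, is the delicate bookkeeping at the heart of the proof.
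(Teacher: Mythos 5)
Your proof is correct, and the key computation checks out: with $\theta_h=\zeta_h-\Pi_h\zeta$ the error equation $(z^\alpha I+A_h^2)\theta_h=z^\alpha(P_h-\Pi_h)\zeta-A_h(P_h-\Pi_h)A\zeta$ is valid (both $\zeta=R(z)v$ and $A\zeta$ lie in $D(A)$, so the Ritz identity $A_h\Pi_h=P_hA$ applies), and pairing \eqref{res3} with $\mu=1$, resp.\ $\mu=2$, against $\|AR(z)v\|\leq M|z|^{-\alpha(1/2+\nu/4)}\|v\|_{\dot H^\nu}$, resp.\ $\|A^2R(z)v\|\leq M|z|^{-\alpha\nu/4}\|v\|_{\dot H^\nu}$, makes every term land on $ch^2|z|^{-\alpha\nu/4}\|v\|_{\dot H^\nu}$ exactly as you claim, with the gradient part recovered via the assumed quasi-uniformity ($H^1$-stability of $P_h$ or the inverse inequality). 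Note that the paper itself gives no argument here but simply cites \cite[Lemma 4.2]{MK-2021}; your Ritz-projection/resolvent-smoothing argument is the standard route for such estimates and is essentially the one used in that reference.
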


Further, we define the operators $F_{1-\alpha,h}(t):=E_{1-\alpha,h}(t)P_h-E_{1-\alpha}(t)$ and $\bar{F}_{0,h}(t):={E}_{0,h}(t)A_hP_h-{E}_{0}(t)A$. Then, by Lemmas \ref{S_h} and \ref{barS_h}, we have  
\begin{equation}\label{bar-F}
\left\|\bar{F}_{0,h}(t)v\right\| +h\left\|\nabla\bar{F}_{0,h}(t)v\right\|\leq c\int_{\Gamma_{\theta,\delta}}e^{Re(z)t} \left( \left\|\bar{K}_hv\right\|+h\left\|\nabla \bar{K}_hv\right\|\right)  \,|dz|\leq ch^2 {t^{\alpha\nu/4-1}\left\|v\right\|_{\dot{H}^\nu},\quad v\in \dot{H}^\nu},
\end{equation}
and 
\begin{equation}\label{F_h}
\left\|F_{1-\alpha,h}(t)v\right\|+h \left\|\nabla F_{1-\alpha,h}(t)v\right\|
\leq ch^2 t^{-\alpha(2-\nu)/4}\left\|v\right\|_{\dot{H}^\nu},\quad v\in \dot{H}^\nu,
\end{equation}
for $\nu\in[0,2]$. Finally, we  let $F_{\gamma,h}(t):=E_{\gamma,h}(t)P_h-E_{\gamma}(t)$. Then we have the following result.
\begin{lemma} Let $s\in [0,1]$ and $r,\nu\in [0,2]$. Then 
\begin{equation}\label{Fgamma}
\|A^{s/2}F_{\gamma,h}v \|\leq c h^{2-s-r} t^{\alpha (r+2+\nu)/4+\gamma-1}\|A^{\nu/2}v\|, 
\end{equation}
where  $r+s\leq 2$ and $r+\nu\leq 2$.
%
\end{lemma}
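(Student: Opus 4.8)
The plan is to pass to the contour representation and reduce everything to a pointwise bound for the error operator $K_h(z)=(z^\alpha I+A_h^2)^{-1}P_h-(z^\alpha I+A^2)^{-1}$. Since $F_{\gamma,h}(t)=E_{\gamma,h}(t)P_h-E_\gamma(t)$ and the two operators differ only through the resolvent, subtracting their defining integrals gives
\begin{equation*}
A^{s/2}F_{\gamma,h}(t)v=\frac{1}{2\pi i}\int_{\Gamma_{\theta,\delta}}e^{zt}z^{-\gamma}A^{s/2}K_h(z)v\,dz.
\end{equation*}
Hence, once the bound $\|A^{s/2}K_h(z)v\|\le c\,h^{2-s-r}|z|^{-\alpha(r+2+\nu)/4}\|A^{\nu/2}v\|$ is established on $\Sigma_\theta$, \eqref{Fgamma} follows by the same computation already behind \eqref{bar-F} and \eqref{F_h}: taking norms under the integral and choosing $\delta\sim1/t$ yields $\int_{\Gamma_{\theta,\delta}}|e^{zt}|\,|z|^{-\gamma-\alpha(r+2+\nu)/4}\,|dz|\le c\,t^{\gamma+\alpha(r+2+\nu)/4-1}$, and since the exponent $\gamma+\alpha(r+2+\nu)/4$ is positive the integral converges and produces the stated power of $t$.

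\textbf{The projection term.} For the resolvent bound I would start from the identity (with $R(z)=(z^\alpha I+A^2)^{-1}$, $R_h(z)=(z^\alpha I+A_h^2)^{-1}$ and $w=R(z)v$)
\begin{equation*}
K_h(z)v=(P_h-I)R(z)v+R_h(z)\big(P_hA^2-A_h^2P_h\big)R(z)v=:\mathrm{I}+\mathrm{II},
\end{equation*}
obtained by inserting $(z^\alpha I+A_h^2)R_h(z)=I$ on $\dot S_h$ and using $A^2w=v-z^\alpha w$. It is convenient to set $m:=2-r$, so that the hypotheses $r+s\le2$ and $r+\nu\le2$ read $\max\{s,\nu\}\le m\le2$. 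For the projection term $\mathrm{I}$ the $\dot H^s$-stability and approximation properties of $P_h$ give $\|A^{s/2}\mathrm{I}\|\le c\,h^{m-s}\|A^{m/2}R(z)v\|$, while \eqref{res3}, applied with $\mu=(m-\nu)/2\in[0,2]$ (admissible precisely because $m\ge\nu$), gives $\|A^{m/2}R(z)v\|\le c\,|z|^{-\alpha(4-m+\nu)/4}\|A^{\nu/2}v\|$. As $h^{m-s}=h^{2-s-r}$ and $\alpha(4-m+\nu)/4=\alpha(r+2+\nu)/4$, the term $\mathrm{I}$ already reproduces the target; this is exactly where the two constraints enter, $s\le m$ guaranteeing the projection estimate and $\nu\le m$ the resolvent power. (The case $r=0$, i.e. $m=2$, is simply Lemma \ref{S_h} interpolated in $s$; the whole content is the gain obtained for $r>0$.)

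\textbf{Main obstacle: the consistency term.} The delicate step, which I expect to dominate the work, is the term $\mathrm{II}$ carrying the operator defect $P_hA^2-A_h^2P_h$ — precisely the discrepancy between $(A_h)^2$ and $(A^2)_h$ flagged in the introduction after \cite{elliot-1989}. It cannot be treated as a lower-order remainder, since in the finite element setting the nonconformity error is of the same order as the approximation error, so it must be shown to carry the \emph{same} powers $h^{m-s}$ and $|z|^{-\alpha(4-m+\nu)/4}$. My plan is to bound it using discrete resolvent smoothing for $R_h(z)$ (the analogue of \eqref{res3} for $A_h$, uniform in $h$, together with $\|A^{s/2}\chi\|\le c\|A_h^{s/2}\chi\|$ on $\dot S_h$ for $s\in[0,1]$) applied to $(P_hA^2-A_h^2P_h)R(z)v$ measured in an appropriate discrete negative norm, the relevant biharmonic consistency estimates being those already underlying Lemmas \ref{S_h} and \ref{barS_h}. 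The subtle accounting is that a crude splitting, pushing $A^{s/2}$ and the $A_h$-powers onto $R_h(z)$ separately, would force the stronger (and false) requirement $m\ge s+\nu$; a sharper argument is needed so that the $h$- and $|z|$-scalings match term $\mathrm{I}$ under the sole hypothesis $\max\{s,\nu\}\le m\le2$. This is the technical heart of the proof.
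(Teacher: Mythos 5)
Your reduction of \eqref{Fgamma} to a resolvent estimate for $K_h(z)$ via the contour representation is correct, and it is indeed how the case $r=0$ works; but the proposal stops exactly where the proof has to be carried out. The consistency term $\mathrm{II}=R_h(z)\bigl(P_hA^2-A_h^2P_h\bigr)R(z)v$ is never estimated: you note yourself that the crude splitting would force the false condition $m\ge s+\nu$ and that ``a sharper argument is needed.'' Since that sharper argument is precisely the content of your claimed bound for $r>0$, what you have written is a plan whose central step is missing, not a proof.

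The idea you are missing is that no refined consistency analysis is needed at all: the constraints $r+s\le 2$ and $r+\nu\le 2$ are interpolation-shaped, and the paper proves the lemma by interpolating between two endpoint estimates that are already available. For $r=0$, the contour integral together with Lemma \ref{S_h} (interpolated in $s\in[0,1]$) gives $\|A^{s/2}F_{\gamma,h}(t)v\|\le ch^{2-s}\,t^{\alpha(2+\nu)/4+\gamma-1}\|A^{\nu/2}v\|$, exactly as you observe. For $r=2$ (the constraints then force $s=\nu=0$, so $h^{2-s-r}=1$), no finite element error estimate is needed whatsoever: the triangle inequality and the smoothing property of Lemma \ref{eE}, valid for $E_{\gamma,h}(t)P_h$ uniformly in $h$ and for $E_{\gamma}(t)$, give $\|F_{\gamma,h}(t)v\|\le ct^{\alpha+\gamma-1}\|v\|$. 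Interpolating between these endpoints yields \eqref{Fgamma} on the whole admissible range of $(r,s,\nu)$, and the operator defect $P_hA^2-A_h^2P_h$ that you single out as the ``technical heart'' never has to be touched. The same observation repairs your own plan in one line: your conjectured resolvent bound $\|A^{s/2}K_h(z)v\|\le ch^{2-s-r}|z|^{-\alpha(r+2+\nu)/4}\|A^{\nu/2}v\|$ follows by interpolating between Lemma \ref{S_h} ($r=0$) and the trivial bound $\|K_h(z)v\|\le c|z|^{-\alpha}\|v\|$ obtained from \eqref{res3} and its discrete analogue ($r=2$). Conceptually, increasing $r$ \emph{weakens} the spatial factor in exchange for a milder temporal singularity; such a trade-off is what interpolation delivers for free, and it does not call for a sharper treatment of the $(A_h)^2$ versus $(A^2)_h$ discrepancy.
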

Now, we introduce  the intermediate solution $\tilde u_h$ defined by 
\begin{equation}\label{3.1inter}
\left\{\begin{array}{ll}
(\cop \tilde u_h,\chi)+(\nabla \tilde w_h,\nabla\chi)=\partial_t^{-\gamma} P_h\dot W(t) & \forall \chi \in \dot S_h,\; t>0,\\
(\tilde w_h,\chi)=(\nabla \tilde u_h,\nabla\chi)+(\phi_h(u),\chi) & \forall \chi \in \dot S_h,\; t>0,\\
\tilde u_h(0)=P_h u_{0}. & \\
\end{array}\right.
\end{equation}
Equivalently, $\tilde u_h$ satisfies
\begin{equation}\label{3.33}
\cop \tilde u_h+A_h^2\tilde u_h+A_hP_h\phi(u)=\partial_t^{-\gamma} P_h\dot W(t),\quad t>0,\quad \tilde u_h(0)=P_h u_{0}\in \dot S_h.
\end{equation}
In our analysis, we shall assume that 
\begin{equation}\label{u_hBound}
\sup_{t\in[0,T]} \mathbb{E} \|A_h u_h(t)\|_{\dot H}^{16p}<C \quad \mbox{and}\quad  \sup_{t\in[0,T]} \mathbb{E} \|A_h \tilde u_h(t)\|_{\dot H}^{16p}<C.
\end{equation} 

To derive a bound for the error $e(t):=u_h(t)-u(t)$, we consider the splitting $e(t)= e_1(t)+e_2(t)$, where $ e_1(t):=u_h-\tilde u_h$ and $e_2(t):=\tilde u_h-u $. We start by showing  a bound for  $e_2(t)$.
\begin{lemma}\label{thm:6.5}
Let $u_0\in L^{16p}(\Omega;\dot H^2)$, $p\geq 1$, and choose $u_{0h}=P_hu_0$. Let 
 $u$ and $\tilde u_h$ be the solutions of \eqref{main} and \eqref{3.33} over $[0,T]$, respectively. If  
 $\|A^{(\beta-2)/2}\|_{{\cal L}_2^0}<\infty$ and $\eta>0$, then
 \begin{equation*}\label{6.11-n}
\|e_2(t)\|_{L^{{2p}}(\Omega,H)}\leq c h^{\min\{2,\beta\}-r},\quad t\in (0,T],
\end{equation*}
where 
 \begin{eqnarray} \label{rr}
r=\begin{cases}
\dfrac{4}{\alpha}(\frac{1-\alpha}{2}-\gamma )+\epsilon & \text{if }\gamma+\alpha/2< \frac{1}{2}\\ 
  \epsilon & \text{if }\gamma+\alpha/2= \frac{1}{2},\\
  0 & \text{if }\gamma+\alpha/2> \frac{1}{2},
\end{cases}
\end{eqnarray}
if $\beta\in[1,2]$, and 
\begin{eqnarray} \label{rr1}
r=\begin{cases}
\dfrac{4}{\alpha}(\frac{2-\alpha\beta}{4}-\gamma )+\epsilon & \text{if }\gamma+\alpha\beta/4< \frac{1}{2}\\ 
  \epsilon & \text{if }\gamma+\alpha\beta/4= \frac{1}{2},\\
  0 & \text{if }\gamma+\alpha\beta/4> \frac{1}{2},
\end{cases}
\end{eqnarray}
if  $\beta\in(2,3]$. Here $c$ may depend on $T$.
\end{lemma}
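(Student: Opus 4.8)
The plan is to subtract the two mild representations \eqref{form-1} and \eqref{form-1d}, exploiting the fact that the intermediate problem \eqref{3.33} carries the \emph{same} nonlinear forcing $\phi(u)$ as the exact equation. Consequently $e_2$ is governed by a \emph{linear} error and splits as
\begin{equation*}
e_2(t)=F_{1-\alpha,h}(t)u_0-\int_0^t\bar F_{0,h}(t-s)\phi(u(s))\,ds+\int_0^t F_{\gamma,h}(t-s)\,dW(s)=:J_1+J_2+J_3,
\end{equation*}
where $F_{1-\alpha,h}$, $\bar F_{0,h}$ and $F_{\gamma,h}$ are the error operators introduced before \eqref{bar-F}, \eqref{F_h} and \eqref{Fgamma}. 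I would bound each term in $L^{2p}(\Omega;H)$ separately and show that the first two are of order $h^2$, while the stochastic term $J_3$ dictates the rate $h^{\min\{2,\beta\}-r}$.

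For the deterministic terms, $J_1$ is controlled by \eqref{F_h} with $\nu=2$, giving $\|J_1\|_{L^{2p}(\Omega;H)}\le ch^2\|u_0\|_{L^{2p}(\Omega;\dot H^2)}\le ch^2$ uniformly in $t$. For $J_2$ I would apply \eqref{bar-F} with $\nu=1$ together with the bound on $\|P\phi(u)\|_{\dot H^1}=\|\nabla P\phi(u)\|$ obtained from \eqref{C_3} (taking $v=0$) and the standing regularity $\sup_{s}\|u(s)\|_{L^{8p}(\Omega;\dot H^2)}<\infty$; the resulting time weight $(t-s)^{\alpha/4-1}$ is integrable, so $\|J_2\|_{L^{2p}(\Omega;H)}\le ch^2\int_0^t(t-s)^{\alpha/4-1}ds\le ch^2$. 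Since $\min\{2,\beta\}-r\le 2$, both contributions are absorbed into the claimed rate for $h<1$.

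The crux is $J_3$. Applying Burkholder's inequality \eqref{burk} reduces it to estimating $\big(\int_0^t\|F_{\gamma,h}(t-s)\|_{\mathcal{L}_2^0}^2\,ds\big)^{1/2}$, and I would factor the Hilbert--Schmidt norm through the noise assumption by writing $F_{\gamma,h}Q^{1/2}=\big(F_{\gamma,h}A^{(2-\beta)/2}\big)\big(A^{(\beta-2)/2}Q^{1/2}\big)$, so that $\|F_{\gamma,h}\|_{\mathcal{L}_2^0}\le\|F_{\gamma,h}A^{(2-\beta)/2}\|_{L(H)}\,\|A^{(\beta-2)/2}\|_{\mathcal{L}_2^0}$. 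The operator factor is then estimated from \eqref{Fgamma} with $s=0$ and a free parameter $\kappa$ that trades spatial order for a milder temporal singularity. When $\beta\in(2,3]$ the power $A^{(2-\beta)/2}$ is smoothing, so \eqref{Fgamma} with $\nu=\beta-2\in[0,1]$ applies directly and yields the factor $ch^{2-\kappa}(t-s)^{\alpha(\kappa+\beta)/4+\gamma-1}$ subject to $\kappa\le 4-\beta$. When $\beta\in[1,2]$ the required input smoothness $\nu=\beta-2$ is negative, so instead I would use \eqref{Fgamma} with $\nu=0$ and pay for the roughening $A^{(2-\beta)/2}$ through the finite element error for negative-order data (equivalently an inverse estimate), which degrades the spatial order from $h^2$ to $h^{\beta}$ and produces the factor $ch^{\beta-\kappa}(t-s)^{\alpha(\kappa+2)/4+\gamma-1}$ with $\kappa\le 2$. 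This degradation is precisely what turns the attainable rate into $h^{\min\{2,\beta\}-r}$, and I expect it to be the main obstacle of the proof.

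Finally I would integrate in $s$. Integrability of $(t-s)^{2[\alpha(\kappa+\beta)/4+\gamma-1]}$ (respectively with $\alpha(\kappa+2)/4$ when $\beta<2$) over $(0,t)$ requires $\alpha(\kappa+\beta)/4+\gamma>1/2$ (respectively $\alpha(\kappa+2)/4+\gamma>1/2$), and the standing hypothesis $\eta>0$ guarantees this is achievable for an admissible $\kappa$. Minimising $\kappa$ subject to this threshold maximises the rate and identifies $\kappa$ with the $r$ of the statement: when the threshold already holds at $\kappa=0$, i.e.\ $\gamma+\alpha\beta/4>1/2$ (respectively $\gamma+\alpha/2>1/2$), one takes $r=0$ and obtains $h^{\min\{2,\beta\}}$; otherwise the smallest admissible $\kappa$ gives the stated positive $r$, with the boundary case contributing the $\epsilon$-loss. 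Collecting the bounds for $J_1$, $J_2$ and $J_3$ completes the estimate.
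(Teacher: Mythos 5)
Your proposal reproduces the paper's proof essentially step for step: the same splitting of $e_2$ into $F_{1-\alpha,h}(t)u_0$, the convolution of $\bar F_{0,h}$ against $P\phi(u)$, and the stochastic convolution against $F_{\gamma,h}$, bounded respectively via \eqref{F_h}, \eqref{bar-F} with \eqref{C_3}, and Burkholder plus \eqref{Fgamma}, followed by the same optimization of the free parameter (your $\kappa$, the paper's $r$) under the integrability threshold, split into the cases $\beta\in[1,2]$ and $\beta\in(2,3]$. The only cosmetic difference is in the case $\beta\in[1,2]$: no inverse estimate or negative-order finite element bound is needed, since the factor $ch^{\beta-\kappa}(t-s)^{\alpha(\kappa+2)/4+\gamma-1}$ you want is exactly \eqref{Fgamma} with $s=2-\beta$, $\nu=0$, combined with the self-adjointness of $F_{\gamma,h}(t)$ to move $A^{(2-\beta)/2}$ to the other side of the operator.
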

\begin{proof}
In view of  \eqref{main},  \eqref{3.33} and the property $P_hP=P_h$, we may write $e_2(t)$ as 
\begin{equation*}\label{intt}
e_2(t)= F_{1-\alpha,h}(t)u_0
-\int_0^t  \bar{F}_{0,h}(t-s) P\phi(u(s))\,ds
+\int_0^t {F}_{\gamma,h}(t-s) dW(s).
\end{equation*} 
Using \eqref{bar-F}, \eqref{F_h} and {\eqref{C_3}}, we then deduce
\begin{eqnarray*}
\|e_2(t)\|_{L^{{2p}}(\Omega,H)} &\leq &ch^2\|u_0\|_{L^{2p}(\Omega,\dot H^2)}
+\int_{0}^{t}\|\bar{F}_{0,h}(t-s) P\phi(u(s))\|_{L^{2p}(\Omega,H)}\,ds\\
& &+\int_{0}^{t}\|{F}_{\gamma ,h}(t-s) dW(s)\|_{L^{2p}(\Omega,H)}\,ds\\
&\leq &ch^2\|u_0\|_{L^{2p}(\Omega,\dot H^2)}
+ch^2\int_{0}^{t}(t-s)^{\alpha/4-1} \|\nabla P\phi(u(s))\|_{L^{2p}(\Omega,H)}\,ds\\
& &+\int_{0}^{t}\|{F}_{\gamma ,h}(t-s) dW(s)\|_{L^{2p}(\Omega,H)}\,ds
\end{eqnarray*}
For $\beta\in[1,2]$, we have
\begin{eqnarray*}
\int_{0}^{t}\|{F}_{\gamma ,h}(t-s) dW(s)\|_{L^{2p}(\Omega,H)}\,ds
&\leq & h^{\beta-r}\left(\int_{0}^{t}(t-s)^{2[\alpha(2+r)/4+\gamma-1]}\,ds\right)^{1/2}\|A^{(\beta-2)/2}\|_{\mathcal{L}_2^0}\\
&\leq &  h^{\beta-r} t^{\alpha(2+r)/4+\gamma-1/2}\|A^{(\beta-2)/2}\|_{\mathcal{L}_2^0}.
\end{eqnarray*}
 The choice  of $r$ in \eqref{rr} ensures that  $\alpha(2+r)/4+\gamma-1/2>0$. Thus,  we have 
$\|e_2(t)\|_{L^{2p}(\Omega,H)}  \leq  ch^{\beta-r}.$ 
For $\beta\in(2,3]$, we use \eqref{Fgamma} with $s=0$ and $\nu=\beta-2$ to get
\begin{eqnarray*}
\int_{0}^{t}\|{F}_{\gamma ,h}(t-s) dW(s)\|_{L^{2p}(\Omega,H)}\,ds
&\leq & h^{2-r}\left(\int_{0}^{t}(t-s)^{2[\alpha(r+\beta)/4+\gamma-1]}\,ds\right)^{1/2}\|A^{(\beta-2)/2}\|_{\mathcal{L}_2^0}\\
&\leq &  h^{2-r} t^{\alpha(r+\beta)/4+\gamma-1/2}\|A^{(\beta-2)/2}\|_{\mathcal{L}_2^0},
\end{eqnarray*}
where $r$ is given by \eqref{rr1}.
\end{proof}
In the next lemma we derive an estimate  involving  in the $H^1$-norm of $e_1(t)$.
\begin{lemma}\label{error_integral}
Let $u_h$ and $\tilde u_h$ be a solution of \eqref{3.3} and \eqref{3.33}, respectively, with $u_0\in L^{16p}(\Omega;\dot H^2)$. Then 
\begin{equation}\label{interm-err-integ}
\int_0^t(t-s)^{\alpha-1}\|\nabla e_1(s)\|^2_{L^{2p}(\Omega, \dot H)}\;ds\leq  Ch^{2\min\{2,\beta\}-2r}   ,
\end{equation}
where $r$ is given by \eqref{rr} if  $\beta \in[1,2]$  and \eqref{rr1} if  $\beta \in(2,3]$.
\end{lemma}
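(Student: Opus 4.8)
The plan is to derive the error equation for $e_1$, estimate it in the natural $\dot H^{-1}$-energy of the Cahn--Hilliard operator (which neutralises the unbounded operator $A_h$ standing in front of the cubic term), and then integrate the resulting fractional energy inequality so that the quantity in \eqref{interm-err-integ} surfaces directly. Subtracting \eqref{3.33} from \eqref{3.3} and using $e_1(0)=0$, the error satisfies
\begin{equation*}
\cop e_1+A_h^2e_1+A_hP_h\big(\phi(u_h)-\phi(u)\big)=0,\qquad e_1(0)=0.
\end{equation*}
First I would apply $A_h^{-1}$ and pair with $e_1$ in $H$; equivalently, I would test the equation in the discrete $\dot H^{-1}$ inner product $(\cdot,\cdot)_{-1}:=(A_h^{-1/2}\cdot,A_h^{-1/2}\cdot)$. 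Using the fractional chain-rule inequality $(\cop v,v)\ge \tfrac12\cop\|v\|^2$ with $v=A_h^{-1/2}e_1$, together with $(A_h^2e_1,e_1)_{-1}=\|A_h^{1/2}e_1\|^2=\|\nabla e_1\|^2$, this gives the pathwise inequality
\begin{equation*}
\tfrac12\cop\|A_h^{-1/2}e_1\|^2+\|\nabla e_1\|^2\le \big|(\phi(u_h)-\phi(u),e_1)\big|.
\end{equation*}
The decisive feature of this choice is that the nonlinearity now appears only in the low-order pairing $(\phi(u_h)-\phi(u),e_1)=(A_h^{-1/2}P_h(\phi(u_h)-\phi(u)),A_h^{1/2}e_1)$, so that no derivative of the cubic term is required.

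Next I would bound the nonlinear term using \eqref{Ah} and the negative-norm Lipschitz estimate \eqref{C_3m}:
\begin{equation*}
\big|(\phi(u_h)-\phi(u),e_1)\big|\le c\,\|A^{-1/2}(\phi(u_h)-\phi(u))\|\,\|\nabla e_1\|\le C\big(1+\|u_h\|_{\dot H^1}^2+\|u\|_{\dot H^1}^2\big)\|e\|\,\|\nabla e_1\|,
\end{equation*}
where $e=u_h-u=e_1+e_2$. The crucial point is that $e$ enters here only through its $L^2$-in-space norm $\|e\|$, and \emph{not} through $\|e\|_{\dot H^1}$, so the contribution of $e_2$ can be controlled at the rate furnished by Lemma \ref{thm:6.5}. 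After a Young inequality that absorbs $\|\nabla e_1\|^2$, I obtain
\begin{equation*}
\tfrac12\cop\|A_h^{-1/2}e_1\|^2+\tfrac12\|\nabla e_1\|^2\le C\big(1+\|u_h\|_{\dot H^1}^2+\|u\|_{\dot H^1}^2\big)^2\|e\|^2.
\end{equation*}
Applying the fractional integral $\partial_t^{-\alpha}$ and using the identity $\partial_t^{-\alpha}\cop g=g-g(0)$ with $e_1(0)=0$, the dissipation term becomes exactly $\tfrac{1}{\Gamma(\alpha)}\int_0^t(t-s)^{\alpha-1}\|\nabla e_1(s)\|^2\,ds$, while the pointwise energy $\|A_h^{-1/2}e_1(t)\|^2$ is retained for the Grönwall step.

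It then remains to pass to moments and close. Taking $2p$-th moments (using Hölder together with the a priori bounds \eqref{u_hBound} and the regularity of $u$ to absorb the factor $(1+\|u_h\|_{\dot H^1}^2+\|u\|_{\dot H^1}^2)^2$, which is precisely where the high integrability assumed in \eqref{u_hBound} is needed) and splitting $\|e\|^2\le 2\|e_1\|^2+2\|e_2\|^2$, the $e_2$-part contributes $\int_0^t(t-s)^{\alpha-1}\|e_2(s)\|^2_{L^{2p}(\Omega,H)}\,ds\le Ch^{2\min\{2,\beta\}-2r}$ by Lemma \ref{thm:6.5}, while the $e_1$-part is treated by the interpolation $\|e_1\|^2\le \varepsilon\|\nabla e_1\|^2+C_\varepsilon\|A_h^{-1/2}e_1\|^2$: the first term is absorbed into the dissipation integral and the second, combined with the retained pointwise energy, is removed by a generalised Grönwall inequality with kernel $(t-s)^{\alpha-1}=(t-s)^{-(1-\alpha)}$. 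I expect the main obstacle to be exactly the nonlinear term: the unbounded $A_h$ in front of $\phi$ rules out a direct $\dot H^1$-test, and the resolution is the $\dot H^{-1}$-energy together with \eqref{C_3m}, which keeps $e_2$ in the $L^2$-norm and so matches the convergence rate of Lemma \ref{thm:6.5}. A secondary, purely technical point is the stochastic-moment bookkeeping, where the room left by \eqref{u_hBound} supports the Hölder steps and the Grönwall closure.
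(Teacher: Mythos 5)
Your setup --- the error equation for $e_1$, testing with $A_h^{-1}e_1$, the coercivity $(\cop v,v)\ge\tfrac12\cop\|v\|^2$, and applying $\partial_t^{-\alpha}$ so that the dissipation integral $\int_0^t(t-s)^{\alpha-1}\|\nabla e_1\|^2ds$ appears --- coincides with the paper's. The genuine gap is in your treatment of the nonlinearity. You apply \eqref{C_3m} to the \emph{whole} difference $\phi(u_h)-\phi(u)$, which produces the bound $CK^2\|e\|^2$ with the random weight $K:=1+\|u_h\|_{\dot H^1}^2+\|u\|_{\dot H^1}^2$ multiplying, in particular, $\|e_1\|^2$ --- exactly the quantity your Gr\"onwall step must control. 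This cannot be closed. Pathwise, your interpolation $\|e_1\|^2\le\varepsilon\|\nabla e_1\|^2+C_\varepsilon\|A_h^{-1/2}e_1\|^2$ leaves the term $\varepsilon K^2\|\nabla e_1\|^2$, which cannot be absorbed into the dissipation $\tfrac12\|\nabla e_1\|^2$ because $K$ is unbounded on $\Omega$; a fractional Gr\"onwall lemma with the random coefficient $K^2$ would then require Mittag--Leffler (exponential-type) moments of $\sup_t K$, which \eqref{u_hBound} does not supply (it gives only polynomial moments of order $16p$). If instead you take moments first, as you propose, H\"older forces a moment mismatch: $\|\,K^2\|e_1\|^2\,\|_{L^{p}(\Omega)}\le\|K\|^2_{L^{4p}(\Omega)}\,\|e_1\|^2_{L^{4p}(\Omega;H)}$, so the right-hand side lives at the $L^{4p}$ level while the retained energy and the dissipation on the left control only $L^{2p}$ moments; rerunning the argument at the $L^{4p}$ level pushes the right-hand side to $L^{8p}$, and the hierarchy never closes.

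The paper avoids precisely this by exploiting the intermediate solution $\tilde u_h$ \emph{inside} the nonlinear term: it splits $\phi(u_h)-\phi(u)=[\phi(u_h)-\phi(\tilde u_h)]+[\phi(\tilde u_h)-\phi(u)]$ and applies the one-sided (weak monotonicity) estimate \eqref{f-tr} to the first piece, giving $(\phi(\tilde u_h)-\phi(u_h),e_1)\le\|e_1\|^2$ with the deterministic constant $1$; this term is then handled by $\|e_1\|^2\le\|\nabla e_1\|\,\|e_1\|_{\dot H^{-1}}$ and Young, so the fractional Gr\"onwall inequality (Alikhanov) runs with purely deterministic coefficients. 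The Lipschitz estimate \eqref{C_3m}, with its random weight, is applied only to the second piece, which involves $e_2$ --- a quantity already estimated in Lemma \ref{thm:6.5} --- and the weight is separated from $\|e_2\|^2$ by H\"older in $\Omega$ only \emph{after} the Gr\"onwall step, where no absorption is needed. By estimating $e=e_1+e_2$ as a whole, your proposal discards the very structure for which $\tilde u_h$ was introduced. With the paper's splitting in place of your single application of \eqref{C_3m}, the remainder of your argument (moments, H\"older against \eqref{u_hBound}, and Lemma \ref{thm:6.5}) goes through essentially as you wrote it.
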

\begin{proof}
From \eqref{3.33} and \eqref{3.3}, $e_1$ satisfies
\begin{eqnarray}\label{errorTilde}
 \cop e_1(t)+A_h^2 e_1(t)+A_hP_h(\phi(u_h)-\phi(u))=0,\quad\ e_1(0)=0.
 \end{eqnarray}
Taking the inner product with $A^{-1}_h e_1(t)$ and using \eqref{f-tr}, we get
\begin{eqnarray*}
 (\cop A^{-1/2}_he_1(t),A^{-1/2}_he_1(t)) +\|\nabla e_1(t)\|&=&(\phi(u)-\phi(\tilde u_h),e_1(t))+(\phi(\tilde u_h)-\phi(u_h),e_1(t))\\
 & \leq &\frac{1}{2}\|\nabla e_1(t)\|^2+\frac{1}{2}\|A^{-1/2}( \phi(u)-\phi(\tilde u_h))\|^2+\|e_1(t)\|^2.
 \end{eqnarray*}
Using the fact 
$(\cop A^{-1/2}_he_1(t), A^{-1/2}_he_1(t))\geq \frac{1}{2}\cop\|A^{-1/2}_he_1(t)\|^2 $ and $\|e_1(t)\|^2\leq \|\nabla e_1(t)\| \|e_1(t)\|_{\dot H^{-1}}$, we obtain
$$  \frac{1}{2}\cop\|A^{-1/2}_he_1(t)\|^2 +\|\nabla e_1(t)\|
  \leq \frac{1}{2}\|\nabla e_1(t)\|^2+\frac{1}{2}\|A^{-1/2}( \phi(u)-\phi(\tilde u_h))\|^2+\frac{1}{3}\|\nabla e_1(t)\|^2+\frac{3}{4}\|e_1(t)\|_{\dot H^{-1}}^2,$$
 which can be written as 
 $$ \cop\|A^{-1/2}e_1(t)\|^2 +\|\nabla e_1(t)\|
  \leq c\|A^{-1/2}( \phi(u)-\phi(\tilde u_h))\|^2+c\|e_1(t)\|_{\dot H^{-1}}^2 .$$
 Applying a fractional Gronwall's inequality (see \cite[Lemma 2]{Alikhanov-2010}), we deduce that
 $$\|A^{-1/2}e_1(t)\|^2+\int_0^t(t-s)^{\alpha-1}\|\nabla e_1(s)\|^2\;ds\leq C\int_0^t (t-s)^{\alpha-1}\|A^{-1/2}(\phi(\tilde u_h(s))- \phi(u(s)))\|^2 ds.$$
By \eqref{C_3m}, we have
 $$\|A^{-1/2}e_1(t)\|^2+\int_0^t(t-s)^{\alpha-1}\|\nabla e_1(s)\|^2ds\leq C\int_0^t (t-s)^{\alpha-1}\|e_2(s)\|^2 (1+\|\tilde u_h(s)\|^4_{\dot H^1}+\|u(s)\|_{\dot H^1}^4) ds,$$
and  therefore, 
\begin{eqnarray*}
\|A^{-1/2}e_1(t)\|^2_{L^{2p}(\Omega, H)}&+&\int_0^t(t-s)^{\alpha-1}\|\nabla e_1(s)\|^2_{L^{2p}(\Omega, \dot H)}\;ds\\
&\leq& C\int_0^t (t-s)^{\alpha-1}\|e_2(s)\|^2_{L^{4p}(\Omega, \dot H)} \big(1+\|\tilde u_h(s)\|^4_{L^{8p}(\Omega,\dot H^1) }+\|u(s)\|_{L^{8p}(\Omega,\dot H^1 )}^4\big) ds,
\end{eqnarray*}
showing that
\begin{eqnarray*}
\small
\int_0^t(t-s)^{\alpha-1}\|\nabla e_1(s)\|^2_{L^{2p}(\Omega, H)}\;ds&\leq & C\int_0^t(t-s)^{\alpha-1} \|e_2(s)\|^2_{L^{4p}(\Omega, H)}ds\\
 & & +C\int_0^t(t-s)^{\alpha-1} \|e_2(s)\|^2_{L^{4p}(\Omega, H)} \|\tilde u_h(s)\|^4_{L^{8p}(\Omega,\dot H^1) }ds\\
 & &+ C\int_0^t(t-s)^{\alpha-1} \|e_2(s)\|^2_{L^{4p}(\Omega, H)}\|u(s)\|_{L^{8p}(\Omega,\dot H^1 )}^4 ds.
\end{eqnarray*}
Using the estimate in Lemma \ref{thm:6.5}, we obtain
\begin{eqnarray*} 
\int_0^t(t-s)^{\alpha-1}\|\nabla e_1(s)\|^2_{L^{2p}(\Omega, \dot H)}\;ds &\leq & C\int_0^t (t-s)^{\alpha-1}h^{\min\{2\beta,4\}-2r}ds\\
 &\leq & Ch^{\min\{2\beta,4\}-2r},
\end{eqnarray*}
which completes the proof.
\end{proof}
%
\begin{lemma}\label{error_tilde_u}
Let $u_h$ and $\tilde u_h$ be a solution of \eqref{3.3} and \eqref{3.33}, respectively, with $u_0\in L^{16p}(\Omega;\dot H^2)$. Then 
\begin{equation}\label{interm-err}
\|e_1(t)\|_{L^{2p}(\Omega, H)}\leq Ch^{\min\{2,\beta\}-r},   
\end{equation}
where $r$ is given by \eqref{rr} if  $\beta \in[1,2]$  and by \eqref{rr1} if  $\beta \in(2,3]$.
\end{lemma}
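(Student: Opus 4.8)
The plan is to \emph{avoid} the energy identity obtained by testing \eqref{errorTilde} with $e_1$ itself: because of the unbounded operator $A_h$ standing in front of the cubic term, that route would force the nonlinearity to be measured in $\dot H^1$, hence $e_2=\tilde u_h-u$ in $\dot H^2$, whereas Lemma \ref{thm:6.5} only provides a sharp bound for $e_2$ in the weaker $H$-norm, so a full power of $h$ would be lost. Instead I would start from the Duhamel representation of \eqref{errorTilde},
$$
e_1(t)=-\int_0^t E_{0,h}(t-s)A_hP_h\big(\phi(u_h(s))-\phi(u(s))\big)\,ds,
$$
and split the nonlinearity as $\phi(u_h)-\phi(u)=\big(\phi(u_h)-\phi(\tilde u_h)\big)+\big(\phi(\tilde u_h)-\phi(u)\big)$, writing correspondingly $\|e_1(t)\|\le J_1+J_2$. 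The two pieces are then treated with different distributions of the operator $A_h$ and different nonlinear estimates from Lemma \ref{lem:6.2m}.

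For the forcing part $J_2$ (which involves $e_2$) I would factor $A_h=A_h^{3/2}A_h^{-1/2}$ and use the smoothing bound $\|A_h^{3/2}E_{0,h}(\tau)v\|\le c\tau^{\alpha/4-1}\|v\|$ (cf. Lemma \ref{eE} and \eqref{res3}, valid for $A_h$ uniformly in $h$) together with \eqref{Ah} to pass to the negative norm, so that
$$
J_2\le c\int_0^t(t-s)^{\alpha/4-1}\big\|A^{-1/2}\big(\phi(\tilde u_h(s))-\phi(u(s))\big)\big\|\,ds.
$$
Estimate \eqref{C_3m} then bounds the integrand by $(1+\|\tilde u_h\|_{\dot H^1}^2+\|u\|_{\dot H^1}^2)\|e_2\|$; taking the $L^{2p}(\Omega)$-norm, applying Minkowski's integral inequality and H\"older in $\Omega$, the moment bounds \eqref{u_hBound} control the cubic prefactor and Lemma \ref{thm:6.5} supplies $\|e_2(s)\|\le c\,h^{\min\{2,\beta\}-r}$ with $r$ as in \eqref{rr}--\eqref{rr1}. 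Since the kernel $(t-s)^{\alpha/4-1}$ is integrable, this yields $J_2\le c\,h^{\min\{2,\beta\}-r}$.

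The self part $J_1$ is where Lemma \ref{error_integral} enters. Here I would factor $A_h=A_h^{1/2}A_h^{1/2}$ and use $\|A_h^{1/2}E_{0,h}(\tau)v\|\le c\tau^{3\alpha/4-1}\|v\|$ together with the $\dot H^1$-stability of $P_h$ and estimate \eqref{g-tr} (via $P_h=P_hP$), giving
$$
J_1\le c\int_0^t(t-s)^{3\alpha/4-1}\big(1+\|u_h\|_{\dot H^2}^2+\|\tilde u_h\|_{\dot H^2}^2\big)\|\nabla e_1(s)\|\,ds.
$$
After taking $L^{2p}(\Omega)$-norms and using \eqref{u_hBound} to absorb the prefactor, the crucial device is a Cauchy--Schwarz splitting in time that writes the kernel as $(t-s)^{\alpha/4-1/2}\cdot(t-s)^{(\alpha-1)/2}$; the square of the first factor has exponent $\alpha/2-1>-1$ and is thus integrable, while the second produces exactly the weight $(t-s)^{\alpha-1}$ appearing in Lemma \ref{error_integral}. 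Invoking that lemma bounds $\big(\int_0^t(t-s)^{\alpha-1}\|\nabla e_1(s)\|_{L^{2p}(\Omega,\dot H)}^2\,ds\big)^{1/2}$ by $c\,h^{\min\{2,\beta\}-r}$, so that $J_1\le c\,h^{\min\{2,\beta\}-r}$ as well, and combining the two estimates finishes the proof.

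I expect the main obstacle to be the interplay between the unbounded $A_h$ and the cubic nonlinearity: one must distribute the fractional powers of $A_h$ so that each part of the split is measured in a norm (the negative norm for $e_2$, the $\dot H^1$-norm for $e_1$) for which a sharp, $h$-lossless bound is available, and the Cauchy--Schwarz-in-time step tailored to the weight of Lemma \ref{error_integral} is essential to recover the full order $h^{\min\{2,\beta\}-r}$. A secondary bookkeeping point is that H\"older in $\Omega$ raises the stochastic integrability from $2p$ to $4p$; this is harmless because the convergence rates in Lemmas \ref{thm:6.5} and \ref{error_integral} are independent of $p$, and the assumption $u_0\in L^{16p}(\Omega;\dot H^2)$ leaves enough moment budget.
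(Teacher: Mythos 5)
Your proposal is correct and follows essentially the same route as the paper's own proof: the same Duhamel representation with the splitting $\phi(u_h)-\phi(u)=(\phi(u_h)-\phi(\tilde u_h))+(\phi(\tilde u_h)-\phi(u))$, the same operator factorizations ($A_h^{3/2}A_h^{-1/2}$ with \eqref{Ah} and \eqref{C_3m} for the $e_2$ part, $A_h^{1/2}A_h^{1/2}$ with the $\dot H^1$-stability of $P_h$ and \eqref{g-tr} for the $e_1$ part), and the same Cauchy--Schwarz splitting of the kernel $(t-s)^{3\alpha/4-1}$ to invoke Lemma \ref{error_integral}. The bookkeeping of stochastic integrability exponents you flag is handled identically (indeed slightly more explicitly) than in the paper.
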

\begin{proof}
In view of  \eqref{errorTilde}, 
$e_1(t)=\int_0^t { E}_{0,h}(t-s)A_hP_h(\phi(u)-\phi(u_h))\,ds,$ and so 
\begin{eqnarray*}
\|e_1(t)\|_{L^{2p}(\Omega, H)}& \leq & \int_0^t \|{ E}_{0,h}(t-s)A_hP_h(\phi(u)-\phi(\tilde u_h))\|_{L^{2p}(\Omega, \dot H)}\,ds\\
   & &+ \int_0^t \|{ E}_{0,h}(t-s)A_hP_h(\phi(\tilde u_h)-\phi(u_h))\|_{L^{2p}(\Omega, \dot H)}\,ds\\
   & =:& I_1+I_2 .
\end{eqnarray*}
The first term $I_1$ is bounded  as follow:
\begin{eqnarray*}
I_1 &\leq & \int_0^t (t-s)^{\alpha/4-1}\|A^{-1/2}(\phi(u)-\phi(\tilde u_h))\|_{L^{2p}(\Omega, \dot H)} \; ds\\
 &\leq &C \int_0^t (t-s)^{\alpha/4-1}\|e_2(s)\|_{L^{4p}(\Omega, \dot H)} (1+\|\tilde u_h(s)\|^2_{L^{8p}(\Omega,\dot H^1)}+\|u(s)\|^2_{L^{8p}(\Omega,\dot H^1 ) })\; ds \\
 &\leq &  
 C\int_0^t (t-s)^{\alpha/4-1}h^{\min\{2,\beta\}-r}ds \\
 &\leq & Ch^{\min\{2,\beta\}-r}. 
\end{eqnarray*}
For $I_2$, we use {\eqref{C_3}}  to get
\begin{eqnarray*}
I_2 & \leq & \int_0^t (t-s)^{3\alpha/4-1}\|\nabla P(\phi(\tilde u_h)-\phi(u_h))\|_{L^{2p}(\Omega, \dot H)}\,ds\\
& \leq & C\int_0^t (t-s)^{3\alpha/4-1}\|\nabla e_1(s)\|_{L^{4p} (\Omega, \dot H)}(1+{\|A_h \tilde u_h(s)\|^2_{L^{8p}(\Omega,\dot H)}+\| A_h u_h(s)\|^2_{L^{8p}(\Omega,\dot H )} })\; ds\\
& \leq & C\left(\int_0^t (t-s)^{\alpha-1}\|\nabla e_1(s)\|^2_{L^{4p} (\Omega, \dot H)}\; ds\right)^{1/2}\\
& &\left(\int_0^t (t-s)^{\alpha/2-1}(1+\|A_h\tilde u_h(s)\|^4_{L^{8p}(\Omega,\dot H)}+\|A_h u_h(s)\|^4_{L^{8p}(\Omega,\dot H ) })\; ds\right)^{1/2}\\
& \leq & C\left(\int_0^t (t-s)^{\alpha/2-1}\|\nabla e_1(s)\|^2_{L^{4p} (\Omega, \dot H)}\; ds\right)^{1/2}\\
& \leq &  Ch^{\min\{2,\beta\}-r}.
\end{eqnarray*}
The last inequality is obtained by using \eqref{interm-err-integ}.
\end{proof}
Based on the previous  lemmas and the triangle inequality, we establish our main theorem.
\begin{theorem}\label{thm:6.51}
Let $u_0\in L^{16p}(\Omega;\dot H^2)$ and $u_{0h}=P_hu_0$. Let 
 $u$ and $u_h$ be the solutions of \eqref{main} and \eqref{3.3} over $[0,T]$, respectively. Then,  if $\|A^{(\beta-2)/2}\|_{{\cal L}_2^0}<\infty$ and $\eta>0$, there holds  
\begin{equation*}\label{6.11-n}
\|u_h(t)-u(t)\|_{L^{2p}(\Omega,\dot H)}\leq c h^{\min\{2,\beta\}-r},\quad t\in (0,T],
\end{equation*}
where $c$ may depend on $T$, and $r$ is given by \eqref{rr} if  $\beta \in[1,2]$  and by \eqref{rr1} if  $\beta \in(2,3]$.
\end{theorem}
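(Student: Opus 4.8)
The plan is to exploit the additive splitting $e(t)=u_h(t)-u(t)=e_1(t)+e_2(t)$, with $e_1=u_h-\tilde u_h$ and $e_2=\tilde u_h-u$, that was introduced just before Lemma \ref{thm:6.5}, and then to bound each component separately. Since $e_1(t)\in\dot S_h\subset\dot H$ and $e_2(t)\in\dot H$, the $\dot H$-norm coincides with the ambient $H$-norm on both functions, so the component estimates already stated in the $L^{2p}(\Omega,H)$-norm transfer verbatim to the $L^{2p}(\Omega,\dot H)$-norm appearing in the statement. By the triangle inequality,
$$\|u_h(t)-u(t)\|_{L^{2p}(\Omega,\dot H)}\leq \|e_1(t)\|_{L^{2p}(\Omega,H)}+\|e_2(t)\|_{L^{2p}(\Omega,H)}.$$

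For the second term I would invoke Lemma \ref{thm:6.5} directly, which, under the hypotheses $\|A^{(\beta-2)/2}\|_{\mathcal{L}_2^0}<\infty$ and $\eta>0$, gives $\|e_2(t)\|_{L^{2p}(\Omega,H)}\leq ch^{\min\{2,\beta\}-r}$ with $r$ as in \eqref{rr} for $\beta\in[1,2]$ and \eqref{rr1} for $\beta\in(2,3]$. For the first term I would invoke Lemma \ref{error_tilde_u}, which yields $\|e_1(t)\|_{L^{2p}(\Omega,H)}\leq Ch^{\min\{2,\beta\}-r}$ with the very same exponent and the same definition of $r$. Adding the two bounds and absorbing the constants into a single $c=c(T)$ completes the argument; the matching powers of $h$ in the two lemmas are exactly what produces the clean final rate $h^{\min\{2,\beta\}-r}$, uniformly for $t\in(0,T]$.

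The point worth emphasizing is that the theorem itself is a formal corollary: all of the analytic difficulty resides in the supporting lemmas, and there is no further estimation to perform once they are in hand. The genuinely hard step is Lemma \ref{error_integral}, where the energy argument is carried out: testing the error equation \eqref{errorTilde} against $A_h^{-1}e_1$, using the one-sided bound \eqref{f-tr} together with the fractional-derivative inequality $(\cop A_h^{-1/2}e_1,A_h^{-1/2}e_1)\geq \tfrac{1}{2}\cop\|A_h^{-1/2}e_1\|^2$ and a fractional Gr\"onwall lemma, produces the weighted integral control of $\nabla e_1$. The obstacle flagged in the introduction — the discrete operator $(A_h)^2$ standing in front of the cubic nonlinearity — surfaces in the term $I_2$ of Lemma \ref{error_tilde_u}, where the factor $A_hP_h(\phi(\tilde u_h)-\phi(u_h))$ forces one to estimate $\nabla P(\phi(\tilde u_h)-\phi(u_h))$ through \eqref{C_3} and then close the bound by a Cauchy--Schwarz splitting that consumes precisely the integral estimate \eqref{interm-err-integ}, relying crucially on the a priori moment bounds \eqref{u_hBound}. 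With those lemmas established, the triangle inequality alone finishes the proof.
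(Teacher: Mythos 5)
Your proposal is correct and coincides with the paper's own argument: the paper proves Theorem \ref{thm:6.51} exactly by the splitting $u_h-u=e_1+e_2$, the bound for $e_2$ from Lemma \ref{thm:6.5}, the bound for $e_1$ from Lemma \ref{error_tilde_u}, and the triangle inequality, with all the analytic work residing in those supporting lemmas (notably Lemma \ref{error_integral}). Your additional remark that the $\dot H$- and $H$-norms agree on the mean-zero errors $e_1$ and $e_2$ is a correct justification of the norm bookkeeping that the paper leaves implicit.
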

\begin{remark}
For trace class noise ($\beta=2$), the convergence rate is $O(h^{4-\frac{4}{\alpha}(\frac{1}{2}-\gamma)-\epsilon})$ for $\gamma+\frac{\alpha}{2}<1/2$ and $O(h^2)$ for $\gamma+\frac{\alpha}{2}>1/2$. The former is due to the limited smoothing property of solution operator $E_\gamma(t)$.
In the limiting case $\alpha \to 1^-$ and $\gamma\to 0^+$, the convergence rate is $O(h^{2-\epsilon})$ which agrees with that for the classical stochastic Cahn--Hilliard equation \cite{10}.
For smoother noise ($\beta>2$), the convergence rate is $O(h^{2+\beta-\frac{4}{\alpha}(\frac{1}{2}-\gamma)-\epsilon})$ for $\gamma+\frac{\alpha\beta}{2}<1/2$ and $O(h^2)$ for $\gamma+\frac{\alpha\beta}{2}>1/2$. In the limiting case $\alpha \to 1^-$ and $\gamma\to 0^+$, we clearly have an $O(h^{2})$ convergence rate, which confirms the result obtained for the classical stochastic Cahn--Hilliard equation \cite{12}.

\end{remark}



\section{Fully Discrete Scheme} \label{sec:time}
For the time discretization of \eqref{3.3},   we propose a fully discrete scheme  using  a convolution quadrature (CQ) generated by the backward Euler (BE) method \cite{Lubich-2004}.  To describe  the  scheme, we divide the time interval $[0,T]$ into $N$ uniform partitions  with grid points $t_j=j\tau,\; j=0,1,\dots , N,$ and a time step size $\tau=T/N$.

Now, let $g^0=0$ and 
$ g^k=\tau^{-1}P_h\Delta W^k,\; \mbox{where }\Delta W^k=W(t_k)-W(t_{k-1}),\; k=1,\dots, N.$ 
 Also, set $\op:=\partial_t \partial_t^{\alpha-1}$ to be the Riemann--Liouville derivative in time of order $\alpha\in (0,1)$. 
 
%
%
Upon using the relation 
$\cop\varphi(t)=\op(\varphi(t)-\varphi(0))$, 
the fully discrete scheme is to find $U_h^n\in \dot S_h$, $n=1,2\cdots N$, such that
\begin{equation}\label{fully-1}
\dop (U_h^n -U_h^0) + A_h^2U_h^n = -A_hP_h\phi(U_h^{n}) +\partial_\tau^{-\gamma}g^n\quad \text{ with }\; U_h^0=P_hu_0.
\end{equation}
The Riemann--Liouville derivative/integral $ \partial_t^{\ell},\; \ell=\alpha,-\gamma$, are approximated by the following CQ   
\begin{eqnarray}\label{QC}
\partial_t^{\ell} \varphi(t_n)\approx\partial_\tau^{\ell} \varphi_n :=\tau^{-\ell}\sum_{j=0}^n a_{n-j}^{(\ell)}\varphi_j,\quad\mbox{where}\; \sum_{j=0}^\infty a_j^{(\ell)}\xi^j
=(\delta(\xi))^\ell,\quad a_j^{(\ell)}=(-1)^j\left(\begin{array}{c}\ell\\j\end{array}\right),
\end{eqnarray}
where  $\varphi_j= \varphi(t_j)$ and $\delta(\xi)=1-\xi$.
  Then, the numerical scheme \eqref{fully-1} can be  expanded as  
\begin{equation} \label{fully-2}
\tau^{-\alpha}\sum_{j=0}^n a_{n-j}^{(\alpha)}  (U^j_h -U^0_h )+A_h^2 u^n_h= -A_h\phi_h(U_h^{n})+ \tau^{\gamma}\sum_{j=0}^n a_{n-j}^{(-\gamma)} g^j,\quad n\geq 1,
\end{equation}
where $\phi_h=P_h\phi$. 
Following the analysis in \cite[Section 4]{BYZ-2019}, the discrete solution can be written as 
\begin{equation}\label{semi-1c}
U_h^n = \bar U_h^n - \tau \sum_{j=1}^n R_{n-(j-1)} A_h\phi_h(U_h^{j})+ \tau \sum_{j=1}^n Q_{n-(j-1)} g^j,\quad n\geq 1,
\end{equation}
where $\bar U_h^n$  is the discrete solution to the homogeneous problem in \eqref{fully-2} and the operators $R_j$ and $Q_j$ satisfy
\begin{equation} \label{R_j}
\sum_{j=0}^\infty R_j\zeta^j=\widetilde{R}(\zeta)\quad \mbox{ with } \quad \widetilde{R}(\zeta)=1+\zeta(\tau^{-\alpha}\delta(\zeta)^\alpha+A_h^2)^{-1}\tau^{-1},
\end{equation}
and 
\begin{equation} \label{R_j}
\sum_{j=0}^\infty Q_j\zeta^j=\widetilde{Q}(\zeta)\quad \mbox{ with } \quad \widetilde{Q}(\zeta)=1+\zeta(\tau^{-\alpha}\delta(\zeta)^\alpha+A_h^2)^{-1}\tau^{\gamma-1}\delta(\zeta)^{-\gamma}.
\end{equation} 
Equivalently, \eqref{semi-1c} can be expressed as 
\begin{equation}\label{fully-3}
U_h^n = \bar U_h^n -  \sum_{j=0}^{n-1}  R_{n-j} \int_{t_{j}}^{t_{j+1}}A_h\phi_h(U_h^{j+1})\;dt+  \sum_{j=0}^{n-1} Q_{n-j} \int_{t_{j}}^{t_{j+1}} \;P_hdW(t).
\end{equation}
Estimates involving of the operators $R_j$ and $Q_j$ are given in the following lemmas. 
The proofs follows the arguments presented in \cite[Lemma 4.6]{BYZ-2019}.

\begin{lemma} \label{PQ} The following estimates hold for $n=0,1,2,\ldots,$
\begin{eqnarray}
 \|A_h^{s/2} R_nP_h\| &\leq & c t_{n+1}^{\alpha (1- s/4)-1},\quad s\in [0,4]\label{R_n2}\\
  \|A_h^{s/2} ( E_{0,h}(t_n)-R_n)P_h\| & \leq & c t_{n+1}^{\alpha (1- s/4)-2}\tau, \quad s\in [0,1].
\end{eqnarray} 
\end{lemma}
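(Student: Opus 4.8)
The plan is to represent the convolution-quadrature weight $R_n$ as a contour integral and to reduce both bounds to the uniform-in-$h$ resolvent estimate \eqref{res3} applied to $A_h$. Starting from the generating function $\widetilde R(\zeta)=1+\zeta\tau^{-1}(\tau^{-\alpha}\delta(\zeta)^\alpha+A_h^2)^{-1}$, Cauchy's formula on a small circle gives, for $n\ge1$, $R_n=\frac{\tau^{-1}}{2\pi i}\oint \zeta^{-n}(\tau^{-\alpha}\delta(\zeta)^\alpha+A_h^2)^{-1}\,d\zeta$. Substituting $\zeta=e^{-z\tau}$ and deforming onto the truncated Hankel contour $\Gamma^\tau_{\theta,\delta}:=\Gamma_{\theta,\delta}\cap\{|\mathrm{Im}\,z|\le \pi/\tau\}$ turns this into $R_n=\frac{1}{2\pi i}\int_{\Gamma^\tau_{\theta,\delta}}e^{z(t_n-\tau)}(w^\alpha I+A_h^2)^{-1}\,dz$ with $w:=\delta(e^{-z\tau})/\tau$, which is the exact discrete analogue of $E_{0,h}(t_n)$ with $z^\alpha$ replaced by $w^\alpha$. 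I would then invoke the standard convolution-quadrature comparison estimates (Lubich): on $\Gamma^\tau_{\theta,\delta}$ one has $w\in\Sigma_{\theta'}$, $c_1|z|\le|w|\le c_2|z|$, and the consistency bound $|w^\alpha-z^\alpha|\le c\tau|z|^{\alpha+1}$.

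For \eqref{R_n2}, I would apply \eqref{res3} with $A\to A_h$ (valid uniformly in $h$ since $A_h$ is selfadjoint and positive definite) and $\mu=s/2\in[0,2]$ at the spectral parameter $w$, so that $\|A_h^{s/2}(w^\alpha I+A_h^2)^{-1}\|\le M|w|^{-\alpha(1-s/4)}\le c|z|^{-\alpha(1-s/4)}$. Inserting this into the representation gives $\|A_h^{s/2}R_nP_h\|\le c\int_{\Gamma^\tau_{\theta,\delta}}e^{\mathrm{Re}(z)(t_n-\tau)}|z|^{-\alpha(1-s/4)}\,|dz|$. Choosing $\delta\sim t_n^{-1}$ and rescaling $z\mapsto z/t_n$ along the rays and arc, the integral scales like $t_n^{\alpha(1-s/4)-1}$; since $\alpha(1-s/4)\in[0,1)$ for $s\in[0,4]$ the rescaled integral converges. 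The time shift $t_n-\tau=t_{n-1}$ is comparable to $t_{n+1}$ up to a bounded factor for $n\ge2$, while $n=0,1$ follow directly from the representation (on the truncated contour of length $\sim\tau^{-1}$), which yields \eqref{R_n2}.

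For the second estimate I would write $E_{0,h}(t_n)-R_n$ as a single integral over the common contour. Using the resolvent identity $(w^\alpha I+A_h^2)^{-1}-(z^\alpha I+A_h^2)^{-1}=(z^\alpha I+A_h^2)^{-1}(z^\alpha-w^\alpha)(w^\alpha I+A_h^2)^{-1}$, together with $\|A_h^{s/2}(z^\alpha I+A_h^2)^{-1}\|\le c|z|^{-\alpha(1-s/4)}$, $\|(w^\alpha I+A_h^2)^{-1}\|\le c|z|^{-\alpha}$ and the consistency bound, the integrand is controlled by $c\tau|z|^{\,1-\alpha(1-s/4)}e^{\mathrm{Re}(z)t}$. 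The same rescaling then produces $c\tau\,t_n^{\alpha(1-s/4)-2}$, the desired companion bound. The residual contributions — the factor $e^{-z\tau}-1$ from the time shift, of order $O(\tau|z|)$, and the error from truncating the contour at $|\mathrm{Im}\,z|=\pi/\tau$, which is of the same or higher order — are absorbed into this estimate. The restriction $s\in[0,1]$ keeps the exponent $1-\alpha(1-s/4)$ in the range where these consistency and truncation contributions remain of order $\tau$ with the stated power of $t_{n+1}$.

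The main obstacle is the consistency step: securing the sharp bound $|w^\alpha-z^\alpha|\le c\tau|z|^{\alpha+1}$ uniformly on the truncated contour, and checking that every resolvent estimate for $A_h$ is uniform in $h$, so that the difference produces exactly one extra factor $\tau|z|$ and nothing worse. Once the comparison lemma for $\delta(e^{-z\tau})/\tau$ is in hand, the remaining work is the routine contour rescaling already used to estimate $E_{0,h}$.
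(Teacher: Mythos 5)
Your proposal is correct and is essentially the paper's own route: the paper gives no self-contained argument but simply defers to \cite[Lemma 4.6]{BYZ-2019}, and the argument there is exactly what you describe --- Cauchy's formula for the CQ weights, the substitution $\zeta=e^{-z\tau}$ onto a truncated Hankel contour, Lubich's kernel comparison estimates for $w=\delta(e^{-z\tau})/\tau$ (sectoriality, $|w|\sim|z|$, $|w^\alpha-z^\alpha|\le c\tau|z|^{\alpha+1}$), the resolvent bound \eqref{res3} applied to $A_h$ uniformly in $h$, and contour rescaling. The only caveat, which is an artifact of the lemma's statement rather than of your argument, is the case $n=0$: there $R_0=I$, so the bound with $s>0$ cannot hold uniformly in $h$; but $R_0$ never enters the paper's error analysis, and your treatment of $n\ge 1$ is sound.
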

\begin{lemma} \label{Q_n} The following estimate holds for $s\in [0,1]$,
\begin{equation}
\quad \| A_h^{s/2} ( E_{\gamma,h}(t_n)-Q_n)P_h \|\leq c t_{n+1}^{\alpha (1-s/4)+\gamma-2}\tau,  
\quad n=0,1,2, \ldots.
\end{equation} 
\end{lemma}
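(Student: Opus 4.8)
The plan is to follow the convolution-quadrature error analysis of \cite[Lemma 4.6]{BYZ-2019} (see also \cite{Lubich-2004}), representing both $E_{\gamma,h}(t_n)$ and $Q_n$ as contour integrals and comparing their kernels. For $E_{\gamma,h}$ I would use the discrete version of \eqref{ops}, namely $E_{\gamma,h}(t_n)=\frac{1}{2\pi i}\int_{\Gamma_{\theta,\delta}}e^{zt_n}K_\gamma(z)\,dz$ with $K_\gamma(z):=z^{-\gamma}(z^\alpha I+A_h^2)^{-1}$. For $Q_n$, Cauchy's formula applied to the generating function $\sum_j Q_j\zeta^j=\widetilde Q(\zeta)$ gives $Q_n=\frac{1}{2\pi i}\oint_{|\zeta|=\varrho}\zeta^{-n-1}\widetilde Q(\zeta)\,d\zeta$; substituting $\zeta=e^{-z\tau}$ (so that $\zeta^{-n-1}\,d\zeta=-\tau e^{zt_n}\,dz$) and discarding the analytic constant term, which contributes $\oint\zeta^{-n-1}\,d\zeta=0$ for $n\ge1$, yields $Q_n=\frac{1}{2\pi i}\int_{\Gamma^\tau}e^{zt_n}K_\gamma^\tau(z)\,dz$, where $K_\gamma^\tau(z):=e^{-z\tau}\zeta_\tau(z)^{-\gamma}(\zeta_\tau(z)^\alpha I+A_h^2)^{-1}$, $\zeta_\tau(z):=(1-e^{-z\tau})/\tau$ is the backward Euler symbol, and $\Gamma^\tau$ is the truncation of $\Gamma_{\theta,\delta}$ to $|\mathrm{Im}\,z|\le\pi/\tau$ forced by the $\tau$-periodicity of $z\mapsto e^{-z\tau}$.

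With these representations I would split the error as a \emph{truncation} term $\frac{1}{2\pi i}\int_{\Gamma_{\theta,\delta}\setminus\Gamma^\tau}e^{zt_n}A_h^{s/2}K_\gamma(z)P_h\,dz$ plus a \emph{consistency} term $\frac{1}{2\pi i}\int_{\Gamma^\tau}e^{zt_n}A_h^{s/2}(K_\gamma(z)-K_\gamma^\tau(z))P_h\,dz$. The engine of the estimate is the discrete counterpart of the resolvent bound \eqref{res3}, which holds uniformly in $h$ since $A_h$ is selfadjoint and positive definite: taking $\mu=s/2$ in \eqref{res3} gives $\|A_h^{s/2}(w^\alpha I+A_h^2)^{-1}P_h\|\le M|w|^{-\alpha(1-s/4)}$ for $w\in\Sigma_\theta$. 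This is combined with the classical symbol estimates for backward Euler, valid once $|z|\tau$ is bounded (guaranteed on $\Gamma^\tau$): there exist $c_0,c_1,C>0$ with $c_0|z|\le|\zeta_\tau(z)|\le c_1|z|$, $\zeta_\tau(z)\in\Sigma_{\theta'}$ for some $\theta'\in(\pi/2,\pi)$, and $|\zeta_\tau(z)-z|\le C\tau|z|^2$, $|e^{-z\tau}-1|\le C\tau|z|$.

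For the consistency term I would write $K_\gamma-K_\gamma^\tau$ as a telescoping sum of three differences: in the scalar factor $z^{-\gamma}$ versus $\zeta_\tau(z)^{-\gamma}$; in the resolvent via the identity $(z^\alpha+A_h^2)^{-1}-(\zeta_\tau^\alpha+A_h^2)^{-1}=(z^\alpha+A_h^2)^{-1}(\zeta_\tau^\alpha-z^\alpha)(\zeta_\tau^\alpha+A_h^2)^{-1}$; and in the prefactor $1$ versus $e^{-z\tau}$. Each difference carries one extra power of $\tau|z|$ by the symbol estimates (using $|\zeta_\tau^\alpha-z^\alpha|\le C\tau|z|^{1+\alpha}$ and $|\zeta_\tau^{-\gamma}-z^{-\gamma}|\le C\tau|z|^{1-\gamma}$), so that after applying the resolvent bound to the surviving factors one gets $\|A_h^{s/2}(K_\gamma(z)-K_\gamma^\tau(z))P_h\|\le C\tau|z|^{\,1-\gamma-\alpha(1-s/4)}$. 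Inserting this into the consistency integral and choosing $\delta\sim t_{n+1}^{-1}$, the elementary contour estimate $\int_{\Gamma_{\theta,\delta}}|e^{zt_n}|\,|z|^{-\mu}\,|dz|\le C t_n^{\mu-1}$ with $\mu=\alpha(1-s/4)+\gamma-1$ produces the claimed $C\tau\,t_{n+1}^{\alpha(1-s/4)+\gamma-2}$; the truncation term is bounded by the same integral restricted to $|z|\gtrsim\tau^{-1}$ and yields at most the same order.

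The main obstacle will be the symbol analysis rather than the final contour bookkeeping: one must verify that $\zeta_\tau(z)$ stays within a sector $\Sigma_{\theta'}$ on which the resolvent estimate applies and obeys the two-sided comparison $|\zeta_\tau(z)|\sim|z|$ \emph{uniformly} in $\tau$, which is precisely what forces the truncation at $|\mathrm{Im}\,z|=\pi/\tau$ and what must be reconciled with the mild singularity of $z^{-\gamma}$ near the origin. Granting these symbol estimates, which are by now standard for the Lubich CQ and underlie Lemma \ref{PQ}, the remainder is the routine contour computation sketched above, parallel to the proof of Lemma \ref{eE}.
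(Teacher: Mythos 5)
Your proposal is correct and takes essentially the same route as the paper, which gives no self-contained proof but defers to the convolution-quadrature error analysis of \cite[Lemma 4.6]{BYZ-2019}: that argument is precisely the one you reconstruct, namely the contour representation of $Q_n$ via Cauchy's formula applied to $\widetilde Q(\zeta)$ with $\zeta=e^{-z\tau}$, the splitting into truncation and consistency terms, the backward-Euler symbol estimates $|\zeta_\tau(z)|\sim|z|$, $\zeta_\tau(z)\in\Sigma_{\theta'}$, $|\zeta_\tau(z)-z|\le C\tau|z|^2$ combined with the resolvent bound \eqref{res3} for $A_h$, and the final contour integration with $\delta\sim t_{n+1}^{-1}$. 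The only point worth noting is that your bound is naturally in terms of $t_n$ and hence covers $n\ge1$ (with $t_n^{e}\le 2^{|e|}t_{n+1}^{e}$ for the negative exponent $e=\alpha(1-s/4)+\gamma-2$), which suffices since the application in Appendix B only uses indices $n-j\ge1$.
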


To conduct our analysis, we shall employ the subsequent lemma to assess  the error.
\begin{lemma}\label{t_nANDa_n}
 The following inequality holds
$$\tau^\alpha a_{n}^{(-\alpha)}\geq \tau \dfrac{t_n^{\alpha-1}}{\Gamma(\alpha)}.$$
\end{lemma}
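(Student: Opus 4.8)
The plan is to prove the inequality
$$
\tau^\alpha a_n^{(-\alpha)} \ge \tau\,\frac{t_n^{\alpha-1}}{\Gamma(\alpha)}
$$
by working directly with the explicit form of the convolution quadrature weights. From the definition in \eqref{QC}, the coefficients $a_n^{(-\alpha)}$ are the Taylor coefficients of $(1-\xi)^{-\alpha}$, so the first step is to write them out as
$$
a_n^{(-\alpha)} = (-1)^n\binom{-\alpha}{n} = \frac{\alpha(\alpha+1)\cdots(\alpha+n-1)}{n!} = \frac{\Gamma(\alpha+n)}{\Gamma(\alpha)\,n!}.
$$
With $t_n = n\tau$, the claimed inequality is then equivalent to
$$
\frac{\Gamma(\alpha+n)}{\Gamma(\alpha)\,n!} \ge \frac{(n\tau)^{\alpha-1}}{\tau^{\alpha-1}\,\Gamma(\alpha)} = \frac{n^{\alpha-1}}{\Gamma(\alpha)},
$$
after cancelling $\tau^\alpha$ against $\tau\cdot\tau^{\alpha-1}$ and the common factor $1/\Gamma(\alpha)$. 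Thus everything reduces to the clean, $\tau$-free inequality $\Gamma(\alpha+n) \ge n^{\alpha-1}\,n! = n^{\alpha-1}\,\Gamma(n+1)$, or equivalently
$$
\frac{\Gamma(n+\alpha)}{\Gamma(n+1)} \ge n^{\alpha-1}.
$$

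The heart of the argument is therefore a lower bound on the ratio of Gamma functions $\Gamma(n+\alpha)/\Gamma(n+1)$ for $0<\alpha<1$. The cleanest route I would take uses the logarithmic convexity of $\Gamma$ (equivalently, the Gautschi-type inequality): since $\log\Gamma$ is convex on $(0,\infty)$, for $0<\alpha<1$ one has $x^{\alpha-1} \le \Gamma(x+\alpha)/\Gamma(x+1) \le (x+1)^{\alpha-1}$ for $x>0$. The left inequality, applied at $x=n$, is exactly what I need. Concretely, convexity of $\log\Gamma$ gives, for the interpolation $n+\alpha = (1-\alpha)(n) + \alpha(n+1)$,
$$
\log\Gamma(n+\alpha) \le (1-\alpha)\log\Gamma(n) + \alpha\log\Gamma(n+1),
$$
which after exponentiating and using $\Gamma(n+1)=n\,\Gamma(n)$ rearranges to the companion upper bound; to obtain the required \emph{lower} bound one instead compares $n+1 = \frac{1-\alpha}{1}(n+\alpha)+\cdots$ appropriately, or simply invokes the standard Wendel/Gautschi inequality $\Gamma(n+\alpha)\ge \Gamma(n+1)\,n^{\alpha-1}$ directly. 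I would cite Wendel's inequality as the one-line justification, or alternatively give the short induction in $n$ described below.

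If one prefers a self-contained elementary proof avoiding citations, I would argue by induction on $n$. The base case $n=1$ reads $\Gamma(1+\alpha)\ge 1$, which holds since $\Gamma$ attains its minimum on $(0,\infty)$ at a point near $1.46$ with value slightly below $1$, so a tiny bit of care is needed—better to start the induction at $n=1$ using $\Gamma(1+\alpha)=\alpha\Gamma(\alpha)\ge \alpha\cdot\frac{1}{\alpha}=1$ via $\Gamma(\alpha)\ge 1/\alpha$ (which follows from $\Gamma(\alpha)=\Gamma(1+\alpha)/\alpha$ and $\Gamma(1+\alpha)\le 1$ together with convexity, or directly from the integral representation). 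The inductive step multiplies both sides by $(n+\alpha)/(n+1)\ge$ the appropriate factor and uses the elementary inequality $(n+\alpha)/(n+1)\ge (n/(n+1))^{1-\alpha}$, equivalently $\bigl(1+\tfrac{\alpha-1}{n+1}\bigr)\ge\bigl(1-\tfrac{1}{n+1}\bigr)^{1-\alpha}$, which is a one-variable Bernoulli/convexity estimate.

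The main obstacle, such as it is, lies entirely in establishing the Gamma-ratio lower bound $\Gamma(n+\alpha)/\Gamma(n+1)\ge n^{\alpha-1}$ cleanly; the reduction from the stated $\tau$-dependent inequality to this $\tau$-free form is pure bookkeeping. I expect the sharpest presentation is to cite Wendel's inequality (or Gautschi's), reducing the whole lemma to two lines, and that is the route I would take in the paper, relegating the elementary induction to a remark only if a self-contained treatment is desired.
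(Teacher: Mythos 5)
Your reduction is correct: with $t_n=n\tau$ the lemma is equivalent to the $\tau$-free inequality $\Gamma(n+\alpha)/\Gamma(n+1)\ge n^{\alpha-1}$. The fatal gap is that this inequality is false, and the bound you invoke to prove it is Gautschi's inequality written backwards. Gautschi's inequality states $x^{1-s}<\Gamma(x+1)/\Gamma(x+s)<(x+1)^{1-s}$ for $x>0$ and $s\in(0,1)$; inverting,
\[
(x+1)^{\alpha-1}\;<\;\frac{\Gamma(x+\alpha)}{\Gamma(x+1)}\;<\;x^{\alpha-1},
\]
so at $x=n$ the ratio lies strictly \emph{below} $n^{\alpha-1}$ for every $n\ge 1$. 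The two-sided bound you wrote, $x^{\alpha-1}\le\Gamma(x+\alpha)/\Gamma(x+1)\le(x+1)^{\alpha-1}$, is internally inconsistent: since $\alpha-1<0$, its left member exceeds its right member. In fact your own log-convexity computation settles the matter against you: $\log\Gamma(n+\alpha)\le(1-\alpha)\log\Gamma(n)+\alpha\log\Gamma(n+1)$ exponentiates, via $\Gamma(n+1)=n\Gamma(n)$, to $\Gamma(n+\alpha)\le n^{\alpha-1}\Gamma(n+1)$ (strictly, by strict convexity), which is precisely the negation of your target. Wendel's inequality gives only $\Gamma(n+\alpha)\ge\Gamma(n+1)(n+\alpha)^{\alpha-1}$, not $\Gamma(n+1)\,n^{\alpha-1}$. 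And your fallback induction fails at its base case: for $n=1$ the claim reads $\Gamma(1+\alpha)\ge 1$, false for all $\alpha\in(0,1)$ (e.g.\ $\Gamma(3/2)=\sqrt{\pi}/2\approx 0.886$), while the proposed patch $\Gamma(\alpha)\ge 1/\alpha$ is the same false statement rewritten, being equivalent to $\alpha\Gamma(\alpha)=\Gamma(1+\alpha)\ge 1$; you even assert $\Gamma(1+\alpha)\le 1$ in the same sentence.

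What a correct argument must do---and what the paper's own proof effectively does---is concede a multiplicative constant. The paper applies the Laforgia--Natalini (Wendel-type) bound $\Gamma(n+\alpha)/\Gamma(n)\ge n^{\alpha}\bigl(n/(n+\alpha)\bigr)^{1-\alpha}$ together with $n+\alpha\le 2n$, which gives
\[
\tau^\alpha a_n^{(-\alpha)}\;\ge\;\frac{\tau^\alpha}{(n+\alpha)^{1-\alpha}\Gamma(\alpha)}\;\ge\;2^{\alpha-1}\,\frac{\tau\,t_n^{\alpha-1}}{\Gamma(\alpha)}.
\]
(The paper's final line silently drops the factor $2^{\alpha-1}\ge 1/2$, so strictly speaking the lemma as stated carries this constant as well; that is harmless, because the lemma is used only in the proof of Lemma \ref{sum_e_2}, where the factor is absorbed into the generic constant $C$.) The constant-free inequality your proposal is organized around is therefore not provable by any choice of Gamma-function inequality; the statement you should have reduced to is $\tau^\alpha a_n^{(-\alpha)}\ge c\,\tau\,t_n^{\alpha-1}/\Gamma(\alpha)$ for a fixed $c\in(0,1]$, which Wendel-type bounds do deliver with $c=2^{\alpha-1}$.
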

\begin{proof}
Starting from the definition of $a_{n}^{(-\alpha)}$, we have 
\begin{eqnarray*}
\tau^\alpha a_{n}^{(-\alpha)}&=&\tau^\alpha (-1)^{n}\left(\begin{array}{c}-\alpha\\j\end{array}\right)=\tau^\alpha \dfrac{\Gamma(n+\alpha)}{n\Gamma(n)\Gamma(\alpha)}.\\
\end{eqnarray*}
Then, a use of \cite[inequality (3.3)]{LN2013} yields 
\begin{eqnarray*}
\tau^\alpha a_{n}^{(-\alpha)}& \geq & \dfrac{ \tau^\alpha}{n\Gamma(\alpha)}\left(\dfrac{n}{n+\alpha}\right)^{1-\alpha}n^\alpha\\
   & = &\dfrac{ t_n^\alpha }{(n+\alpha)\Gamma(\alpha)}\left(\dfrac{n+\alpha}{n}\right)^{\alpha}\\
   &\geq & \dfrac{ t_n^\alpha }{2n\Gamma(\alpha)}\geq \dfrac{ t_n^{\alpha-1} \tau }{\Gamma(\alpha)},\\
\end{eqnarray*} 
which completes the proof.
\end{proof}

For the error analysis, we introduce the intermediate solution $\tilde u_h^n$ defined by 
\begin{equation}\label{DiscTldu}
\dop (\tilde u_h^n -\tilde u_h^0) + A_h^2\tilde u_h^n = -A_hP_h\phi(u_h(t_{n}) +\partial_\tau^{-\gamma}g^n\quad \text{ with }\; \tilde u_h^0=P_hu_0.
\end{equation} 
Then, in an expanded form, 
\begin{equation}\label{DiscTldu-3}
\tilde u_h^n = \bar U_h^n -  \sum_{j=0}^{n-1}  R_{n-j} \int_{t_{j}}^{t_{j+1}}A_h\phi_h( u_h(t_{j+1}))\;dt+  \sum_{j=0}^{n-1} Q_{n-j} \int_{t_{j}}^{t_{j+1}} \;P_hdW(t).
\end{equation}
Similar to \eqref{u_hBound}, we shall assume that 
\begin{equation}\label{u^nBound}
\sup_{0<j\leq N} \mathbb{E} \|A_h U_h^j\|_{\dot H}^{16p}<C \quad \mbox{and}\quad  \sup_{0<j\leq N} \mathbb{E} \|A_h \tilde u_h^j\|_{\dot H}^{16p}<C.
\end{equation} 

Next, we let $e^n_1=u_h(t_n)-\tilde u_h^n $ and $e^n_2=\tilde u_h^n- U_h^n$, and split the fully discrete error into three components: $u(t_n)-U_h^n=u(t_n)-u_h(t_n)+ e^n_1+ e^n_2$.  The focus is now on establishing an estimate for $e^n_1$. The subsequent lemmas are essential in achieving this goal. 
\begin{lemma}\label{lem:a3}
Let $\beta\in[1,3]$ and $u_0\in L^{16p}(\Omega;\dot H^2)$. If $\eta>0$,  then
\begin{equation}
\sum_{j=0}^{n-1}\int_{t_j}^{t_{j+1}}\left\|E_{0,h}(t_n-t)A_h\phi_h(u_h(t))-R_{n-j}A_h\phi_h(u_h(t_{j+1})))\right\|_{L^{2p}(\Omega; \dot H)}dt
\leq c \tau^{\min\{\alpha/2,\; \eta+\alpha/4,\; {\alpha+\gamma-1/2}\}}.
\end{equation}
\end{lemma}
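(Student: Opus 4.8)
The plan is to split the summand using the triangle inequality into two parts that isolate the two distinct sources of error: the discrepancy between the continuous operator $E_{0,h}$ and its CQ approximation $R_{n-j}$, and the discrepancy arising from the temporal non-smoothness of $\phi_h(u_h(t))$ versus its frozen value $\phi_h(u_h(t_{j+1}))$. Concretely, for each subinterval $[t_j,t_{j+1}]$ I would write
\begin{align*}
& E_{0,h}(t_n-t)A_h\phi_h(u_h(t))-R_{n-j}A_h\phi_h(u_h(t_{j+1}))\\
&\quad = \bigl(E_{0,h}(t_n-t)-R_{n-j}\bigr)A_h\phi_h(u_h(t))
 + R_{n-j}A_h\bigl(\phi_h(u_h(t))-\phi_h(u_h(t_{j+1}))\bigr),
\end{align*}
and estimate the resulting two sums, call them $S_1$ and $S_2$, separately in the $L^{2p}(\Omega;\dot H)$ norm.

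For $S_1$ I would insert $A_h=A_h^{1/2}A_h^{1/2}$ and pass the factor $A_h^{1/2}$ onto the operator, so that the remaining $A_h^{1/2}\phi_h(u_h(t))$ contributes $\|\nabla P\phi(u_h(t))\|$, which is controlled in $L^{2p}(\Omega;H)$ uniformly in $t$ by \eqref{C_3} together with the moment bound \eqref{u^nBound}. The operator factor $\|A_h^{1/2}(E_{0,h}(t_n-t)-R_{n-j})P_h\|$ is then bounded by combining Lemma \ref{PQ}: the second estimate there with $s=1$ gives a clean $\tau\, t_{n-j+1}^{\alpha(3/4)-2}$ decay on each whole subinterval, but near the diagonal $j=n-1$ one must instead use the integrable singularity $t_{n-j}^{\alpha/4-1}$ of $E_{0,h}$ itself (the first estimate of Lemma \ref{PQ}). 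Summing $\tau\cdot\tau\sum_{j}t_{n-j+1}^{3\alpha/4-2}$ away from the diagonal and handling the last interval directly should yield an $O(\tau^{\alpha/2})$ contribution after balancing the boundary term, matching the first entry in the claimed minimum.

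For $S_2$ the key input is the temporal H\"older continuity of the discrete solution $u_h$; here I would invoke the analogue of Theorem \ref{local}/\eqref{lip} for $u_h$ (valid uniformly in $h$ since the bounds \eqref{dE} hold for $A_h$ and the moment assumptions \eqref{u^nBound} are in force), giving $\|\phi_h(u_h(t))-\phi_h(u_h(t_{j+1}))\|_{L^{2p}(\Omega;\dot H^{-1})}\leq c\,(t_{j+1}-t)^{\min\{\alpha(\nu-1)/4,\,\theta-1/2\}}$ with $\nu=2$, i.e.\ exponent $\min\{\alpha/4,\eta\}$ on the relevant range. Moving one power $A_h^{1/2}$ off $A_h$ onto $R_{n-j}$ and bounding $\|A_h^{1/2}R_{n-j}P_h\|\leq c\,t_{n-j+1}^{3\alpha/4-1}$ by \eqref{R_n2}, the summation $\sum_j \int_{t_j}^{t_{j+1}} t_{n-j+1}^{3\alpha/4-1}(t_{j+1}-t)^{\min\{\alpha/4,\eta\}}dt$ contributes a factor $\tau^{1+\min\{\alpha/4,\eta\}}$ per interval against a convergent Riemann sum, producing the exponents $\eta+\alpha/4$ and $\alpha+\gamma-1/2$ (the latter from the $\min$ closing at $\theta-1/2=\eta$ combined with the $\alpha/4$ smoothing) that appear in the stated minimum.

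The main obstacle I anticipate is the $S_1$ estimate near the diagonal: the two bounds in Lemma \ref{PQ} have incompatible regimes (the $O(\tau)$ consistency bound is only valid for $s\in[0,1]$ and degrades as $t_{n-j}\to0$, while the raw bound on $E_{0,h}$ has a non-integrable-looking $t^{\alpha/4-1}$ factor that becomes borderline after multiplying by $A_h^{1/2}$), so the splitting of the sum into a diagonal block and a bulk block must be done carefully to extract exactly $\tau^{\alpha/2}$ without losing a logarithmic factor. Verifying that the three exponents combine into the single minimum $\min\{\alpha/2,\ \eta+\alpha/4,\ \alpha+\gamma-1/2\}$, rather than a larger-looking expression, will require tracking the precise powers of $\tau$ and $t_n$ through each geometric-type sum and using $\theta-1/2=\eta$ from the remark following Lemma \ref{LL}.
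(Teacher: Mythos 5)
Your two-term decomposition is sound in spirit---it simply regroups the paper's three-term splitting into two---but two steps fail as written, and one of them is a genuine quantitative gap. The smaller issue concerns $S_1$: the consistency estimate in Lemma \ref{PQ}, $\|A_h^{s/2}(E_{0,h}(t_n)-R_n)P_h\|\le c\,\tau\, t_{n+1}^{\alpha(1-s/4)-2}$, compares the two operators \emph{at grid points only}; it says nothing about $E_{0,h}(t_n-t)-R_{n-j}$ for $t\in(t_j,t_{j+1})$, which is what $S_1$ contains. To use it you must insert $E_{0,h}(t_n-t_j)$ and control the continuous difference $E_{0,h}(t_n-t)-E_{0,h}(t_n-t_j)$ separately via the derivative bound $\|A_hE_{0,h}'(t)P_h\|\le c\,t^{\alpha/2-2}$; at that point you have reproduced exactly the paper's three-term splitting (operator time-continuity, solution H\"older continuity, CQ consistency at grid points), and each of those two pieces does give $O(\tau^{\alpha/2})$. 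So this part is repairable, but your "diagonal vs.\ bulk" discussion does not identify the actual structural obstruction.

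The serious gap is in $S_2$, where you pair the wrong norms. You put $A_h^{1/2}$ on the operator and $A_h^{1/2}$ on the nonlinear increment, which forces you to measure $\phi_h(u_h(t))-\phi_h(u_h(t_{j+1}))$ through the $\dot H^1$-Lipschitz bound \eqref{g-tr} and the $q=1$ H\"older continuity, yielding the exponent $\min\{\alpha/4,\eta\}$ (your proposal even states this exponent for the $\dot H^{-1}$ norm, which is internally inconsistent). This exponent is strictly smaller than the claimed $\min\{\alpha/2,\,\eta+\alpha/4,\,\alpha+\gamma-1/2\}$ in general: for $\beta=2$, $\gamma=1/2$, $\alpha=0.8$ one has $\eta=0.6$, so the lemma asserts rate $\tau^{0.4}$ while your $S_2$ bound only gives $\tau^{\min\{0.2,0.6\}}=\tau^{0.2}$. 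No care in the summation can recover this deficit, since the smoothing factor $t_{n-j+1}^{3\alpha/4-1}$ only affects summability, not powers of $\tau$. The correct pairing (the paper's) is the opposite one: move $A_h^{3/2}$ onto the evolution operator, whose singularity $t_{n-j+1}^{\alpha/4-1}$ is still summable (e.g.\ \eqref{R_n2} with $s=3$), and use \eqref{Ah} together with the $\dot H^{-1}$-Lipschitz bound \eqref{C_3m} to reduce the increment to the $L^2$-H\"older continuity of Theorem \ref{local} with $q=0$, whose exponent is $\min\{\alpha/2,\theta-1/2\}$ with $\theta$ evaluated at $q=0$, i.e.\ $\theta-1/2=\eta+\alpha/4$ for $\beta\in[1,2]$ and $\theta-1/2=\alpha+\gamma-1/2$ for $\beta\in(2,3]$. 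That is precisely where the three exponents in the statement come from; your heuristic attribution of $\eta+\alpha/4$ to "the $\min$ closing at $\eta$ combined with the $\alpha/4$ smoothing" is not how the bound is obtained.
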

\begin{proof}
See Appendix A.
\end{proof}
%
\begin{lemma}\label{lem:a2}
For {$\beta\in [1,2]$}, there holds
\begin{equation}\label{mis}
\left\|\sum_{j=0}^{n-1}\int_{t_j}^{t_{j+1}}(E_{\gamma,h}(t_n-t)-Q_{n-j})P_hdW(t)\right\|_{L^{2p}(\Omega;\dot H)}
\leq c\tau^\mu t_n^{\max\{\sigma-1,0\}}\|A^{(\beta-2)/2}||_{\mathcal{L}_0^2},
\end{equation}
where $\sigma=\eta+\alpha/4$ and
\begin{equation}\label{mu}
\mu=\left\{\begin{array}{ll} \sigma, & \sigma <1,\\ 1-\epsilon, &\sigma=1,\\1,&\sigma>1.\end{array}\right.
\end{equation}
\end{lemma}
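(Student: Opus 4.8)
The plan is to estimate the stochastic error term by splitting the sum into contributions from the final interval and from the earlier intervals, exploiting the operator bound in Lemma~\ref{Q_n} together with Burkholder's inequality~\eqref{burk}. Writing the difference $E_{\gamma,h}(t_n-t)-Q_{n-j}$ as a stochastic integrand acting through $A^{(\beta-2)/2}$, the idea is to factor each summand as
\[
\bigl(E_{\gamma,h}(t_n-t)-Q_{n-j}\bigr)P_h \;=\; A_h^{(2-\beta)/2}\bigl(E_{\gamma,h}(t_n-t)-Q_{n-j}\bigr)P_h\, A^{(\beta-2)/2},
\]
so that the Hilbert--Schmidt norm $\|A^{(\beta-2)/2}\|_{\mathcal{L}_2^0}$ can be pulled out and the remaining operator norm is controlled by Lemma~\ref{Q_n} with $s=2-\beta\in[0,1]$. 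First I would apply Burkholder's inequality to convert the $L^{2p}(\Omega;\dot H)$-norm of the stochastic sum into the $L^2(0,t_n;\mathcal{L}_2^0)$-type bound
\[
\left\|\sum_{j=0}^{n-1}\int_{t_j}^{t_{j+1}}\!\!\bigl(E_{\gamma,h}(t_n-t)-Q_{n-j}\bigr)P_h\,dW(t)\right\|_{L^{2p}(\Omega;\dot H)}^2
\le c\sum_{j=0}^{n-1}\int_{t_j}^{t_{j+1}}\!\!\bigl\|\bigl(E_{\gamma,h}(t_n-t)-Q_{n-j}\bigr)P_h\bigr\|_{\mathcal{L}_2^0}^2\,dt.
\]

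The next step is to split the sum at the last index. For the interior indices $j\le n-2$ the operator estimate from Lemma~\ref{Q_n} (with $s=2-\beta$) gives a factor $t_{n-j}^{\,2(\sigma-1)}\tau^2$, where $\sigma=\eta+\alpha/4$ matches the exponent $\alpha(1-s/4)+\gamma-1$ after substituting $s=2-\beta$ and using $\eta=\alpha(1+\beta)/4+\gamma-1/2$. Summing these contributions reduces to controlling $\tau^2\sum_{k\ge 2}t_k^{2(\sigma-1)}$, a Riemann-sum comparison with $\int_0^{t_n} s^{2(\sigma-1)}\,ds$ whose behaviour depends on whether $2(\sigma-1)>-1$, i.e. $\sigma>1/2$; the condition $\eta>0$ guarantees $\sigma>1/2$ and this is precisely what distinguishes the three regimes for $\mu$ in~\eqref{mu} and produces the factor $t_n^{\max\{\sigma-1,0\}}$. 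The final interval $j=n-1$ cannot use the difference bound directly, since the singularity of $E_{\gamma,h}$ near $t=t_n$ is not integrable against the full $\tau$-weight; there I would bound $E_{\gamma,h}$ and $Q_n$ separately using Lemma~\ref{eE} (valid for $A_h$) and Lemma~\ref{PQ}/\ref{Q_n}, and integrate the resulting $(t_n-t)^{2\sigma-2}$ singularity over the last subinterval, which yields a contribution of order $\tau^{2\sigma}$, consistent with the claimed rate.

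The main obstacle I anticipate is the borderline case $\sigma=1$ and the careful bookkeeping of the temporal singularity at $t=t_n$. When $\sigma=1$ the discrete sum $\tau^2\sum_k t_k^{0}$ behaves like $\tau^2\cdot n = \tau\, t_n$, producing a logarithmic-type or $\epsilon$-loss that forces the $\tau^{1-\epsilon}$ rate rather than $\tau^1$; tracking this loss cleanly while keeping the $t_n^{\max\{\sigma-1,0\}}$ dependence correct requires splitting the Riemann sum into the near-diagonal block (indices $j$ close to $n$, handled by direct integration of the singularity) and the far block (handled by the difference estimate), and verifying that the two blocks combine to exactly the exponent $\mu$ in~\eqref{mu}. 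A secondary technical point is ensuring the restriction $\beta\in[1,2]$ is genuinely needed: it guarantees $s=2-\beta\in[0,1]$, which is the admissible range in Lemma~\ref{Q_n}, so the factorization through $A^{(\beta-2)/2}$ stays within the validity of the operator estimates, and one must check this range constraint is respected at every use.
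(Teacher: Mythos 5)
Your overall skeleton (Burkholder's inequality, pulling out $\|A^{(\beta-2)/2}\|_{\mathcal{L}_2^0}$, splitting the sum, treating the last interval separately) matches the paper's opening moves, but the proposal has a genuine gap at its core. On the interior intervals $j\le n-2$ you control the integrand $E_{\gamma,h}(t_n-t)-Q_{n-j}$ by Lemma~\ref{Q_n} alone. That lemma, however, only bounds the quadrature error \emph{at grid points}, i.e.\ $E_{\gamma,h}(t_n-t_j)-Q_{n-j}$; for $t\in(t_j,t_{j+1}]$ the integrand also contains the within-interval variation $E_{\gamma,h}(t_n-t)-E_{\gamma,h}(t_n-t_j)$, which is not covered by Lemma~\ref{Q_n} and is not negligible --- in fact it is the dominant contribution that the paper spends the larger part of Appendix~B on (the terms $I_{1,1}$ and $I_{1,2}$ there). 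That piece has to be written as $\int_{t_j}^{t}E_{\gamma,h}'(t_n-r)\,dr$ and estimated via H\"older's inequality together with the smoothing bound $\|A_h^{(2-\beta)/2}E_{\gamma,h}'(t)\|\le ct^{\alpha(2+\beta)/4+\gamma-2}$, with the last interval handled by bounding the two operators separately (as you do). Without this continuity term your estimate of the interior block is unjustified.

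There is also an exponent error that breaks the regime structure of the lemma. With $s=2-\beta$, Lemma~\ref{Q_n} gives the exponent $\alpha(2+\beta)/4+\gamma-2=\sigma-\tfrac32$, not $\sigma-1$; squaring and integrating over an interval of length $\tau$ leads to $\tau^{3}\sum_{k}t_k^{2\sigma-3}$, and the trichotomy occurs at $2\sigma-3=-1$, i.e.\ exactly at $\sigma=1$, which is what produces the three cases of $\mu$ in \eqref{mu} and the factor $t_n^{\max\{\sigma-1,0\}}$. Your claimed factor $t_{n-j}^{2(\sigma-1)}\tau^{2}$ and threshold $\sigma>1/2$ would not reproduce the statement; moreover the assertion that $\eta>0$ guarantees $\sigma>1/2$ is false ($\eta>0$ only gives $\sigma>\alpha/4$), and a threshold that is automatically satisfied could not distinguish any regimes. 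A further, more minor, point: the displayed ``factorization'' through $A^{(\beta-2)/2}$ is not an identity, since powers of $A_h$ and $A$ do not cancel each other; the paper justifies replacing $A$ by $A_h$ in the Hilbert--Schmidt norms by interpolating between $\beta=1$ and $\beta=2$, a step your argument would also need.
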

\begin{proof}
See Appendix B.
\end{proof}

\begin{remark}\label{B23}
For the case where {$\beta\in (2,3]$},  we can argue as in the proof of the previous lemma to derive  the following result: 
$$
\left\|\sum_{j=0}^{n-1}\int_{t_j}^{t_{j+1}}(E_{\gamma,h}(t_n-t)-Q_{n-j})P_hdW(t)\right\|_{L^{2p}(\Omega;\dot H)}
\leq c\tau^{\zeta}t_n^{\max\{\xi-1,0\}}\|A^{(\beta-2)/2}||_{\mathcal{L}_0^2},
$$
where $\xi:=\alpha+\gamma-1/2$ and 
\begin{equation}\label{zeta}
\zeta=\left\{\begin{array}{ll} \xi, & \xi <1,\\ 1-\epsilon, &\xi=1,\\1,&\xi>1.\end{array}\right.
\end{equation}
\end{remark}
Now we are ready to prove an  estimate for $e^n_1$ in $L^{2p}(\Omega;\dot H)$-norm.
\begin{lemma}\label{e_1}
Let $u_h(t_n)$ and $\tilde u_h^n$ be the solutions of \eqref{form-1d} and \eqref{DiscTldu-3}, respectively, with 
$u_0\in L^{16p}(\Omega;\dot H^2)$. 
Assume $\|A^{(\beta-2)/2}\|_{{\cal L}_0^2}<\infty$ for some $\beta\in [1,3]$ and $\eta>0$. Then
\begin{equation}\label{fully-1-e1}
\|e^n_1\|_{L^{2p}(\Omega; \dot H)}\leq c(h^2+\tau^{\min\{\alpha/2,\;\mu,\;\zeta\}}),
\end{equation}
where $\mu$ and $\zeta$ are given by \eqref{mu} and \eqref{zeta}, respectively.
\end{lemma}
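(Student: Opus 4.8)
The plan is to represent $e_1^n = u_h(t_n) - \tilde u_h^n$ through the mild-solution formula \eqref{form-1d} evaluated at $t=t_n$ and the discrete formula \eqref{DiscTldu-3}, exploiting that both use the \emph{same} semidiscrete nonlinear data $\phi_h(u_h)$ and the same noise increments. Writing the continuous convolutions in \eqref{form-1d} as sums over the subintervals $[t_j,t_{j+1}]$, the error splits cleanly as
\begin{align*}
e_1^n &= \big(E_{1-\alpha,h}(t_n)P_h u_0 - \bar U_h^n\big) \\
&\quad - \sum_{j=0}^{n-1}\int_{t_j}^{t_{j+1}}\big(E_{0,h}(t_n-t)A_h\phi_h(u_h(t)) - R_{n-j}A_h\phi_h(u_h(t_{j+1}))\big)\,dt \\
&\quad + \sum_{j=0}^{n-1}\int_{t_j}^{t_{j+1}}\big(E_{\gamma,h}(t_n-t)-Q_{n-j}\big)P_h\,dW(t),
\end{align*}
which I label the homogeneous, deterministic, and stochastic contributions, respectively.

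First I would bound the deterministic (nonlinear) contribution: its $L^{2p}(\Omega;\dot H)$-norm is exactly the quantity estimated in Lemma \ref{lem:a3}, giving the rate $\tau^{\min\{\alpha/2,\,\eta+\alpha/4,\,\alpha+\gamma-1/2\}}=\tau^{\min\{\alpha/2,\sigma,\xi\}}$ with $\sigma=\eta+\alpha/4$ and $\xi=\alpha+\gamma-1/2$; the required moment bounds on $\|A_h u_h\|$ are supplied by assumption \eqref{u^nBound}. For the stochastic contribution I would invoke Lemma \ref{lem:a2} when $\beta\in[1,2]$ and Remark \ref{B23} when $\beta\in(2,3]$, which yield $\tau^\mu t_n^{\max\{\sigma-1,0\}}$ and $\tau^\zeta t_n^{\max\{\xi-1,0\}}$ respectively, with $\mu,\zeta$ as in \eqref{mu} and \eqref{zeta}. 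Since $t_n\le T$ the growth factors are bounded by a constant, and since $\mu\le\sigma$ and $\zeta\le\xi$, the stochastic part dominates the deterministic one and supplies the rate $\tau^{\min\{\mu,\zeta\}}$.

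For the homogeneous contribution, $E_{1-\alpha,h}(t_n)P_h u_0-\bar U_h^n$ is a purely temporal convolution-quadrature error inside $\dot S_h$; using the first-order accuracy of the backward-Euler CQ together with $u_0\in L^{16p}(\Omega;\dot H^2)$ and the smoothing of $E_{1-\alpha,h}$, I expect a bound of order $\tau$, which is dominated by $\tau^{\min\{\alpha/2,\mu,\zeta\}}$ since this exponent is strictly below $1$. Collecting the three bounds via the triangle inequality then yields \eqref{fully-1-e1}; the $h^2$ term is retained so that the estimate combines uniformly with the spatial rate of Theorem \ref{thm:6.51} in the final fully discrete error.

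The genuinely delicate analysis has been quarantined into the cited auxiliary results: Lemma \ref{lem:a3} handles the interplay between the singular kernel of $E_{0,h}$ and the quadrature operator $R_{n-j}$ acting on the only H\"older-continuous integrand $\phi_h(u_h)$, while Lemma \ref{lem:a2} and Remark \ref{B23} control the stochastic convolution under the limited regularity of the fractionally integrated noise through a careful per-subinterval splitting. Consequently the main obstacle in the present proof is organizational rather than analytic: I must ensure the decomposition above is \emph{exact}, so that no spurious initial-data or projection terms survive (the shared initialization $u_h(0)=\tilde u_h^0=P_hu_0$ is essential here), and then verify that the temporal exponents $\sigma$ and $\xi$ collapse to the stated $\min\{\alpha/2,\mu,\zeta\}$ once the $t_n$-growth factors are absorbed into the constant.
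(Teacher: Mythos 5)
Your decomposition is exactly the paper's: the same three-way split into the homogeneous CQ error, the deterministic nonlinear term handled by Lemma \ref{lem:a3}, and the stochastic term handled by Lemma \ref{lem:a2} (with Remark \ref{B23} for $\beta\in(2,3]$), assembled by the triangle inequality. The one place where your reasoning is off is the homogeneous contribution: with $u_0$ only in $L^{16p}(\Omega;\dot H^2)$ (not in the domain of $A^2$), the backward-Euler CQ does \emph{not} give a uniform $O(\tau)$ bound for $E_{1-\alpha,h}(t_n)P_hu_0-\bar U_h^n$; the correct nonsmooth-data estimate, which the paper imports as \eqref{estimate-0a} from \cite[Lemma 5.4]{MK-2021}, is $c\left(h^2+\tau\, t_n^{\alpha/2-1}\right)\|u_0\|_{L^{2p}(\Omega;\dot H^2)}$, carrying a singular factor that is only absorbed via $\tau\, t_n^{\alpha/2-1}\leq c_T\,\tau^{\alpha/2}$. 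This also explains where the $h^2$ in \eqref{fully-1-e1} actually originates — it is produced by this homogeneous term (the cited estimate compares against the continuous propagator $E_{1-\alpha}(t_n)u_0$), not merely ``retained'' for bookkeeping with Theorem \ref{thm:6.51}. Since $\tau^{\alpha/2}$ already appears in the minimum $\min\{\alpha/2,\mu,\zeta\}$, your final conclusion is unaffected, but the intermediate claim of first-order accuracy should be replaced by the nonsmooth-data CQ estimate.
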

\begin{proof}  
 Noting the representation of the semidiscrete solution 
\begin{equation*}\label{semi-1d}
u_h(t_n) = E_{1-\alpha,h}(t_n) u_h^0 -  \sum_{j=1}^n  \int_{t_{j-1}}^{t_j} E_{0,h}(t_n-t)A_h\phi_h(u_h(t))\;dt+  \sum_{j=1}^n  \int_{t_{j-1}}^{t_j}E_{\gamma,h}(t_n-t) \;P_hdW(t),
\end{equation*}
we have, by the triangle inequality,
\begin{eqnarray*}
\|e^n_1\|_{L^{2p}(\Omega;H)}&\leq &\|E_{1-\alpha,h}(t_n)u_h^0-\bar U_h^n\|_{L^{2p}(\Omega;H)}\\
& & +\left\|\sum_{j=0}^{n-1}\int_{t_j}^{t_{j+1}}(E_{0,h}(t_n-t)A_h\phi_h(u_h(t))-R_{n-j}A_h\phi_h(u_h(t_{j+1})))\;dt\right\|_{L^{2p}(\Omega;\dot H)}\\
& &+\left\|\sum_{j=0}^{n-1}\int_{t_j}^{t_{j+1}}(E_{\gamma,h}(t_n-t)-Q_{n-j})P_hdW(t)\right\|_{L^{2p}(\Omega;\dot H)}.
\end{eqnarray*}
Then, using Lemmas \ref{lem:a3} and  \ref{lem:a2}, and the following error estimate  (see \cite[Lemma 5.4]{MK-2021})
\begin{equation} \label{estimate-0a}
\|E_{1-\alpha}(t_n)u_0-\bar U_h^n\|\leq c(h^2+\tau t_n^{\alpha/2-1})\|u_0\|_{L^{2p}(\Omega;\dot H^2)},
\end{equation} 
we deduce that 
\begin{eqnarray*}
\|e^n_1\|_{L^{2p}(\Omega;\dot H)}&\leq &c(h^2+\tau^{\min\{\alpha/2,\;\mu,\;\zeta\}}+\tau t_n^{\alpha/2-1} ).
\end{eqnarray*}
Given that  $\tau t_n^{\alpha/2-1}\leq c_T\tau^{\alpha/2}$, the proof is now complete.
\end{proof}
The next lemma will be used to derive an  estimate for $e^n_2$ in $L^{2p}(\Omega, \dot H)$-norm.
\begin{lemma}\label{sum_e_2}
Let $\tilde u_h^n$ and $U_h^n$ be the solutions of \eqref{DiscTldu} and \eqref{fully-1}, respectively, with $u_0\in L^{16p}(\Omega;\dot H^2)$. 
Assume $\|A^{(\beta-2)/2}\|_{{\cal L}_0^2}<\infty$ for some $\beta\in [1,3]$ and $\eta>0$. Then
\begin{eqnarray}\label{intErrorTilde}
\small
\tau\sum_{j=1}^n t_{n-j}^{\alpha-1}\|\nabla e^j_2\|^2_{L^{2p}(\Omega, \dot H)}&\leq &C h^2+C\tau^{\min\{\alpha/2,\;\mu,\; \zeta\}},   
\end{eqnarray}
where $\mu$ and $\zeta$ are given by \eqref{mu} and \eqref{zeta}, respectively.
\end{lemma}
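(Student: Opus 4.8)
The plan is to reproduce the energy argument of Lemma \ref{error_integral} at the fully discrete level, substituting the convolution quadrature for the continuous fractional calculus and using Lemma \ref{t_nANDa_n} as the bridge between the CQ weights and the kernel $t^{\alpha-1}$. Subtracting \eqref{fully-1} from \eqref{DiscTldu} and recalling that $\tilde u_h^0=U_h^0=P_hu_0$, the error $e_2^n$ satisfies
\begin{equation*}
\dop e_2^n+A_h^2e_2^n+A_hP_h\big(\phi(u_h(t_n))-\phi(U_h^n)\big)=0,\qquad e_2^0=0 .
\end{equation*}
First I would take the inner product with $A_h^{-1}e_2^n$. Since $A_h$ is self-adjoint, $(A_h^2e_2^n,A_h^{-1}e_2^n)=\|\nabla e_2^n\|^2$ and the nonlinear term collapses to $(\phi(u_h(t_n))-\phi(U_h^n),e_2^n)$. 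Splitting $\phi(u_h(t_n))-\phi(U_h^n)=[\phi(u_h(t_n))-\phi(\tilde u_h^n)]+[\phi(\tilde u_h^n)-\phi(U_h^n)]$, applying the monotonicity bound \eqref{f-tr} to the second bracket (note $e_2^n=\tilde u_h^n-U_h^n$), and using the backward-Euler discrete positivity property $(\dop A_h^{-1/2}e_2^n,A_h^{-1/2}e_2^n)\geq\tfrac12\dop\|A_h^{-1/2}e_2^n\|^2$, I would reach, pathwise,
\begin{equation*}
\dop\|A_h^{-1/2}e_2^n\|^2+\|\nabla e_2^n\|^2\leq C\,g^n+C\|e_2^n\|_{\dot H^{-1}}^2,\qquad g^n:=\|A^{-1/2}(\phi(u_h(t_n))-\phi(\tilde u_h^n))\|^2,
\end{equation*}
after absorbing a fraction of $\|\nabla e_2^n\|^2$ through $\|e_2^n\|^2\leq\|\nabla e_2^n\|\,\|e_2^n\|_{\dot H^{-1}}$ and Young's inequality, exactly as in Lemma \ref{error_integral}.

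The decisive step is to apply the discrete fractional integral $\dopm$, whose weights $\tau^\alpha a_k^{(-\alpha)}$ are nonnegative and invert $\dop$, giving
\begin{equation*}
\|A_h^{-1/2}e_2^n\|^2+\dopm\|\nabla e_2^n\|^2\leq C\dopm g^n+C\dopm\|e_2^n\|_{\dot H^{-1}}^2 .
\end{equation*}
Here Lemma \ref{t_nANDa_n} supplies the lower bound $\tau^\alpha a_{n-j}^{(-\alpha)}\geq\tau\,t_{n-j}^{\alpha-1}/\Gamma(\alpha)$, which turns $\dopm\|\nabla e_2^n\|^2$ into a minorant of the form $c\,\tau\sum_{j}t_{n-j}^{\alpha-1}\|\nabla e_2^j\|^2$ (the $j=0$ term drops since $e_2^0=0$), while the companion upper estimate $\tau^\alpha a_k^{(-\alpha)}\leq C\tau\,t_k^{\alpha-1}$ bounds the two forcing convolutions by $C\tau\sum_j t_{n-j}^{\alpha-1}g^j$ and $C\tau\sum_j t_{n-j}^{\alpha-1}\|e_2^j\|_{\dot H^{-1}}^2$. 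Using \eqref{Ah} to compare $\|e_2^j\|_{\dot H^{-1}}$ with $\|A_h^{-1/2}e_2^j\|$ and then invoking a discrete fractional Gronwall inequality (cf.\ \cite{MK-2019}) to absorb the last sum, I would obtain the pathwise estimate
\begin{equation*}
\tau\sum_{j=1}^{n}t_{n-j}^{\alpha-1}\|\nabla e_2^j\|^2\leq C\,\tau\sum_{j=1}^{n}t_{n-j}^{\alpha-1}\|A^{-1/2}(\phi(u_h(t_j))-\phi(\tilde u_h^j))\|^2 .
\end{equation*}

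To finish, I would estimate the right-hand side by \eqref{C_3m}, namely $\|A^{-1/2}(\phi(u_h(t_j))-\phi(\tilde u_h^j))\|\leq C\big(1+\|u_h(t_j)\|_{\dot H^1}^2+\|\tilde u_h^j\|_{\dot H^1}^2\big)\|e_1^j\|$. Taking $L^p(\Omega)$-norms of the weighted sum, moving them inside by Minkowski's inequality, and splitting each summand with H\"older's inequality in $\Omega$, the quadratic factor is controlled by the uniform moment bounds \eqref{u_hBound} and \eqref{u^nBound}, while $\|e_1^j\|$ is controlled by Lemma \ref{e_1}. Since $\tau\sum_j t_{n-j}^{\alpha-1}\leq C_T$ and $(h^2+\tau^{\min\{\alpha/2,\mu,\zeta\}})^2\leq C(h^2+\tau^{\min\{\alpha/2,\mu,\zeta\}})$ for $h,\tau<1$, this yields the claimed bound \eqref{intErrorTilde}. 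The main obstacle will be the discrete fractional calculus of the second paragraph: establishing the backward-Euler positivity property and, above all, coupling the \emph{lower} bound of Lemma \ref{t_nANDa_n} (needed to recover the $t_{n-j}^{\alpha-1}$-weighted gradient sum on the left) with the \emph{upper} bound and the discrete Gronwall argument (needed to absorb the $\dot H^{-1}$ forcing on the right), while correctly handling the singular weight at the diagonal index $j=n$.
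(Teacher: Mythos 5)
Your proposal matches the paper's proof essentially step for step: the same error equation tested with $A_h^{-1}e_2^n$, the same splitting of the nonlinearity handled by \eqref{f-tr} and Young's inequality, the positivity property of $\dop$, application of $\dopm$ with a discrete fractional Gronwall absorption of the $\dot H^{-1}$ term, then \eqref{C_3m} together with the moment bounds \eqref{u_hBound}, \eqref{u^nBound} and Lemma \ref{e_1}, and finally Lemma \ref{t_nANDa_n}. The only cosmetic difference is ordering: the paper performs all estimates with the CQ weights $\tau^{\alpha}a_{n-j}^{(-\alpha)}$ (so it never needs an upper bound on the weights, and the diagonal singularity you flag never enters the right-hand side) and invokes Lemma \ref{t_nANDa_n} only once, at the very end, to convert the left-hand side to the kernel weights $\tau\, t_{n-j}^{\alpha-1}$.
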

\begin{proof}
From \eqref{DiscTldu} and \eqref{fully-1}, we see that
\begin{eqnarray}\label{errorTildeD}
 \dop  e^n_2+A_h^2e^n_2=A_hP_h(\phi(u_h(t_n))-\phi(U^{n}_h)),\quad\  e^0_2=0.
 \end{eqnarray}
Taking the inner product with $A^{-1}_he^n_2$, we get
\begin{eqnarray*}
 (\dop A^{-1/2}_h  e^n_2,A^{-1/2}_h e^n_2) +\|\nabla e^n_2\|^2&=&(\phi(u_h(t_n))-\phi(\tilde u_h^n),e^n_2)+(\phi(\tilde u_h^n)-\phi(U^{n}_h),e^n_2)\\
 & \leq &\frac{1}{2}\|\nabla e^n_2\|^2+\frac{1}{2}\|A^{-1/2}( \phi(u_h(t_n))-\phi(\tilde u_h^n)\|^2+\|e^n_2\|^2.
 \end{eqnarray*}
Using the fact $(\dop A^{-1/2}_he^n_2, A^{-1/2}_he^n_2)\geq \frac{1}{2}\dop\|A^{-1/2}_he^n_2\|^2 $ and $\|e^n_2\|^2\leq \|\nabla e^n_2\| \|e^n_2\|_{\dot H^{-1}}$, we obtain
\begin{eqnarray*}
  \frac{1}{2}\dop\|A^{-1/2}e^n_2\|^2 +\|\nabla e^n_2\|^2
  &\leq &\frac{1}{2}\|\nabla e^n_2\|^2+\frac{1}{2}\|A^{-1/2}( \phi(u_h(t_n))-\phi(\tilde u_h^n))\|^2\\
  & &+\frac{1}{3}\|\nabla e^n_2\|^2+\frac{3}{4}\|e^n_2\|_{\dot H^{-1}}^2,
 \end{eqnarray*}
and after simplication,
 $$  \dop\|A^{-1/2}e^n_2\|^2 +\|\nabla e^n_2\|^2
  \leq c\|A^{-1/2}( \phi(u_h(t_n))-\phi(\tilde u_h^n))\|^2+c\|e^n_2\|_{\dot H^{-1}}^2 . $$
Applying the discrete fractional integral operator $\partial_\tau^{-\alpha}$ to both sides, yields
 \begin{eqnarray*}
 \|A^{-1/2}e^n_2\|^2+\tau^{\alpha}\sum_{j=1}^n a_{n-j}^{(-\alpha)}\|\nabla e^j_2\|^2&\leq &c\tau^{\alpha}\sum_{j=1}^n a_{n-j}^{(-\alpha)}\|A^{-1/2}(\phi(u_h(t_j))-\phi(\tilde u_h^j))\|^2,
\end{eqnarray*}
and, using \eqref{C_3m},  
 \begin{eqnarray*}
 \|A^{-1/2}e^n_2\|^2+\tau^{\alpha}\sum_{j=1}^n a_{n-j}^{(-\alpha)}\|\nabla e^j_2\|^2&\leq & C\tau^{\alpha}\sum_{j=1}^n a_{n-j}^{(-\alpha)}\|e^j_1\|^2 \left(1+\|\tilde u_h^j\|^4_{\dot H^1}+\|u_h(t_j)\|_{\dot H^1}^4\right).
\end{eqnarray*}
Now, we have 
\begin{eqnarray*}
\|A^{-1/2}e^n_2\|^2_{L^{2p}(\Omega, H)}&+&\tau^{\alpha}\sum_{j=1}^n a_{n-j}^{(-\alpha)}\|\nabla e^j_2\|^2_{L^{2p}(\Omega, H)}\\
&\leq& C\tau^{\alpha}\sum_{j=1}^n a_{n-j}^{(-\alpha)}\|e^j_1\|^2_{L^{4p}(\Omega, \dot H)} 
\left(1+\|\tilde u_h^j\|^4_{L^{8p}(\Omega,\dot H^1) }+\|u_h(t_j)\|_{L^{8p}(\Omega,\dot H^1 )}^4\right)\\
&\leq& C\tau^{\alpha}\sum_{j=1}^n a_{n-j}^{(-\alpha)}\big(h^2+\tau^{\min\{\alpha/2,\;\mu,\; \zeta\}}\big)\\
&\leq& C h^2+C\tau^{\min\{\alpha/2,\;\mu,\; \zeta\}} .
\end{eqnarray*}
This implies that 
\begin{eqnarray}\label{intErrorTilde1}
\small
\tau^{\alpha}\sum_{j=1}^n a_{n-j}^{(-\alpha)}\|\nabla e^j_2\|^2_{L^{2p}(\Omega, \dot H)}&\leq &C h^2+C\tau^{\min\{\alpha/2,\;\mu,\; \zeta\}}.
\end{eqnarray}
Using Lemma \ref{t_nANDa_n}, we finally obtain
\begin{eqnarray*}
\small
\tau\sum_{j=1}^n t_{n-j}^{\alpha-1}\|\nabla e^j_2\|^2_{L^{2p}(\Omega, \dot H)}&\leq &C h^2+C\tau^{\min\{\alpha/2,\;\mu,\; \zeta\}},
\end{eqnarray*}
which concludes  the proof.
\end{proof}
Now, we are ready to prove an estimate for  $e^n_2$ in $L^{2p}(\Omega;\dot H)$-norm.
\begin{lemma}\label{e_2}
Let $\tilde u_h^n$ and $U_h^n$ be the solutions of \eqref{DiscTldu} and \eqref{fully-1}, respectively, with $u_0\in L^{16p}(\Omega;\dot H^2)$. 
Assume $\|A^{(\beta-2)/2}\|_{{\cal L}_0^2}<\infty$ for some $\beta\in [1,3]$ and $\eta>0$. Then
\begin{equation}\label{fully-1-e2}
\|e^n_2\|_{L^{2p}(\Omega;\dot H)}\leq C h^2+C\tau^{\min\{\alpha/2,\;\mu,\;\zeta\}},
\end{equation}
where $\mu$ and $\zeta$ are given by \eqref{mu} and \eqref{zeta}, respectively.
\end{lemma}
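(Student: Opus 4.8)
The plan is to mirror the semidiscrete argument of Lemma~\ref{error_tilde_u}, with the solution operator $E_{0,h}$ replaced by the discrete operator $R_j$ and the time integral replaced by a time-stepping sum. First I would subtract the expanded representation \eqref{fully-3} of $U_h^n$ from the expanded representation \eqref{DiscTldu-3} of $\tilde u_h^n$. Since both share the same homogeneous part $\bar U_h^n$ and the same stochastic term $\sum_j Q_{n-j}\int P_h\,dW$, these cancel, leaving the purely nonlinear representation
\begin{equation*}
e^n_2 = \tau\sum_{j=0}^{n-1} R_{n-j}A_hP_h\big(\phi(u_h(t_{j+1}))-\phi(U_h^{j+1})\big),
\end{equation*}
consistent with the error equation \eqref{errorTildeD} (the sign is immaterial for the norm estimates below). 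I would then split the nonlinear increment through the intermediate solution, writing $\phi(u_h(t_{j+1}))-\phi(U_h^{j+1}) = \big(\phi(u_h(t_{j+1}))-\phi(\tilde u_h^{j+1})\big) + \big(\phi(\tilde u_h^{j+1})-\phi(U_h^{j+1})\big)$, so that $e^n_2 = J_1 + J_2$, where $J_1$ is measured by $e^{j+1}_1$ and $J_2$ by $e^{j+1}_2$.

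For $J_1$ I would use the strong smoothing of $R_j$. Factoring $A_hP_h = A_h^{3/2}(A_h^{-1/2}P_h)$ and applying Lemma~\ref{PQ} with $s=3$ gives $\|A_h^{3/2}R_{n-j}P_h\|\le c\,t_{n-j+1}^{\alpha/4-1}$, while \eqref{Ah} converts $A_h^{-1/2}P_h$ into the continuous $\dot H^{-1}$-norm $\|A^{-1/2}\cdot\|$. Estimate \eqref{C_3m} then bounds $\|A^{-1/2}(\phi(u_h(t_{j+1}))-\phi(\tilde u_h^{j+1}))\|$ by $\|e^{j+1}_1\|$ times a factor controlled in $L^{8p}(\Omega)$ by \eqref{u_hBound} and \eqref{u^nBound}. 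Inserting the pointwise bound of Lemma~\ref{e_1} for $\|e^{j+1}_1\|$ and noting that $\tau\sum_{j=0}^{n-1}t_{n-j+1}^{\alpha/4-1}\le C$ (since $\alpha/4-1>-1$) yields $\|J_1\|_{L^{2p}(\Omega;\dot H)}\le C(h^2+\tau^{\min\{\alpha/2,\mu,\zeta\}})$ directly.

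For $J_2$ I would exploit the milder smoothing matched to the $\dot H^1$-strength of \eqref{C_3}. Writing $A_hP_h = A_h^{1/2}(A_h^{1/2}P_hP)$, Lemma~\ref{PQ} with $s=1$ gives $\|A_h^{1/2}R_{n-j}P_h\|\le c\,t_{n-j+1}^{3\alpha/4-1}$, and the $\dot H^1$-boundedness of $P_h$ turns $A_h^{1/2}P_hP$ into $\|\nabla P\cdot\|$; estimate \eqref{C_3} then produces $\|\nabla e^{j+1}_2\|$ times the $\dot H^2$-type factor controlled by \eqref{u^nBound}. I would then split the singular kernel as $t_{n-j+1}^{3\alpha/4-1}=t_{n-j+1}^{(\alpha-1)/2}\,t_{n-j+1}^{(\alpha/2-1)/2}$ and apply the discrete Cauchy--Schwarz inequality over $j$: the factor carrying $t_{n-j+1}^{\alpha/2-1}$ together with the moment factors is bounded by a constant (as $\alpha/2-1>-1$ and the moments are finite), while the factor $\big(\tau\sum_{j}t_{n-j+1}^{\alpha-1}\|\nabla e^{j+1}_2\|^2\big)^{1/2}$ is the weighted gradient sum estimated in Lemma~\ref{sum_e_2} (after reindexing $k=j+1$ and comparing the shifted weights $t_{n-j+1}^{\alpha-1}=t_{n-k+2}^{\alpha-1}\le t_{n-k}^{\alpha-1}$). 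Exactly as in the semidiscrete passage from Lemma~\ref{error_integral} to Lemma~\ref{error_tilde_u}, the square root arising in the Cauchy--Schwarz step is absorbed because Lemma~\ref{sum_e_2} supplies the corresponding squared rate, so $\|J_2\|_{L^{2p}(\Omega;\dot H)}\le C(h^2+\tau^{\min\{\alpha/2,\mu,\zeta\}})$. The triangle inequality $\|e^n_2\|\le\|J_1\|+\|J_2\|$ then finishes the proof.

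I expect $J_2$ to be the main obstacle. Its control hinges on the weighted gradient estimate of Lemma~\ref{sum_e_2}, whose kernel $t_{n-j+1}^{\alpha-1}$ is singular at the diagonal $j=n-1$, so care is needed in matching the shifted discrete weights $t_{n-j+1}$ coming from Lemma~\ref{PQ} against the $t_{n-j}$ weights of Lemma~\ref{sum_e_2} and in keeping the H\"older exponents $\mu,\zeta$ aligned throughout. The splitting of the nonlinearity through $\tilde u_h^n$ is what decouples the time-consistency error $e^{j+1}_1$ (handled by the strong $s=3$ smoothing together with Lemma~\ref{e_1}) from the genuinely nonlinear error $e^{j+1}_2$ (handled by the energy-based Lemma~\ref{sum_e_2}); arranging both smoothing exponents so that each term closes at the optimal rate $h^2+\tau^{\min\{\alpha/2,\mu,\zeta\}}$ is the crux of the argument.
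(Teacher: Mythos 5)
Your proposal is correct and takes essentially the same route as the paper's own proof: the same $R_{n-j}$-convolution representation of $e_2^n$ (homogeneous and stochastic parts cancelling), the same splitting of the nonlinear increment through $\tilde u_h^{j+1}$, with your $J_1$ being the paper's term $II$ (factoring $A_hP_h=A_h^{3/2}A_h^{-1/2}P_h$, Lemma \ref{PQ} with $s=3$, \eqref{Ah}, \eqref{C_3m}, and Lemma \ref{e_1}) and your $J_2$ being the paper's term $I$ (Lemma \ref{PQ} with $s=1$, \eqref{C_3}, kernel splitting plus discrete Cauchy--Schwarz, and Lemma \ref{sum_e_2}). The only difference is that you spell out the Cauchy--Schwarz step, the exponent splitting $3\alpha/4-1=\tfrac{\alpha-1}{2}+\tfrac{\alpha/2-1}{2}$, and the index-shift comparison of the weights $t_{n-j+1}^{\alpha-1}$ against those in Lemma \ref{sum_e_2}, details the paper compresses into the remark that \eqref{intErrorTilde} is used in the last step.
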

\begin{proof}
In view of  \eqref{errorTildeD}, 
$\displaystyle
e^n_2=\sum_{j=0}^{n-1}R_{n-j}\int_{t_j}^{t_{j+1}} A_hP_h(\phi(U_h^{j+1})-\phi( u_h(t_{j+1})))\,ds.
$
Then, 
\begin{eqnarray*}
\| e^n_2\|_{L^{2p}(\Omega, H)}& \leq & \sum_{j=0}^{n-1}\int_{t_j}^{t_{j+1}} \|R_{n-j}A_hP_h(\phi(U_h^{j+1})-\phi(\tilde u_h^{j+1}))\|_{L^{2p}(\Omega, H)}\,ds\\
  & & +\sum_{j=0}^{n-1}\int_{t_j}^{t_{j+1}} \|R_{n-j}A_hP_h(\phi(\tilde u_h^{j+1}))-\phi( u_h(t_{j+1})))\|_{L^{2p}(\Omega, H)}\,ds\\
  &=:&I+II.
\end{eqnarray*}
We estimate $I$ using  {\eqref{C_3}}  and \eqref{R_n2} as follows
\begin{eqnarray*}
I &\leq & C\tau  \sum_{j=0}^{n-1}t_{n-j+1}^{3\alpha/4-1}\|\nabla P(\phi(U_h^{j+1})-\phi(\tilde u_h^{j+1}))\|_{L^{2p}(\Omega, \dot H)} \\
 &\leq &C \tau\sum_{j=0}^{n-1}t_{n-j+1}^{3\alpha/4-1} \|\nabla e_2^{j+1}\|_{L^{4p}(\Omega, \dot H)} \left(1+{\|A_h U_h^{j+1} \|^2_{L^{8p}(\Omega,\dot H)}+\|A_h\tilde u_h^{j+1})\|^2_{L^{8p}(\Omega,\dot H ) }}\right)\\
 &\leq & C h^2+C\tau^{\min\{\alpha/2,\;\mu,\; \zeta\}},  
\end{eqnarray*}
where \eqref{intErrorTilde} is used in the last step. Next, we see that 
\begin{eqnarray*}
II & \leq & \tau\sum_{j=0}^{n-1}t_{n-j+1}^{\alpha/4-1}\|A^{-1/2}_hP_h(\phi(\tilde u_h^{j+1}))-\phi( u_h(t_{j+1})))\|_{L^{2p}(\Omega, \dot H)}\\
& \leq & C\tau \sum_{j=0}^{n-1}t_{n-j+1}^{\alpha/4-1}\| e_1^{j+1}\|_{L^{4p} (\Omega,\dot H)}\left(1+\|\tilde u_h^{j+1}\|^2_{L^{8p}(\Omega,\dot H^1)}+\|u_h(t_{j+1})\|^2_{L^{8p}(\Omega,\dot H^1 ) }\right)\\
& \leq & C (h^2+\tau^{\min\{\alpha/2,\;\mu,\; \zeta\}}). 
\end{eqnarray*}
The last inequality follows by using \eqref{fully-1-e1}.
\end{proof}
The main result for the fully-discrete scheme \eqref{fully-1} is now presented.
\begin{theorem}\label{thm:fully-1}
Let $u$ and $u_h^n$ be the solutions of \eqref{main} and \eqref{fully-1}, respectively, with
$u_0\in L^{16p}(\Omega;\dot H^2)$. 
Assume $\|A^{(\beta-2)/2}\|_{{\cal L}_0^2}<\infty$ for some $\beta\in [1,3]$ and $\eta>0$. Then
\begin{equation}\label{fully-1-a}
\|u(t_n)-U_h^n\|_{L^{2p}(\Omega;\dot H)}\leq c\left(h^{\min\{2,\beta\}-r}+
\tau^{\min\{\alpha/2,\mu,\zeta\}}\right),
\end{equation}
where $r$ is given by \eqref{rr} and \eqref{rr1}, and $\mu$ and $\zeta$ are given by \eqref{mu} and \eqref{zeta}, respectively.
\end{theorem}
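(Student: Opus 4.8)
The plan is to assemble the final bound from the three-term splitting already introduced in the text, namely
\[
u(t_n)-U_h^n = \big(u(t_n)-u_h(t_n)\big) + e_1^n + e_2^n,
\]
where $e_1^n = u_h(t_n)-\tilde u_h^n$ and $e_2^n = \tilde u_h^n - U_h^n$. By the triangle inequality in the $L^{2p}(\Omega;\dot H)$-norm, it suffices to bound each of the three contributions separately and then add the resulting estimates.

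First, the purely spatial (semidiscrete) error $\|u(t_n)-u_h(t_n)\|_{L^{2p}(\Omega;\dot H)}$ is controlled by $c\,h^{\min\{2,\beta\}-r}$; this is exactly Theorem \ref{thm:6.51} evaluated at $t=t_n$. Second, the intermediate temporal error satisfies $\|e_1^n\|_{L^{2p}(\Omega;\dot H)}\leq c\big(h^2+\tau^{\min\{\alpha/2,\mu,\zeta\}}\big)$ by Lemma \ref{e_1}, and third, the remaining error obeys the same bound $\|e_2^n\|_{L^{2p}(\Omega;\dot H)}\leq C h^2+C\tau^{\min\{\alpha/2,\mu,\zeta\}}$ by Lemma \ref{e_2}. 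Summing the three and observing that $\min\{2,\beta\}-r\leq 2$ (since $r\geq 0$ and $\min\{2,\beta\}\leq 2$), the two stray $h^2$ terms produced by $e_1^n$ and $e_2^n$ are dominated by $h^{\min\{2,\beta\}-r}$ for $0<h<1$ and can therefore be absorbed into the spatial rate, giving precisely \eqref{fully-1-a}.

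The assembly at this final stage is routine, so I do not expect any genuine obstacle here; the substantive difficulty is entirely upstream, concentrated in the component estimates. Lemma \ref{e_1} rests on the convolution-quadrature consistency bounds of Lemmas \ref{lem:a3} and \ref{lem:a2} (supplemented by Remark \ref{B23} for the smoother range $\beta\in(2,3]$) together with the homogeneous-part estimate \eqref{estimate-0a}, while Lemma \ref{e_2} depends on the weighted discrete energy inequality of Lemma \ref{sum_e_2} and the positivity lower bound $\tau^\alpha a_n^{(-\alpha)}\geq \tau\,t_n^{\alpha-1}/\Gamma(\alpha)$ of Lemma \ref{t_nANDa_n}. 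Once these ingredients are in place, the theorem follows immediately from the triangle inequality.
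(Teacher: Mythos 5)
Your proposal is correct and follows exactly the paper's own argument: the error is split as $u(t_n)-U_h^n=(u(t_n)-u_h(t_n))+e_1^n+e_2^n$ and bounded via the triangle inequality using Theorem \ref{thm:6.51}, Lemma \ref{e_1}, and Lemma \ref{e_2}. Your explicit remark that the stray $h^2$ terms are absorbed into $h^{\min\{2,\beta\}-r}$ (since $r\geq 0$, $\min\{2,\beta\}\leq 2$, and $0<h<1$) is a detail the paper leaves implicit, and it is valid.
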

\begin{proof}  The estimate follows by using the triangle inequality and the bounds obtained in Theorem \ref{thm:6.51} and in Lemmas \ref{e_1} and \ref{e_2}.
\end{proof}

\begin{remark}
For trace class noise, the temporal convergence rate is 
$O\left(\tau^{\min\{\alpha/2,\alpha+\gamma-1/2\}} \right)$ if $\gamma<1/2$ and $O(\tau^{\alpha/2})$ if $\gamma> 1/2$ and $\alpha+\gamma-\frac{1}{2}>1$. In the limiting case $\alpha \to 1^-$ and $\gamma\to 0^+$, an $O(\tau^{1/2-\epsilon})$ convergence rate is obtained, which coincides with that for the classical stochastic Cahn--Hilliard equation \cite{19}.
\end{remark}


\section{Numerical Results}\label{sec:NE}
We present one-dimensional numerical results in the unit interval $\mathcal{D}=(0,1)$ to support our analysis. We consider the stochastic Cahn-Hilliard equation 
\begin{equation} \label{num}
\cop u+\epsilon^2 A^2u+ AP\phi(u)=\partial_t^{-\gamma}\dot W(t),
\end{equation}
where $\epsilon>0$ represents the interface width parameter. We apply the numerical scheme \eqref{fully-2} and examine the theoretical estimates obtained in Theorem \ref{thm:fully-1}. 
\subsection{Implementation of the noise term}
First, we briefly discuss the implementation of the noise $W(t)$. 
We recall the expansion of  $W(t)$ given in \eqref{noise}:
$$ W(t)=\sum_{j=1}^\infty \gamma^{1/2}_j e_j \beta_j(t)  ,$$
with $\beta_j$, $j = 1, 2, . . . ,$ are i.i.d. Brownian motions, and $\gamma_j$ and $e_j$ are
the eigenvalues (arranged in non-decreasing order with counted multiplicity) and the corresponding eigenfunctions of $Q$, respectively. 
 We will consider only the case where the covariance operator $Q$ shares the same eigenfunctions with the operator $A$.
Recall that in one-dimension, the eigenvalues $\lambda_j$ and eigenfunctions $e_j(x)$ of $A$  are given by $j^2\pi^2$ and $\sqrt{2}\cos(j\pi x)$, $j\in\mathbb{N}$, respectively. We let $\gamma_j=j^{-m},\; m\in\mathbb{N}$, and it can be easily seen that 
\begin{equation}\label{m}
\|A^{(\beta-2)/2}\|_{\mathcal{L}_0^2}^2=\text{Tr}(A^{(\beta-2)}Q)\sim\sum_{j\in\mathbb{N}}j^{2(\beta-2)-m}.
\end{equation}
For the implementation of $W(t)$, we first notice the $L^2(\mathcal{D})$-projection $P_hW(t) \in \dot S_h$  given by 
$$ (P_hW(t),\chi)=\sum_{j=1}^\infty \gamma^{1/2}_j \beta_j(t) (e_j,\chi)\approx \sum_{j=1}^L \gamma^{1/2}_j \beta_j(t) (e_j,\chi),\quad \forall \chi \in V_h,$$
where the truncation number $L$ is appropriately selected. Since $\beta_j$ are i.i.d. Brownian motions, the increments $\Delta\beta^k_j$ satisfy 
$$\Delta\beta^k_j=\beta_j(t_k)- \beta_j(t_{k-1})\sim \sqrt{\tau}\mathcal{N}(0,1),\quad k=1,2,...,N,$$
where $\mathcal{N}(0,1)$ denotes the standard normal distribution.
Further, we approximate the term $P_h\dot W(t_k)$ by backward difference  
$$P_h\dot W(t_k)\approx \dfrac{P_hW(t_k)-P_hW(t_{k-1})}{\tau}\qquad \mbox{with } \; P_h \dot W(t_0)=0.$$
Then, applying the convolution quadrature \eqref{QC}, the term $\partial_t^{-\gamma} P_h\dot W(t_n)$ is approximated as follows:  
$$\partial_t^{-\gamma} P_h\dot W(t_n)\approx \tau^{\gamma}\sum_{k=1}^n a_{n-k}^{(-\gamma)}\sum_{j=1}^L \gamma_j^{1/2} e_j \frac{\Delta\beta^k_j}{\tau},$$
where $L$ is chosen to match to the FEM degree of freedom in order to to ensure  the desired convergence \cite{Yan-2005}.  

\subsection{Numerical tests and discussions}
For the numerical tests, we choose a uniform  time step $\tau=T/N$ and divide the the domain $\mathcal{D}$ into $M$ equal subintervals of length $h$. We then consider the stochastic  equation \eqref{num} with the following initial data:
\begin{itemize}
\item[ (a)]  $u_0(x)=0$ with $\epsilon=1$,  
\item[ (b)]  $u_0(x)=0.05\cos(2\pi x)\in \dot H^2(\mathcal{D})$,  $\epsilon=0.1$.   
\end{itemize}
The first case is presented to investigate the contribution of the noise without the effect of initial data and interface width parameter.  To check the convergence rates in space and time, we compute the $L^2(\Omega; \dot H)$-norm of the error $e^n=u(t_n)-U^n_h$ and evaluate the expected value over 100 sample paths.  The number within brackets in the last column of each table denotes the theoretical rate stated in Theorem \ref{thm:fully-1}. The condition $\eta>0$ is imposed in all numerical tests.

We first examine the temporal convergence. Since an exact solution is not available, we compute a reference solution with a much finer temporal mesh with $N = 2560$. 
To investigate the influence of the noise regularity (indicated by $m$) on the convergence rates, we present the numerical results in Tables \ref{table:1}-\ref{table:3} for $m = 0,1,2$, respectively, with  different values of $\gamma$ and $\alpha$. Note that, from \eqref{m}, the borderline for trace class noise is $m = 1$ corresponding to  \textcolor{black}{roughly $ \beta=2$}, while $m = 0$ corresponds a white noise with roughly  $\beta = 3/2$.

By Theorem \ref{thm:fully-1}, the error behaves like $O(\tau^{\min\{\alpha/2,\; \mu\}})$ for $m=0,1$.
The results presented in Tables 1 and 2 show that the empirical rates  closely align with the theoretical ones in case (a), while they are slightly higher than the anticipated rates 
in case (b). The results for the trace class noise with $m=2$  is presented in Table \ref{table:3} with  different values of $\gamma$ and $\alpha$. The results are very close to those in Table \ref{table:2}. Indeed, several numerical experiments showed that
the  noise regularity beyond trace class  affects very little the temporal convergence.

For the spatial convergence, we fix a very small time step size in order to neglect the effect of the temporal error, and compute a reference solution  on a very finer spatial mesh.
We conduct numerical tests with $\gamma =0.6$ in which case the number $r$  in \eqref{rr} and \eqref{rr1} is zero. Then, by Theorem \ref{thm:fully-1}, the error behaves like $O(h^{\min\{2,\beta \}})$.
Table \ref{table:4} displays the computed error for both cases (a) and (b)  with different values of $\alpha$ for $m=0,1,2$. As observed from the table, the computational  convergence rates are
in excellent agreement with the theoretical predictions  for $m=1$ and $m=2$. By contrast, the empirical rates are found to be higher than the theoretical ones when $m = 0$ (white noise) in both cases (a) and (b), indicating an interesting superconvergence phenomenon. This case needs further investigation to identify the cause of positive effect on the convergence rates.

\begin{table}[t]
{\footnotesize
\begin{center} 
\caption{Temporal convergence rates for cases (a) and (b) with $m=0$ for different values of $\alpha$ and  $\gamma$ at $T=0.01$.}
\label{table:1}
\begin{tabular}{|c|c|c|ccccc|c|}
\hline
  $\gamma$ &$\alpha$ & Case$\backslash N$  & 20 & 40 & 80 & 160&320   & Rate\\ 
\hline 
 & 0.5 & (a)  &  3.19e-2 & 2.62e-2 &2.05e-2 &  
1.57e-2 &1.24e-2  &$0.34 $ (0.238)\\
 &  & (b)  &  1.03e-1 &7.46e-2 &5.63e-2 &  
3.89e-2 &2.68e-2 &$0.49 $ (0.238)\\  
 \cline{2-9}
0.3 & 0.75 & (a)  &  2.91e-3 & 2.15e-3 &    
1.57e-3  &1.08e-3 & 7.09e-4 & 0.51  (0.456)\\ 
 &  & (b)  &  1.56e-2 &8.72e-3   &4.93e-3 &  
2.84e-3&1.62e-3  &0.82 (0.456)\\ 
 \hline
 &0.5 & (a)  & 3.27e-3 & 2.34e-3 &  
1.61e-3 &9.88e-4 & 6.62e-4 & 0.58 (0.438)\\
 &  & (b)  & 1.43e-1 &9.93e-2 &5.97e-2 &3.00e-2 &1.10e-2   
 &  0.93(0.438)\\
\cline{2-9}
0.5 &0.75 & (a)  & 5.39e-4 & 3.75e-4 &2.38e-4 &   
1.42e-4   &9.30e-5&0.63 (0.656)\\
 &  & (b)  & 7.87e-3  &4.16e-3 &2.17e-3 &1.08e-3 & 5.30e-4   &0.97 (0.656)\\
\hline
 &0.25 & (a)  &3.02e-4 &  2.28e-4 &  
1.67e-4   &1.12e-4&  7.90e-5&0.48 (0.519)\\
 &  & (b)  & 1.40e-1 & 1.22e-1&9.59e-2&6.46e-2 &3.76e-2 &0.47 (0.519)\\
\cline{2-9}
 0.8 &0.5 & (a)  &8.51e-5 & 6.21e-5 &4.30e-5&   
3.24e-5   & 2.40e-5 &0.46 (0.738)\\
 &  & (b)  &3.41e-2&1.40e-2 &6.42e-3&3.01e-3 &1.39e-3&1.15 (0.738)\\
\cline{2-9}
  &0.75 & (a)  &9.17e-5 &5.30e-5 & 3.07e-5 &1.55e-5 &9.75e-6&0.81(0.956)\\
 &  & (b)  &4.84e-3&2.52e-3 &1.27e-3 &6.20e-4 &2.91e-4 &  
1.01 (0.956)\\  
\hline
\end{tabular}
\end{center}
}
\end{table}


\begin{table}[t]
{\footnotesize
\begin{center} 
\caption{Temporal convergence rates for cases (a) and (b) with $m=1$ for different values of $\alpha$ and  $\gamma$ at $T=0.01$.}
\label{table:2}
\begin{tabular}{|c|c|c|ccccc|c|}
\hline
  $\gamma$ &$\alpha$ & Case$\backslash N$  & 20 & 40 & 80 & 160&320   & Rate\\  
\hline
 & 0.5 & (a)  &  3.14e-2 &2.58e-2 & 2.00e-2 &   
1.52e-2 &1.19e-2 & 0.35 (0.425)\\
 &  & (b)  &8.51e-2 &6.12e-2&4.55e-2 &2.89e-2 &1.99e-2&0.52 (0.425)\\  
 \cline{2-9}
0.3 & 0.75 & (a)  &  2.24e-3  &1.54e-3 &1.08e-3 &  
7.09e-4 &4.50e-4 &0.58 (0.550)\\
 &  & (b)  &1.10e-2 & 5.91e-3 &3.13e-3 &1.64e-3 &8.43e-4&   
0.93 (0.550)\\  
 \hline
 &0.5 & (a)  & 3.24e-3 & 2.30e-3 &1.56e-3&   
9.37e-4 &6.16e-4 &0.60 (0.500)\\
 &  & (b)  &1.28e-1 & 7.15e-2&2.69e-2&1.17e-2 &5.52e-3&   
1.13(0.500)\\
\cline{2-9}
 0.5&0.75 & (a)  & 5.01e-4 & 3.47e-4 &2.20e-4  & 
1.29e-4 & 8.50e-5& 0.64 (0.750)\\
 &  & (b)  &6.14e-3 &3.22e-3&1.66e-3 &8.13e-4 &3.96e-4 &  
0.99 (0.750)\\
\hline
 &0.25 & (a)  &3.02e-4  & 2.28e-4 & 
1.67e-4&1.12e-4 & 7.88e-5 & 0.48 (0.550)\\
 &  & (b)  &1.41e-1 &1.19e-1 &8.93e-2&5.86e-2 &3.29e-2 &  
0.53 (0.550)\\
\cline{2-9}
 0.8 &0.5 & (a)  &8.43e-5 &6.15e-5 &4.25e-5 &  
3.21e-5 &2.39e-5 &0.45 (0.800)\\
 &  & (b)  & 3.28e-2 &1.36e-2 &6.27e-3&2.94e-3 &  
1.36e-3 & 1.15 (0.800)\\
\cline{2-9}
  &0.75 & (a)  &9.15e-5 &5.27e-5 &3.04e-5 &  
1.51e-5 &9.49e-6&0.82 (1.000)\\
 &  & (b)  & 4.73e-3  &2.45e-3 &1.24e-3 & 6.04e-4 &  
2.84e-4& 1.01 (1.000)\\  
\hline
\end{tabular}
\end{center}
}
\end{table}

\begin{table}[t]
{\footnotesize
\begin{center} 
\caption{Temporal convergence rates for cases (a) and (b) with different values of $\alpha$ and  $\gamma$ at $T=0.01,\; m=2$.}
\label{table:3}
\begin{tabular}{|c|c|c|ccccc|c|}
\hline
  $\gamma$ &$\alpha$ & Case$\backslash N$  & 20 & 40 & 80 & 160&320   & Rate\\ 
\hline 
0.3 & 0.5 & (a)  & 3.12e-2&2.55e-2 &  
1.97e-2   &1.49e-2&1.16e-2&0.36 (0.300)\\ 
& & (b)  & 8.47e-2  &6.56e-2 &4.04e-2 & 2.38e-2 &
1.56e-2&0.61 (0.300)\\ 
 \cline{2-9}
 & 0.75 & (a)  & 1.86e-3 & 1.18e-3 &7.84e-4&   
4.94e-4&3.05e-4&    0.65 (0.550)\\
 &  & (b)  & 8.06e-3 &4.26e-3 &2.21e-3  &1.11e-3&   
5.50e-4 &0.97 (0.550)\\
 \hline 
 0.5 &0.5 & (a)  &3.22e-3 &2.28e-3 &1.54e-3 &9.11e-4&5.92e-4&0.61  (0.500)\\
  & & (b)  &8.37e-2 &4.52e-2 &2.20e-2 &1.09e-2 &4.69e-3& 1.04 (0.500)\\
\cline{2-9}
  &0.75 & (a)  &4.83e-4 &3.34e-4 &2.12e-4 &1.24e-4 &  
8.18e-5   &0.64 (0.750)\\
& & (b)  &5.31e-3 & 2.77e-3 &1.42e-3 &6.92e-04 &3.36e-4 & 0.99 (0.750)\\ 
\hline 
 &0.25 & (a)  &3.02e-4  &2.28e-4 &1.66e-4 &1.12e-4 &7.87e-5&0.48 (0.550) \\
& & (b) & 1.44e-1 &1.21e-1 & 8.56e-2  & 5.58e-2 &2.94e-2 & 0.57 (0.550)\\
\cline{2-9}
 0.8 &0.5 & (a)  &8.38e-5 &6.12e-5 &4.23e-5 & 3.19e-5 &2.38e-5  &0.45 (0.800)\\
   & & (b)  &3.22e-2 &1.34e-2 &6.20e-3 &2.91e-3 &  
1.35e-3  & 1.15 (0.800)\\
\cline{2-9}
  &0.75 & (a)  &9.14e-5 &5.26e-5 &3.02e-5 &1.49e-5 &9.36e-6 &0.82 (1.000)\\ 
    & & (b)  &4.68e-3 & 2.43e-3  & 1.23e-3   & 5.99e-4 & 2.81e-4 & 1.01 (1.000)\\ 
\hline
\end{tabular}
\end{center}
}
\end{table}


\begin{table}[t]
{\footnotesize
\begin{center} 
\caption{Spatial convergence rates for cases (a) and (b) with different values of $\alpha$ and $m$ when  $\gamma=0.6$ at  $T=0.01$.}
\label{table:4}
\begin{tabular}{|c|c|c|ccccc|c|}
\hline
  $m$ &$\alpha$ & Case$\backslash  h$  & 1/20 & 1/40 & 1/80 & 1/160&1/320   & Rate\\ 
\hline 
 & 0.3 & (a) &6.12e-5 & 1.53e-5 &3.80e-6 & 9.14e-7&    
2.00e-7 &2.09 (1.50) \\  
  & & (b) & 1.05e-2  & 2.68e-3 & 6.65e-4 & 1.59e-4 & 3.20e-5   & 2.13 (1.50)\\  
\cline{2-9}
 m=0 & 0.5 & (a)  &5.04e-5  & 1.26e-5& 3.11e-6& 7.39e-7& 1.45e-7 &2.15 (1.50) \\ 
& & (b)  & 7.19e-3  &1.83e-3 & 4.57e-4 & 1.11e-4 & 2.50e-5 & 2.06 (1.50)\\ 
 \cline{2-9}
 & 0.75 & (a)  & 3.54e-5 & 8.85e-6 &2.19e-6 &5.19e-7 &1.02e-7 & 2.15 (1.50)\\
 &  & (b)  & 1.28e-3  & 3.26e-4 & 8.14e-5 & 1.98e-5 & 4.37e-6 & 2.07 (1.50)\\
 \hline 
 &0.3 & (a)& 6.02e-5 &1.51e-5 & 3.73e-6 &8.99e-7 &   
1.97e-7 & 2.08 (2.00)\\
 & & (b)& 1.08e-2 & 2.71e-3 & 6.73e-4 & 1.60e-4 & 3.23e-5 & 2.13 (2.00)\\
\cline{2-9}
 m=1 &0.5 & (a)  &4.92e-5 &1.23e-5 &3.04e-6 & 7.21e-7 & 1.41e-7 & 2.15 (2.00)\\
  & & (b)  &7.20e-3 & 1.83e-3 & 4.57e-4  &1.15e-4 &   
2.51e-5 & 2.06 (2.00)\\
\cline{2-9}
  &0.75 & (a)  &3.38e-5  & 8.45e-6 &2.09e-6  & 4.95e-7 & 9.73e-8 & 2.15 (2.00)\\
& & (b)  & 8.60e-4 & 2.17e-4 & 5.42e-5 & 1.34e-5 &3.26e-6 & 2.02 (2.00)\\ 
\hline 
 &0.3 & (a)& 5.97e-5 & 1.49e-5 & 3.70e-6 &8.92e-7&   
1.96e-7 & 2.08 (2.00)\\
 & & (b)&3.18e-2 & 7.32e-3 & 1.56e-3 & 3.50e-4  & 5.95e-5 & 2.31  (2.00)\\
\cline{2-9}
 m=2 &0.5 & (a)  &4.86e-5 & 1.21e-5 & 3.00e-6 & 7.12e-7 & 1.39e-7 & 2.15  (2.00)\\
  & & (b)  & 7.22e-3 & 1.84e-3  & 4.58e-4  &  1.12e-4 & 2.52e-5 & 2.06 (2.00)\\
\cline{2-9}
  &0.75 & (a)  &3.32e-5 & 8.28e-6 & 2.04e-6 & 4.84e-7 & 9.53e-8 & 2.15  (2.00)\\
& & (b)  & 6.88e-4 & 1.72e-4 & 4.32e-5 & 1.09e-5 & 2.84e-6 & 1.97  (2.00)\\ 
\hline 
\end{tabular}
\end{center}
}
\end{table}


\begin{center}
{\bf Appendix A. Proof of Lemma \ref{lem:a3}}
\end{center}

\begin{proof}
To prove the lemma, we proceed as in \cite[Lemma 5.2]{MK-2023}. We first split the integrand as 
\begin{eqnarray*}
E_{0,h}(t_n-t)A_h\phi_h(u_h(t))-R_{n-j}A_h\phi_h(u_h(t_{j+1})))& =&
(E_{0,h}(t_n-t)-E_{0,h}(t_n-t_j))A_h\phi_h(u_h(t))\\
& &+E_{0,h}(t_n-t_j)A_h(\phi_h(u_h(t))-\phi_h(u_h(t_{j+1})))\\
& &+(E_{0,h}(t_n-t_j)-
R_{n-j})A_h\phi_h(u_h(t_{j+1}))\\
 &=: &\sum_{k=1}^3I_k.
\end{eqnarray*}
Then, after integration, 

\begin{equation}
\begin{split}
\sum_{j=0}^{n-1}\int_{t_j}^{t_{j+1}}& \|I_{1}\|_{L^{2p}(\Omega; \dot H)}\;dt \leq   \sum_{j=0}^{n-2}\int_{t_j}^{t_{j+1}}\|I_{1}\|_{L^{2p}(\Omega; H)}\; dt+\int_{t_{n-1}}^{t_{n}}\|I_{1}\|_{L^{2p}(\Omega; \dot H)}\; dt\\
  & \leq \sum_{j=0}^{n-2} \int_{t_j}^{t_{j+1}}\int_{t_j}^{t}\|A_hE_{0,h}'(t_n-s)\phi_h(u_h(t))\|_{L^{2p}(\Omega; \dot H)}\; ds\; dt \\  
  & \;\; +\int_{t_{n-1}}^{t_{n}}\|A_hE_{0,h}(t_n-t)\phi(u_h(t))\|_{L^{2p}(\Omega; \dot H)}\; dt\\
  & \;\; + \int_{t_{n-1}}^{t_{n}}\|A_hE_{0,h}(\tau)P_h\phi(u_h(t))\|_{L^{2p}(\Omega; \dot H)}\; dt\\
  & \leq c\sum_{j=0}^{n-2} \int_{t_j}^{t_{j+1}}\int_{t_j}^{t}(t_n-s)^{\alpha/2-2}\; ds\; dt  +c \int_{t_{n-1}}^{t_{n}}(t_n-t)^{\alpha/2-1}\; dt+c \int_{t_{n-1}}^{t_{n}}\tau^{\alpha/2-1}\; dt\\
   & \leq c\sum_{j=0}^{n-2} \int_{t_j}^{t_{j+1}} \tau (t_n-t)^{\alpha/2-2} \; dt  +c \int_{t_{n-1}}^{t_{n}}(t_n-t)^{\alpha/2-1}\; dt+c \int_{t_{n-1}}^{t_{n}}\tau^{\alpha/2-1}\; dt\\
    & \leq c \int_{0}^{t_{n-1}} \tau (t_n-t)^{\alpha/2-2} \; dt  +c \tau^{\alpha/2}
    \leq  c \tau^{\alpha/2}.
\end{split}   
\end{equation}
Now, using \eqref{Ah} and Theorem \ref{local} with $q=0$, we get 
\begin{eqnarray*}
\sum_{j=0}^{n-1}\int_{t_j}^{t_{j+1}}\|I_2\|_{L^{2p}(\Omega; \dot H)}\; dt &\leq & \sum_{j=0}^{n-1} \int_{t_j}^{t_{j+1}}\|A_h^{3/2}E_{0,h}(t_n-t_j)A_h^{-1/2}(\phi_h(u_h(t))-\phi_h(u_h(t_{j+1})))\|_{L^{2p}(\Omega; \dot H)}\;  dt   \\
   & \leq &c\sum_{j=0}^{n-1}(t_n-t_j)^{\alpha/4-1} \int_{t_j}^{t_{j+1}}(t_{j+1}-t)^{\min\{\alpha/2,\; \theta-1/2\}}\; dt   \\
    & \leq &c\sum_{j=1}^{n}t_j^{\alpha/4-1} \tau^{\min\{\alpha/2,\; \eta+\alpha/4,\;{\alpha+\gamma-1/2}\}+1} 
    \leq c t_n^{\alpha/4} \tau^{\min\{\alpha/2,\; \eta+\alpha/4,\; {\alpha+\gamma-1/2}\}}.
\end{eqnarray*}

For $I_3$, we use the estimate $\|A_h(E_{0,h}(t_n)-R_n)P_h\|\leq c\tau t_{n+1}^{\alpha/2-2}$, see Lemma \ref{PQ},  to get 
 \begin{eqnarray*}
\sum_{j=0}^{n-1}\int_{t_j}^{t_{j+1}}\|I_{3}\|_{L^{2p}(\Omega; H)}\; dt &\leq & \sum_{j=0}^{n-1} \int_{t_j}^{t_{j+1}}\|A_h( E_{0,h}(t_n-t_j)-
R_{n-j}){\phi(u_h(t_{j+1}}))\|_{L^{2p}(\Omega; \dot H)}\; dt   \\
  & \leq &c\sum_{j=0}^{n-1} \int_{t_j}^{t_{j+1}}t_{n-j+1}^{\alpha/2-2}\tau \;dt  
  \leq c\tau^2\sum_{j=0}^{n-1} t_{n-j+1}^{\alpha/2-2} \\
  & = &c\tau^2\sum_{j=1}^{n} t_{j+1}^{\alpha/2-2} 
  \leq c\tau^{\alpha/2}.
\end{eqnarray*} 
Gathering all the previous estimates completes the proof.
\end{proof}
\begin{center}
	{\bf Appendix B. Proof of Lemma \ref{lem:a2}}
\end{center}
		
\begin{proof}
The proof closely follows the one presented \cite[Theorem 4.1]{BYZ-2019}. For the reader's convenience, we will provide the detailed proof. We first bound the left hand side (LHS) in \eqref{mis} as follows
\begin{eqnarray*}
LHS&\leq&\left( \sum_{j=0}^{n-1}\int_{t_j}^{t_{j+1}}\|A^{(2-\beta)/2}( E_{\gamma,h}(t_n-t)- E_{\gamma,h}(t_n-t_j))A^{(\beta-2)/2}\|_{\mathcal{L}_2^0}^2dt\right)^{1/2}\\
&& +\left( \sum_{j=0}^{n-1}\int_{t_j}^{t_{j+1}}\|A^{(2-\beta)/2}(  E_{\gamma,h}(t_n-t_j)-Q_{n-j})A^{(\beta-2)/2}\|_{\mathcal{L}_2^0}^2dt\right)^{1/2}:=I_1^{1/2}+I_2^{1/2}.
\end{eqnarray*}
To estimate $I_1$, we first observe that a straightforward interpolation between $\beta=1$ and $\beta=2$ enables replacing  $A$ by $A_h$, and thus,
\begin{eqnarray*}
I_1&\leq&\sum_{j=0}^{n-2}\int_{t_j}^{t_{j+1}}\|A_h^{(2-\beta)/2}( E_{\gamma,h}(t_n-t)- E_{\gamma,h}(t_n-t_j))A_h^{(\beta-2)/2}\|_{\mathcal{L}_2^0}^2dt\\
&& +\int_{t_{n-1}}^{t_n}\|A_h^{(2-\beta)/2}( E_{\gamma,h}(t_n-t)- E_{\gamma,h}(\tau))A_h^{(\beta-2)/2}\|_{\mathcal{L}_2^0}^2dt:=I_{1,1}+I_{1,2}.
\end{eqnarray*}
To estimate $I_{1,1}$, we apply H\"older inequality and exploit  the smoothing property of $  E_{\gamma,h}'(t)$ so that
\begin{eqnarray*}
I_{1,1}&\leq&\sum_{j=0}^{n-2}\int_{t_j}^{t_{j+1}}\left\|\int_{t_j}^s A_h^{(2-\beta)/2} E_{\gamma,h}'(t_n-t)A_h^{(\beta-2)/2}\,dt\right\|_{\mathcal{L}_2^0}^2ds\\
&\leq&\sum_{j=0}^{n-2} \int_{t_j}^{t_{j+1}}\tau\int_{t_j}^s \|A_h^{(2-\beta)/2}  E_{\gamma,h}'(t_n-t)A_h^{(\beta-2)/2}\|_{\mathcal{L}_2^0}^2\,dtds\\
&\leq&c\tau^2 \|A_h^{(\beta-2)/2}\|_{\mathcal{L}_2^0}^2 \int_{\tau}^{t_n}\|A_h^{(2-\beta)/2} E_{\gamma,h}'(t)\|^2\,dt\leq 
c\tau^2\|A_h^{(\beta-2)/2}\|_{\mathcal{L}_2^0}^2  \int_{\tau}^{t_n}t^{2(\alpha(2+\beta)/4+\gamma-2)}\,dt. \\
\end{eqnarray*}
By noting that $2(\alpha(2+\beta)/4+\gamma-2)=2(\eta+\frac{\alpha}{4}-1)-1$, we readily obtain
$$
I_{1,1}\leq c\left\{\begin{array}{ll} \tau^{2(\eta+\frac{\alpha}{4})}, & \eta+\frac{\alpha}{4} <1,\\\tau^2\ell_n, &\eta+\frac{\alpha}{4}=1,\\\tau^2t_n^{2(\eta+\frac{\alpha}{4}-1)},&\eta+\frac{\alpha}{4}>1,\end{array}\right.
$$
where $\ell_n=\ln(\frac{t_n}{\tau})$. The estimate for $I_{1,2}$ is established by applying the triangle and Lemma \ref{eE}: 
\begin{eqnarray*}
I_{1,2}&\leq&c\int_0^\tau\|A_h^{(2-\beta)/2}  E_{\gamma,h}(t)A_h^{(\beta-2)/4}\|_{\mathcal{L}_2^0}^2 dt+c\int_0^\tau\|A_h^{(2-\beta)/2} E_{\gamma,h}(\tau)A_h^{(\beta-2)/4}\|_{\mathcal{L}_2^0}^2 dt\\
&\leq&c\|A_h^{(\beta-2)/2}\|_{\mathcal{L}_2^0}^2 \int_0^\tau t^{2(\alpha(2+\beta)/4+\gamma-1)}\,dt+c\|A_h^{(\beta-2)/2}\|_{\mathcal{L}_2^0}^2 \tau^{2(\alpha(2+\beta)/4+\gamma-1)+1}\\
&\leq &c\|A_h^{(\beta-2)/2}\|_{\mathcal{L}_2^0}^2\tau^{2(\eta+\alpha/4)}.
\end{eqnarray*}
For the last term $I_2$, we  use Lemma \ref{Q_n} to get
\begin{eqnarray*}
I_2&\leq&c \sum_{j=0}^{n-1}\int_{t_j}^{t_{j+1}}\|A^{(2-\beta)/2}( E_{\gamma,h}(t_n-t_j)-Q_{n-j})A^{(\beta-2)/2}\|_{\mathcal{L}_2^0}^2dt\\
&\leq&c \tau^3 \sum_{j=0}^{n-1}(t_{n}-t_j)^{2(\alpha(2+\beta)/4+\gamma)-2}\leq c\|A_h^{(\beta-2)/2}\|_{\mathcal{L}_2^0}^2
\left\{\begin{array}{ll} \tau^{2(\eta+\alpha/4)}, & \eta +\alpha/4<1,\\\tau^2\ell_n, &\eta+\alpha/4=1,\\ \tau^2t_n^{2(\eta+\alpha/4-1)},&\eta+\alpha/4>1.\end{array}\right.
\end{eqnarray*}
Combining the preceding estimates  completes the proof of the lemma.
\end{proof}

\noindent {\bf Declaration of competing interest}

The authors declare that they have no known competing financial interests or personal relationships that could have
appeared to influence the work reported in this paper.

\noindent {\bf Data availability} 

No data was used for the research described in the article.

\end{document}